\newtheorem{theorem}{Theorem}[section]
\newtheorem{lemma}{Lemma}[section]
\newtheorem{corollary}{Corollary}[section]
\newtheorem{remark}{Remark}[section]
\newtheorem{proposition}{Proposition}[section]
\numberwithin{equation}{section}
\begin{document}

\title[Asymptotic approximations for semilinear problems in thin networks]
{Asymptotic approximations for semilinear parabolic convection-dominated transport problems\\ in thin graph-like networks}
\author[Taras Mel'nyk \& Christian Rohde]{ Taras Mel'nyk$^\flat$ \ \& \ Christian Rohde$^\natural$}
\address{\hskip-12pt  $^\flat$ Department of Mathematical Physics, Faculty of Mathematics and Mechanics\\
Taras Shevchenko National University of Kyiv\\
Volodymyrska str. 64,\ 01601 Kyiv,  \ Ukraine
}
\email{melnyk@imath.kiev.ua}

\address{\hskip-12pt  $^\natural$ Institute of Applied Analysis and Numerical Simulation,
Faculty of Mathematics and Physics, Suttgart University\\
Pfaffenwaldring 57,\ 70569 Suttgart,  \ Germany
}
\email{christian.rohde@mathematik.uni-stuttgart.de }

\begin{abstract} \vskip-10pt
We consider time-dependent  convection-diffusion problems with high P\'eclet number  of  order $\mathcal{O}(\varepsilon^{-1})$
in thin three-dimensional graph-like networks consisting of   cylinders that are  interconnected by small domains (nodes) with diameters of order  $\mathcal{O}(\varepsilon).$
On  the lateral surfaces of the thin cylinders and the boundaries  of the nodes we account for solution-dependent inhomogeneous  Robin  boundary conditions which can render the  associated initial-boundary problem to be nonlinear.  The strength of the inhomogeneity  is controlled by an intensity factor of order ${\mathcal{O}} (\varepsilon^\alpha)$, $\alpha >0$.

The asymptotic behaviour of  the solution is studied as $\varepsilon \to 0,$ i.e., when  the diffusion coefficients are eliminated and the thin three-diemnsional network is shrunk into a  graph. There are three qualitatively different cases in the asymptotic behaviour of the solution depending on the value of the intensity parameter $\alpha:$ $\alpha =1,$ $\alpha > 1,$ and $\alpha \in (0, 1).$
We construct the asymptotic approximation of the solution, which provides us with the  hyperbolic limit model for $\varepsilon \to 0$ for the first two cases, and prove the corresponding uniform pointwise estimates and energy estimates. As the main result, we derive  uniform pointwise estimates for the  difference between the solutions of the convection-diffusion problem and
the zero-order approximation that includes the solution of the corresponding hyperbolic limit problem.
\end{abstract}

\keywords{Asymptotic approximation, convection-diffusion problem, asymptotic estimate, thin graph-like network, hyperbolic limit model
\\
\hspace*{9pt} {\it MOS subject classification:} \   35K20,  35R02,  35B40, 35B25, 	35B45, 35K57, 35Q49
}

\maketitle
\tableofcontents
% ----------------------------------------------------------------

\section{Introduction}\label{Sect1}
Parabolic (non-steady-state) convection-diffusion equations  arise as basic mathematical   models for  various  transport processes including  mass transfer  and heat transfer   through possibly inhomogeneous or porous media.  We are interested in regimes  with a high P\'eclet number (the ratio of  convective to  diffusive transport rates)  which is controlled by  a small parameter $\varepsilon>0$ scaling the diffusion operator in the rescaled differential equation.  The limit $\varepsilon \to 0 $  leads to a hyperbolic limit model, i.e.,  a change of type of the mathematical model appears. This singular behaviour complicates the study, especially the derivation of robust  a priori estimates  that allow estimating the difference between the solutions of the parabolic approximation and the hyperbolic-limit solution.
We  have overcome these difficulties for the transport dynamics on networks in our paper \cite{Mel-Roh_preprint-2022}, where a linear parabolic convection-dominated  problem was studied in a thin graph-like domain.

Here we generalize the approach to a selected nonlinear setting.  Heat and mass transfer processes in  biochemistry and medicine indicate the need for a broad study of various types  of initial-boundary value problems involving nonlinear boundary conditions, e.g., \cite{Conca,Mav_2023,Mel_IJB-2019,Murray-2,Pao}, especially the paper \cite{Mav_2023},
where many applied problems with nonlinear boundary conditions are described in detail.
Thus, denoting  the unknown concentration of some transported species by $u_\varepsilon$, we study  in the present work the asymptotic behavior (as $\varepsilon \to 0)$ of  $u_\varepsilon$  that satisfies a linear convection-diffusion equation  subject to the influence of semilinear boundary conditions
$$
\varepsilon  \partial_{\boldsymbol{\nu}_\varepsilon} u_\varepsilon  -  u_\varepsilon \, \overrightarrow{V_\varepsilon}(x,t)\boldsymbol{\cdot}\boldsymbol{\nu}_\varepsilon   =  \varepsilon^{\alpha} \varphi^{(i)}_\varepsilon\big(u_\varepsilon,x,t\big).
$$
The boundary conditions are imposed on  both the lateral surfaces of the thin cylinders and the boundary of nodes, which together
form  a thin three-dimensional graph-like network (see Fig.~\ref{f1}).
$\overrightarrow{V_\varepsilon}$ is a given convective vector field.  To quantify  the intensity of processes at the boundary governed by the functions $\{\varphi^{(i)}\}_{i=1}^M,$
 an intensity factor $\varepsilon^\alpha,$ where $\alpha$ is a positive parameter, is introduced.
\begin{figure}[htbp]
\centering
\includegraphics[width=8cm]{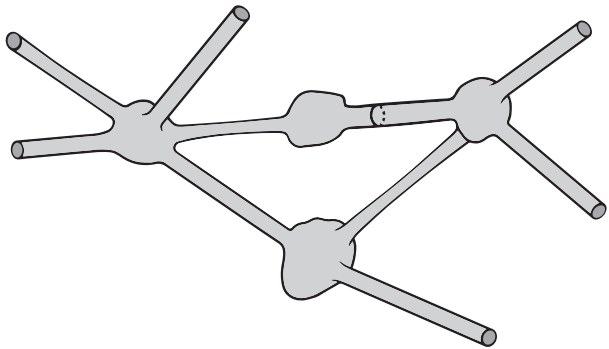}
\caption{Network of thin cylinders connected by  nodes of arbitrary geometry.}\label{f1}
\end{figure}

Network domains appear in varied engineering structures, as well as natural objects, e.g., cracked subsurface systems, plant roots or neural networks. Therefore, investigations of different processes in thin networks are  important  for numerous fields of natural sciences.   At present, special interest of researchers is focused on various effects observed in vicinities of local irregularities of the geometry. For example, aneurysms and stenosis in thin pipe junctions  (see \cite{Bor,Mardal}); another related scenario occurs for thin  debris-filled wellbores or when  modelling effective surface roughness in any kind of tubes and channels with varying aperture  \cite{BurbullaHoerlRohde22,BurbullaRohde22}.

As noted in \cite{Mardal} and \cite{Stynes-2018,Joh_Kno_Nov_2018}, numerical simulations of both boundary-value problems in thin pipe junctions and convection-dominated problems do not provide an acceptable level of accuracy. Therefore, asymptotic methods are
important to derive models especially in thin graph-like networks that ensure accuracy while saving computational resources. Reviews of different asymptotic methods  for various problems are given in \cite{Mel_Klev_M2AS-2018}.

\smallskip

The main goal of this paper is to develop an asymptotic approach that provides efficient and reliable modeling
for parabolic semilinear convection-dominated transport problems in thin $3D$ networks.
In particular, we are interested in finding the corresponding limit problem on a graph to which a thin network is shrinking as $\varepsilon $ tends to zero and the diffusion coefficients disappear.
As a result, one expects  that the limit concentration satisfies  first-order hyperbolic differential equations on the edges and some gluing conditions at the vertexes of this graph. It should be also expected that the limit problem will somehow depend on the parameter~$\alpha,$ and we need to find this dependence, as well as prove the correctness of this limit problem.

Since $1D$ transport modeling on graphs provides an incomplete understanding of multidimensional real processes \cite{Gar_Piccoli_2009,Mel-Roh_preprint-2022}, especially at the graph's vertices,  our approach
relies on an asymptotic mixed-dimensional approximation of  the solutions.
Three qualitatively different cases for the  asymptotic behaviour of the solutions are found,  depending on the value of the parameter $\alpha:$ $\alpha =1,$ $\alpha > 1,$ and $\alpha \in (0,1)$.
After a detailed description of the assumptions and the problem statement in Section~\ref{Sec:Statement}, we  present
the construction of the mixed-dimensional approximation for the parameter
$\alpha=1$  in Section \ref{Sec:expansions}, see in particular  \S~\ref{Sec:justification}.
This approximation depends on the ratio of cylinders which act either as inlet or outlet for the flow.
The leading coefficients in the approximations provide the limit of the sequence of solutions $\{u_\varepsilon\}_{\varepsilon >0}$ of the three-dimensional problem if  the  parameter $\varepsilon$ tends to $0$. The approximation results  are proven in \S~\ref{A priori estimates} (see Theorem \ref{Th_1}, Corollary \ref{corol_1} on the maximum norm of the error  and Theorem \ref{apriory_estimate} on first derivatives of the error).  The  asymptotic behaviour of the solutions  for $\alpha >1$ is then studied
 in Section~\ref{Sec:expansions+}.
In the conclusions,  the obtained results are analyzed and a
perspective for the third case $\alpha \in (0, 1)$  is given.

%%%%%%%%%%%%%%%%%%%
\section{Problem statement for the three-dimensional case}\label{Sec:Statement}
In this section we first describe the geometry of the network.  To understand the core of the problem it is sufficient to consider the case of a thin network with $M$ finite cylinders that are connected by a single node.  Such a simple network will be called a thin graph-like junction. In \S~\ref{explanation}, we show how to apply the results obtained to  a general network.
After a description of the convective flux which determines inlet and outlet cylinders, we present
the complete initial-boundary value problem on the thin graph-like junction  in \S~\ref{sec_ibvp}.

\subsection{Domain description}
In the three-dimensional Euclidean space $\mathbb R^3$ we consider the unit vectors
$\mathbf{e}^{(i)} = \big(e_1^{(i)},\,e_2^{(i)},\,e_3^{(i)}\big),$ $i\in \{1,\ldots,M\},$ where $M\in\mathbb{N}, \, M\ge2.$
For each index $i\in \{1,\ldots,M\},$ we choose a unit  vector
$\boldsymbol{\varsigma}^{(i)} = \big(\varsigma_1^{(i)},\,\varsigma_2^{(i)},\,\varsigma_3^{(i)}\big)$ that is
orthogonal to $\bf{e}^{(i)}$ and then consider their vector product
$$
\boldsymbol{\zeta}^{(i)} = \mathbf{e}^{(i)} \times  \boldsymbol{\varsigma}^{(i)} = \big(e_2^{(i)}\varsigma_3^{(i)}-e_3^{(i)}\varsigma_2^{(i)}, \
    e_3^{(i)}\varsigma_1^{(i)}-e_1^{(i)}\varsigma_3^{(i)}, \
    e_1^{(i)}\varsigma_2^{(i)}-e_2^{(i)}\varsigma_1^{(i)} \big).
$$
Thus,  every point $x=(x_1, x_2, x_3) \in \mathbb{R}^3$ has coordinates
$y^{(i)} = \big(y_1^{(i)}, y_2^{(i)}, y_3^{(i)}\big)$ in a new
right-handed coordinate system $\big(\mathbf{e}^{(i)},  \boldsymbol{\varsigma}^{(i)}, \boldsymbol{\zeta}^{(i)}\big),$
which are connected with the initial one by relation
$y^{(i)} = \mathbb{A}_i  x$ (here and below, we usually mean a column vector in the  product of a matrix and a vector and its result),  where the matrix  $\mathbb{A}_i$ is given by
$$ \mathbb{A}_i =
\begin{pmatrix}
    e_1^{(i)} &
    e_2^{(i)} &
    e_3^{(i)} \\[3pt]
    \varsigma_1^{(i)} &
    \varsigma_2^{(i)} &
    \varsigma_3^{(i)} \\[3pt]
    e_2^{(i)}\varsigma_3^{(i)}-e_3^{(i)}\varsigma_2^{(i)} &
    e_3^{(i)}\varsigma_1^{(i)}-e_1^{(i)}\varsigma_3^{(i)} &
    e_1^{(i)}\varsigma_2^{(i)}-e_2^{(i)}\varsigma_1^{(i)}
\end{pmatrix}.
$$

It is easy to verify that
$\mathbb{A}_i^{\,-1} = \mathbb{A}_i^\mathrm{\,T}$ $(T$ is the sign of transposition) and $\det\mathbb{A}_i = 1$ hold.
%It is known that
The main differential operators (Laplacian, divergence, gradient) are invariant under all Euclidean transformations (rotations and translations). Hence,
\begin{equation}\label{lap_1}
      \Delta_x u(x) := \frac{\partial^2 u}{\partial x_1^2}
  + \frac{\partial^2u}{\partial x_2^2}
  + \frac{\partial^2u}{\partial x_3^2}
  = \frac{\partial^2u^{(i)}}{\partial (y_1^{(i)})^2}
  + \frac{\partial^2u^{(i)}}{\partial (y_2^{(i)})^2}
  + \frac{\partial^2u^{(i)}}{\partial (y_3^{(i)})^2}
  =: \Delta_{y^{(i)}}u^{(i)}(y^{(i)}),
\end{equation}
\begin{equation}\label{div_1}
\mathrm{div}_x \vec{\mathcal{V}}(x):= \frac{\partial v_1}{\partial x_1} + \frac{\partial v_2}{\partial x_2} + \frac{\partial v_3}{\partial x_3}
= \frac{\partial v^{(i)}_1}{\partial y_1^{(i)}} + \frac{\partial v^{(i)}_2}{\partial y_2^{(i)}} + \frac{\partial v^{(i)}_3}{\partial y_3^{(i)}} =:
\mathrm{div}_{y^{(i)}} \vec{\mathcal{V}}^{(i)}({y^{(i)}}),
\end{equation}
\begin{align} \label{grad_1}
  \big(\boldsymbol{\eta},\,\nabla_x u(x)\big) = & \ \eta_1\frac{\partial u}{\partial x_1}
  + \eta_2\frac{\partial u}{\partial x_2}
  + \eta_3\frac{\partial u}{\partial x_3} \notag
  \\
  = & \  \eta_1^{(i)}\frac{\partial u^{(i)}}{\partial y_1^{(i)}}
  + \eta_2^{(i)}\frac{\partial u^{(i)}}{\partial y_2^{(i)}}
  + \eta_3^{(i)}\frac{\partial u^{(i)}}{\partial y_3^{(i)}}
  = \big(\boldsymbol{\eta}^{(i)},\,\nabla_{y^{(i)}} u^{(i)}(y^{(i)})\big),
\end{align}
where $\boldsymbol{\eta}^{(i)}= \big(\eta_1^{(i)},\,\eta_2^{(i)},\,\eta_3^{(i)}\big) =
\mathbb{A}_i \, \boldsymbol{\eta},$  $\boldsymbol{\eta}=(\eta_1,\,\eta_2,\,\eta_3),$
$u^{(i)} \big(y^{(i)}\big) = u\big(\mathbb{A}_i^{-1}y^{(i)}\big),$ and
$\vec{\mathcal{V}}^{(i)}\big(y^{(i)}\big) = \big(v_1^{(i)}(y^{(i)}),\, v_2^{(i)}(y^{(i)}),\, v_3^{(i)}(y^{(i)})\big) = \mathbb{A}_i \vec{\mathcal{V}}\big(\mathbb{A}_i^{-1}y^{(i)}\big).$

\begin{remark}
In what follows, we will denote a domain $Q$ in the new coordinates by the same symbol $Q$, and we will use both new and old coordinates to describe it.
\end{remark}
In each coordinate system $\big(\mathbf{e}^{(i)},  \boldsymbol{\varsigma}^{(i)}, \boldsymbol{\zeta}^{(i)}\big),$ we define the thin cylinder
$$
\Omega_\varepsilon^{(i)} =
    \big\{   y^{(i)} \in\mathbb{R}^3 \colon \
    \varepsilon \ell_0<y_1^{(i)}<\ell_i, \
    \big(y_2^{(i)}\big)^2 + \big(y_3^{(i)}\big)^2 < \varepsilon^2 h_i^2\big\},
$$
 where $\varepsilon$ is a small positive parameter,  $h_i$ is a positive constant, and  $\ell_0\in(0, \frac13), \ \ell_i\geq1$ are  given numbers.
 We denote its  lateral surface  by
$$
{\Gamma_\varepsilon^{(i)}} := \partial\Omega_\varepsilon^{(i)} \cap \{ y^{(i)}\colon   \varepsilon \ell_0<y_1^{(i)} <\ell_i \}
$$
and by
$$
\Upsilon_\varepsilon^{(i)}(\mu) := \Omega_\varepsilon^{(i)} \cap
\big\{  y^{(i)}\colon  \ y_1^{(i)} = \mu \big\}
$$
its cross-section at  $\mu \in [ \varepsilon \ell_0, \ell_i].$
\begin{figure}[htbp]
\begin{center}
\includegraphics[width=5cm]{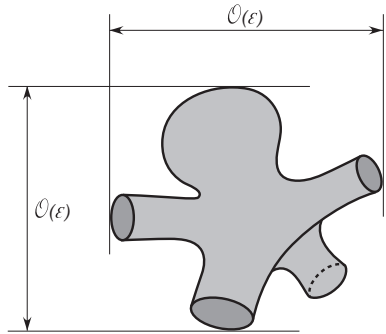}
\caption{The node $\Omega_\varepsilon^{(0)}$.}\label{f2}
\end{center}
\end{figure}

The thin cylinders are joined through a domain $\Omega_\varepsilon^{(0)}$ (referred to  as the "node",  see Fig.~\ref{f2}) that is formed by the homothetic transformation with coefficient $\varepsilon$ from a bounded domain $\Xi^{(0)}$ containing the origin,  i.e.,
$
\Omega_\varepsilon^{(0)} = \varepsilon\, \Xi^{(0)}.
$
In addition, we assume that the boundary $\partial  \Xi^{(0)}$ of $\Xi^{(0)}=\Omega_1^{(0)}$ contains the disks
$\Upsilon^{(i)}_1(\ell_0) := \overline{\Xi^{(0)}} \cap
\big\{ x\colon y_1^{(i)}= \ell_0\big\}, \   i\in\{1,\ldots,M\},$ that are the bases of some right cylinders, respectively, and the lateral surfaces of these cylinders belong to $\partial  \Xi^{(0)}.$ Thus, the boundary of the node $\Omega_\varepsilon^{(0)}$ consists of
the disks $\Upsilon_\varepsilon^{(i)} (\varepsilon\ell_0), \   i\in\{1,\ldots,M\},$
and the surface
$$
\Gamma_\varepsilon^{(0)} :=
\partial\Omega_\varepsilon^{(0)} \backslash
\left\{\overline{\Upsilon_\varepsilon^{(i)}(\varepsilon \ell_0)}\right\}_{i=1}^M .
$$

Hence,  the model thin graph-like junction  $\Omega_\varepsilon$  (see Fig.~\ref{f3})
is   the interior of the union
$\bigcup_{i=0}^{M}\overline{\Omega_\varepsilon^{(i)}}.$
We also assume that  the thin cylinders $\{\Omega_\varepsilon^{(i)}\}_{i=1}^M$  do not intersect, and that the surface $\partial\Omega_\varepsilon\setminus \bigcup_{i=1}^{M}\overline{\Upsilon_\varepsilon^{(i)} (\ell_i)}$ is smooth of class $C^3.$
\begin{figure}[htbp]
\begin{center}
\includegraphics[width=8cm]{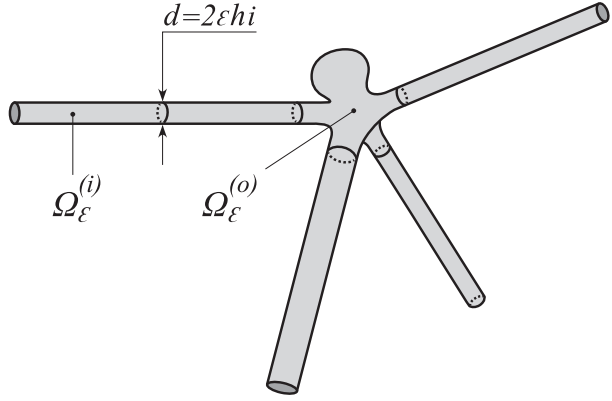}
\caption{Thin graph-like junction $\Omega_\varepsilon$ with cylinders $\Omega^{(1)}_\varepsilon, \ldots,  \Omega^{(M)}_\varepsilon$  and node $\Omega^{(0)}_\varepsilon$.}\label{f3}
\end{center}
\end{figure}
%%%%%%%%%%%%

\subsection{Description of the convective vector field in the cylinders and the node}\label{subsec_V}

All components of the vector-valued function $\overrightarrow{V_1}$ $(\varepsilon =1)$
belong   to the space $C^{3}\big(\overline{\mho}_1 \times [0,T]\big),$
where   $\mho_1$ is a domain that includes the junction $\Omega_\varepsilon$ for all $\varepsilon \in (0, 1].$
The structure of $\overrightarrow{V_\varepsilon}$ depends on  parts of the thin junction $\Omega_\varepsilon,$ namely,
$$
\overrightarrow{V_\varepsilon}(x,t)=
\left(v^{(0)}_1\big(\tfrac{x}{\varepsilon},t\big),  \  v^{(0)}_2\big(\tfrac{x}{\varepsilon},t), \  v^{(0)}_3(\tfrac{x}{\varepsilon},t\big)
\right) =: \overrightarrow{V_\varepsilon}^{(0)}(x,t), \qquad (x,t) \in \ \Omega_\varepsilon^{(0)}\times [0, T],
$$
and for each $ i\in \{1,\ldots,M\}$ and all $(y^{(i)},t) \in \Omega_\varepsilon^{(i)}\times [0,T]$
$$
\overrightarrow{{V}_\varepsilon}^{(i)}(y^{(i)},t) :=
\mathbb{A}_i \overrightarrow{V_\varepsilon}(x,t)\big|_{x= \mathbb{A}_i^{-1} y^{(i)}} =
\left({v}^{(i)}_1\big(y_1^{(i)},t\big), \ \varepsilon\, \overline{V}_\varepsilon^{(i)}(y^{(i)},t)\right),
$$
 where
 \begin{equation}\label{V_i}
  \overline{V}_\varepsilon^{(i)}(y^{(i)},t)  := \Big( {v}^{(i)}_2\big(y_1^{(i)}, \tfrac{\overline{y}_1^{(i)}}{\varepsilon},t\big), \
        {v}^{(i)}_3\big(y_1^{(i)}, \tfrac{\overline{y}_1^{(i)}}{\varepsilon},t\big) \Big)
 \end{equation}
and $\overline{y}_1^{(i)} = \big(y_2^{(i)},\, y_3^{(i)}\big)$. The main direction of the vector function $\overrightarrow{V_\varepsilon}^{(i)}$ is oriented  along the axis of the thin cylinder $\Omega_\varepsilon^{(i)}.$

In addition, for each $i\in\{1,\ldots,M\}$ the function ${v}^{(i)}_1\big(y_1^{(i)},t\big), \ (y_1^{(i)},t)\in [0, \ell_i]\times [0,T],$  is equal to a function $\mathrm{v}_i(t), \ t\in [0,T],$ in a narrow strip  $[0, \delta_i]\times [0,T]$ $(\delta_i < \ell_i)$
and the  components of $\overline{V_\varepsilon}^{(i)}$
have compact supports with respect to the  longitudinal  variable $y_1^{(i)},$ in particular, we will assume that they vanish in  $[0, \delta_i]\times [0,T].$
This means that $\overrightarrow{{V}_\varepsilon}^{(i)} = \mathrm{v}_i\, \mathbf{e}^{(i)}$ in the coordinates $x$ in a neighbourhood of the base $\Upsilon_\varepsilon^{(i)}(\varepsilon \ell_0)$ of the cylinder $\Omega_\varepsilon^{(i)}$ for $\varepsilon$ small enough. Thanks to the smoothness of $\overrightarrow{V_\varepsilon},$
\begin{equation}\label{V_1}
  \overrightarrow{V_\varepsilon}^{(0)}(x,t)\big|_{x \, \in \, \Upsilon_\varepsilon^{(i)}(\varepsilon \ell_0)} = \mathrm{v}_i(t)\, \mathbf{e}^{(i)},
  \quad t \in [0, T].
\end{equation}
We also suppose that for all $y_1^{(i)} \in [0, \ell_i]$ and $t \in [0,T]$
\begin{equation}\label{V_2}
{v}^{(i)}_1 > 0 \quad\text{for} \ \ i\in \{1,\ldots,m\}, \qquad {v}^{(i)}_1 < 0 \quad\text{for} \ \ i\in \{m+1,\ldots,M\},
\end{equation}
where $m\in \Bbb N$ and $m< M,$ i.e., we have $m$ inlet and $M-m$ outlet cylinders relative to the vector field~$\overrightarrow{V_\varepsilon}.$

As for the vector field at the node, we will make the following additional assumption. It  is conservative  in the node and
its potential $p$ is a solution to the boundary value problem
\begin{equation}\label{potential0}
\left\{\begin{array}{rcll}
    \Delta_\xi p(\xi,t)  & = & 0, & \quad
    \xi= (\xi_1, \xi_2, \xi_3) \in \Xi^{(0)},
\\[2pt]
\partial_{\boldsymbol{\nu}_\xi} p(\xi,t)   &=&  \mathrm{v}_i(t), & \quad
   \xi \in \Upsilon^{(i)}_1(\ell_0), \quad i\in \{1,\ldots, M\},
\\[2pt]
\partial_{\boldsymbol{\nu}_\xi} p(\xi,t)  &=&  0, & \quad
   \xi \in \Gamma^{(0)} := \partial{\Xi^{(0)}} \setminus \Big( \bigcup_{i=1}^M \Upsilon^{(i)}_1(\ell_0)\Big),
\end{array}\right.
\end{equation}
where $\Delta_\xi$ is  the Laplace operator,  $\partial_{\boldsymbol{\nu}_\xi}$ is the derivative along the  outward unit normal $\boldsymbol{\nu}_\xi$ to $\partial \Xi^{(0)},$  and the variable $t\in [0,T]$ is treated as a parameter.
The Neumann problem \eqref{potential0} has a  solution if and only if
the  condition
\begin{equation}\label{cond_1}
  \sum_{i=1}^{M} h_i^2 \, \mathrm{v}_i(t) = 0
\end{equation}
holds for each $t\in [0, T].$  To ensure  unique solvability of \eqref{potential0}  we  enforce the condition $\int_{\Xi^{(0)}} p\, d\xi =0.$
Thus,
\begin{equation}\label{field_0}
  \overrightarrow{V_\varepsilon}^{(0)}(x,t)  = \nabla_{\xi} p(\xi,t)\big|_{\xi = \frac{x}{\varepsilon}} = \varepsilon \nabla_x \big( p(\tfrac{x}{\varepsilon},t) \big), \quad x\in \Omega^{(0)}_\varepsilon,
\end{equation}
and, clearly,  $\mathrm{div}_x \overrightarrow{V_\varepsilon}^{(0)} = 0$ in $\Omega^{(0)}_\varepsilon \times [0,T],$ i.e.,
 $\overrightarrow{V_\varepsilon}^{(0)}$  is incompressible in $\Omega^{(0)}_\varepsilon $ for any $t\in [0, T],$
 there is no flow of $\overrightarrow{V_\varepsilon}^{(0)}$  through the surface $\Gamma^{(0)}_\varepsilon,$
 and the amount of the flow entering the node $\Omega^{(0)}_\varepsilon$ is equal to the amount of the flow outgoing it.

%%%%%%%%%%%%%%%%%%%%%
\subsection{The initial-boundary value problem\label{sec_ibvp}}
In $\Omega_\varepsilon,$ we consider the following semilinear parabolic convection-diffusion  problem:
\begin{equation}\label{probl}
\left\{\begin{array}{rcll}
 \partial_t u_\varepsilon -  \varepsilon\, \Delta_x u_\varepsilon +
  \mathrm{div}_x \big( \overrightarrow{V_\varepsilon} \, u_\varepsilon\big)
  & = & 0, &
    \text{in} \ \Omega_\varepsilon \times (0, T),
\\[2mm]
-  \varepsilon \,  \partial_{\boldsymbol{\nu}_\varepsilon} u_\varepsilon +  u_\varepsilon \, \overrightarrow{V_\varepsilon}\boldsymbol{\cdot}\boldsymbol{\nu}_\varepsilon &  = & \varepsilon^{\alpha} \varphi^{(0)}_\varepsilon\big(u_\varepsilon,x,t\big) &
   \text{on} \ \Gamma^{(0)}_\varepsilon \times (0, T),
\\[2mm]
-  \varepsilon \,  \partial_{\boldsymbol{\nu}_\varepsilon} u_\varepsilon +  u_\varepsilon \, \overrightarrow{V_\varepsilon}\boldsymbol{\cdot}\boldsymbol{\nu}_\varepsilon &  = & \varepsilon^{\alpha}  \varphi^{(i)}_\varepsilon\big(u_\varepsilon,x,t\big) &
   \text{on} \ \Gamma^{(i)}_\varepsilon \times (0, T),
\\[2mm]
 u_\varepsilon \big|_{x \in \Upsilon_{\varepsilon}^{(i)} (\ell_i)}
 & = & q_i(t), &  t\in (0, T)
\\[2mm]
u_\varepsilon\big|_{t=0}&=& 0, & \text{in} \ \Omega_{\varepsilon}.
 \end{array}\right.
\end{equation}
In \eqref{probl}, we denote  by ${\boldsymbol{\nu}}_\varepsilon$  the outward unit normal to $\partial \Omega_\varepsilon,$ and
$\partial_{\boldsymbol{\nu}_\varepsilon}$ denotes  the derivative along ${\boldsymbol{\nu}}_\varepsilon$.
The  intensity parameter $\alpha$  is from $ (0, +\infty)$ and $i \in \{1,\ldots,M\}.$

Taking into account \eqref{lap_1}, \eqref{div_1}, \eqref{grad_1}, and the assumptions made above, we can rewrite this problem as follows
\begin{equation}\label{probl_rewrite}
\left\{\begin{array}{rcll}
 \partial_t u_\varepsilon  +
    \overrightarrow{V_\varepsilon}^{(0)} \boldsymbol{\cdot} \nabla_x u_\varepsilon
  & = & \varepsilon\, \Delta_x u_\varepsilon, &
    \text{in} \ \Omega^{(0)}_\varepsilon \times (0, T),
\\[2mm]
-   \partial_{\boldsymbol{\nu}_\varepsilon} u_\varepsilon &  = & \varepsilon^{\alpha-1} \varphi^{(0)}_\varepsilon\big(u_\varepsilon,x,t\big) &
   \text{on} \ \Gamma^{(0)}_\varepsilon \times (0, T),
\\[2mm]
 \partial_t u^{(i)}_\varepsilon  +
  \mathrm{div}_{y^{(i)}} \big( \overrightarrow{V_\varepsilon}^{(i)} \, u^{(i)}_\varepsilon\big)
  & = &  \varepsilon\, \Delta_{y^{(i)}} u^{(i)}_\varepsilon, &
    \text{in} \ \Omega^{(i)}_\varepsilon \times (0, T),
\\[2mm]
-   \,  \partial_{\overline{\nu}_\varepsilon} u_\varepsilon +  u_\varepsilon \, \overline{V}_\varepsilon^{(i)} \boldsymbol{\cdot}\overline{\nu}_\varepsilon &  = & \varepsilon^{\alpha-1}\,  \varphi^{(i)}_\varepsilon\big(u^{(i)}_\varepsilon, y^{(i)},t\big) &
   \text{on} \ \Gamma^{(i)}_\varepsilon \times (0, T),
\\[2mm]
 u^{(i)}_\varepsilon \big|_{y^{(i)}_1= \ell_i}
 & = & q_i(t),  & t \in  (0, T),
\\[2mm]
u_\varepsilon\big|_{t=0}&=& 0, & \text{in} \ \Omega_{\varepsilon},
 \end{array}\right.
\end{equation}
where  $\varphi^{(0)}_\varepsilon(s,x,t) := \varphi^{(0)}\big(s, \tfrac{x}{\varepsilon},t\big)$
and $ \varphi^{(i)}_\varepsilon(s,y^{(i)},t) := \varphi^{(i)}\big(s, y_1^{(i)}, \tfrac{\overline{y}_1^{(i)}}{\varepsilon}, t\big),$
 \ $\overline{\nu}_\varepsilon = \big(\nu_2(\tfrac{\overline{y}_1^{(i)}}{\varepsilon}),  \nu_3(\tfrac{\overline{y}_1^{(i)}}{\varepsilon})\big)$
is the outward unit normal to the lateral surface $\partial \Omega^{(i)}_\varepsilon,$ and $i \in \{1,\ldots,M\}.$
\medskip

For the given functions on  the right-hand side of \eqref{probl_rewrite}, we suppose the following.
\begin{description}
  \item[{\bf A1}]
The  function $\varphi^{(0)}(s,\xi,t), \ (s,\xi,t) \in \big\{\Bbb R \times \overline{\Xi^{(0)}}\times [0,T]\big\} =: X_0$
vanishes uniformly with respect to $s\in \Bbb R$ and $t\in [0, T]$ in neighborhoods of
$\big\{\Upsilon^{(i)}_1(\ell_0)\big\}_{i=1}^M$.  It   belongs to
the space $C^2(X_0),$  and is  uniformly bounded in $X_0$ with all its derivatives.
  \item[{\bf A2}]
For each $i\in\{1,\ldots,M\},$ the function
$$
\varphi^{(i)}(s,y_1^{(i)}, \overline{y}_1^{(i)},t), \quad  \ \big\{(s,y_1^{(i)}) \in \Bbb R \times [0, \ell_i], \ \ |\overline{y}_1^{(i)}|\le h_i,  \  \  t\in [0,T]\big\} =: X_i,
$$
vanishes  uniformly with respect to $(s, \overline{y}_1^{(i)}, t)$  in neighborhoods of the ends  of the  closed interval  $[0, \ell_i],$
it   belongs to  $C^2(X_i),$  and it and all its derivatives are uniformly bounded in $X_i.$

  \item[{\bf A3}]
The  functions $\{q_i(t), \ t\in [0, T]\}_{i=1}^M $ are nonnegative and $q_i \in C^2([0, T]).$

\item[{\bf A4}] To satisfy the zero- and first-order matching conditions in \eqref{probl_rewrite}, it is necessary to fulfill the relations
\begin{equation}\label{match_conditions}
  q_i(0) =  \frac{d q_i}{dt}(0)=0, \quad i\in \{1,\ldots,M\}, \quad \varphi^{(i)}\big|_{t=0} =0, \quad i\in\{0, 1,\ldots,M\},
\end{equation}
or $\varphi^{(i)}\big|_{s=0} =0,$ if $\varphi^{(i)}$ is independent of the variable $t.$
  \end{description}

Based on the assumptions made above and according to  the classical theory of parabolic semilinear initial-boundary problems (see e.g. \cite[Chapt. V, \S\S 6, 7]{Lad_Sol_Ura_1968}) there exists  a unique classical solution to the problem \eqref{probl_rewrite} for each $\varepsilon > 0.$ Obviously, this is also a weak solution in the Sobolev space $L^2\big((0,T); H^1(\Omega_\varepsilon)\big).$

In the remainder of the paper we study the asymptotic behavior of the solution $u_\varepsilon$ as
$\varepsilon \to 0,$ i.e., when the thin junction $\Omega_\varepsilon$ is shrunk into the graph
$$
 \mathcal{I} := I_1 \cup I_2 \cup\ldots\cup I_M,
$$
 where $I_i := \{y^{(i)}\colon  y_1^{(i)} \in  [0, \ell_i], \ \overline{y}_1^{(i)} = (0, 0)\}$.  Namely, we  will
 \begin{itemize}
   \item
   derive the corresponding limit problem $(\varepsilon = 0)$ and prove its well-posedness,
      \item
   construct the asymptotic approximation for the solution to the problem \eqref{probl_rewrite} and prove the corresponding asymptotic estimates,
   \item
study the influence of the parameter $\alpha$ on the asymptotic behavior of $u_\varepsilon.$
 \end{itemize}
 %%%%%%%%%%%%%%%

\begin{remark}
Our approach would also allow to consider the problem \eqref{probl_rewrite} with a  more complex diffusion operator
$\varepsilon\, \mathrm{div}_x\big( \mathbb{D}_\varepsilon(x) \nabla_x u_\varepsilon\big)$ as  %where  the  matrix $\mathbb{D}_\varepsilon$  governs the  %diffusion  process, and  in a
as well as thin junctions with the curvilinear
cylinders (see e.g.\ \cite{Mel-Roh_preprint-2022}). However, we have omitted these generalizations to ease the presentation.
\
It is also possible  to consider different factors $\varepsilon^{\alpha_i}$ for each thin cylinder and the node,  respectively,
as was done in \cite{Mel_Klev_M2AS-2018},  but from the mathematical point of view, this will only complicate calculations. The case $\alpha =0$ for the node and $\alpha =1$ for the thin cylinders was considered by us for the linear convection-dominated problem in \cite{Mel-Roh_preprint-2022}.
\end{remark}

%%%%%%%%%%%%%%%%%%%%%

\section{Asymptotic approximation in the  case $\alpha =1$}\label{Sec:expansions}
Based on the approach from \cite{Mel-Roh_preprint-2022} for a linear convection-diffusion problem, we propose  an asymptotic approximation for  the solution $u_\varepsilon$ to  the problem~\eqref{probl_rewrite} in terms of three parts.
For each thin cylinder, we propose a regular ansatz that is augmented with an additional boundary layer ansatz in the case of an  outlet cylinder (see \S~\ref{subsec_V}) at its base, and all those regular ansatzes are connected with the node ansatz, namely
\begin{itemize}
  \item
  for each $ i \in \{1,\ldots,M\}$  the regular ansatz
 \begin{equation}\label{regul}
   \mathcal{U}^{(i)}_\varepsilon(y^{(i)},t) := w_0^{(i)}(y^{(i)}_1,t) + \varepsilon\Big(w_1^{(i)}(y^{(i)}_1,t)  + u_1^{(i)} \Big(y^{(i)}_1, \dfrac{\overline{y}^{(i)}_1}{\varepsilon}, t \Big)
     \Big) + \varepsilon^2 u_2^{(i)} \Big(y^{(i)}_1, \dfrac{\overline{y}^{(i)}_1}{\varepsilon}, t \Big),
 \end{equation}
 located inside of the thin cylinder $\Omega^{(i)}_\varepsilon;$
 \item
for each $ i \in \{m+1,\ldots,M\}$  the boundary-layer ansatz
\begin{equation}\label{prim+}
\mathcal{B}^{(i)}_\varepsilon(y^{(i)},t)
 := \sum\limits_{k=0}^{2}\varepsilon^{k} \, \Pi_k^{(i)}
    \left(\frac{\ell_i - y^{(i)}_1}{\varepsilon}, \frac{y^{(i)}_2}{\varepsilon}, \frac{y^{(i)}_3}{\varepsilon}, t\right),
 \end{equation}
 located in a neighborhood of the base $\Upsilon_{\varepsilon}^{(i)} (\ell_i)$ of the thin cylinder $\Omega^{(i)}_\varepsilon;$
 \item
and the node-layer one
  \begin{equation}\label{junc}
  \mathcal{N}_\varepsilon(x,t) := N_0\left(\frac{x}{\varepsilon},t\right)  +\varepsilon N_1\left(\frac{x}{\varepsilon},t\right),
\end{equation}
located in a neighborhood of the node $\Omega^{(0)}_\varepsilon$.
\end{itemize}

In the sequel we derive a closed set of problems for the  coefficients $ w^{(i)}_0,$ $ w^{(i)}_1,$ $u^{(i)}_1,$ $u^{(i)}_2$ for $i\in \{1,\ldots,M\}$,   $\Pi^{(i)}_0,\Pi^{(i)}_1,\Pi^{(i)}_2$   for $i\in \{m+1,\ldots,M\}$, and $ {{N}}_0, {N}_1$ of these ansatzes.  Based on these results we  construct  a   final approximation $\mathfrak{A}_\varepsilon$ for $u_\varepsilon $ in Section \ref{Sec:justification} which enables us to analyze the asymptotic regime $\varepsilon \to 0$ in \eqref{probl_rewrite}.

\subsection{Regular parts of the approximation}\label{regular_asymptotic}

Substituting the ansatz for $\mathcal{U}_\varepsilon^{(i)}$ from \eqref{regul} into the corresponding differential equation of the problem~\eqref{probl_rewrite},  collecting terms of the same powers of $\varepsilon^0$ and $\varepsilon^1,$ and equating these sum to zero,  we obtain the following differential equations:
\begin{align}\label{eq_1}
  \Delta_{\bar{\xi}_1}u^{(i)}_1\big(y^{(i)}_1, \bar{\xi}_1, t\big)
    = & \ \partial_t {w}_0^{(i)} (y^{(i)}_1,t) +  \big(  v_1^{(i)}(y^{(i)}_1, t) \, w^{(i)}_0(y^{(i)}_1, t) \big)^\prime \notag
    \\
    +  & \ w^{(i)}_0(y^{(i)}_1, t) \,   \mathrm{div}_{\bar{\xi}_1} \big( \overline{V}^{(i)}(y^{(i)}_1, \bar{\xi}_1, t) \big), \qquad \overline{\xi}_1\in\Upsilon_i (y^{(i)}_1),
\end{align}
and
\begin{align}\label{eq_2}
 \Delta_{\bar{\xi}_1}u^{(i)}_2\big(y^{(i)}_1, \bar{\xi}_1, t\big)
  = &  \ \Big(  v^{(i)}_1(y^{(i)}_1, t) \, \big[ w^{(i)}_1(y^{(i)}_1, t) + u^{(i)}_{1}(y^{(i)}_1,\bar{\xi}_1, t) \big] \Big)^\prime
  +  \partial_t {w}_1^{(i)}(y^{(i)}_1, t)  +   \partial_t {u}_1^{(i)}(y^{(i)}_1,\bar{\xi}_1, t) \notag
    \\
    + & \ \mathrm{div}_{\bar{\xi}_1} \Big( \overline{V}^{(i)}(y^{(i)}_1, \bar{\xi}_1,t) \,
            \big[ w^{(i)}_{1}(y^{(i)}_1, t) + u^{(i)}_{1}(y^{(i)}_1,\bar{\xi}_1, t) \big]\Big)  \notag
            \\
             - & \ \Big( w^{(i)}_{0}(y^{(i)}_1,t)  \Big)^{\prime\prime},
\qquad  \bar{\xi}_1 \in \Upsilon_i\big(y^{(i)}_1\big),
\end{align}
where  $\bar{\xi}_1= (\xi_2, \xi_3) = \big(\frac{{y}_2^{(i)}}{\varepsilon}, \frac{{y}_3^{(i)}}{\varepsilon}\big),$ $\Delta_{\bar{\xi}_1} := \frac{\partial^2}{\partial \xi_2^2} +  \frac{\partial^2}{\partial \xi_3^2},$  \ ``$\prime$'' denotes the derivative with respect to the longitudinal variable $y^{(i)}_1,$  $\Upsilon_i\big(y^{(i)}_1\big) := \big\{ \overline{\xi}_i\in\Bbb{R}^2 \colon |\overline{\xi}_i|< h_i \big\},$
\begin{equation}\label{overline_V}
  \overline{V}^{(i)}(y^{(i)}_1, \bar{\xi}_1, t) = \left( v^{(i)}_2(y^{(i)}_1, \bar{\xi}_1, t), \, v^{(i)}_3(y^{(i)}_1, \bar{\xi}_1, t) \right).
\end{equation}

Substituting \eqref{regul} in the boundary condition on the  lateral surface of the thin cylinder $\Omega^{(i)}_\varepsilon$ and using Taylor’s formula for $\varphi^{(i)},$ we deduce the following relations:
\begin{equation}\label{bc_1}
  - \partial_{\bar{\nu}_{\bar{\xi}_1}} u^{(i)}_1\big(y^{(i)}_1, \bar{\xi}_1, t\big) + w^{(i)}_{0}(y^{(i)}_1,t) \, \overline{V}^{(i)}(y^{(i)}_1, \bar{\xi}_1,t) \boldsymbol{\cdot} \bar{\nu}_{\xi_1}
 = \varphi^{(i)}\big(w^{(i)}_{0}(y^{(i)}_1,t), y^{(i)}_1, \bar{\xi}_1, t\big), \quad \bar{\xi}_1 \in \partial
  \Upsilon_i\big(y^{(i)}_1\big),
\end{equation}
and
\begin{multline}\label{bc_2}
  - \partial_{\bar{\nu}_{\bar{\xi}_1}} u^{(i)}_2\big(y^{(i)}_1, \bar{\xi}_1, t\big) + \big( w^{(i)}_{1}(y^{(i)}_1, t) + u^{(i)}_{1}(y^{(i)}_1,\bar{\xi}_1, t) \big) \, \overline{V}^{(i)}(y^{(i)}_1, \bar{\xi}_1,t) \boldsymbol{\boldsymbol{\cdot}} \bar{\nu}_{\xi_1}
   \\
   = \partial_s\varphi^{(i)}\big(w^{(i)}_{0}(y^{(i)}_1,t), y^{(i)}_1, \bar{\xi}_1, t\big) \, \big( w^{(i)}_{1}(y^{(i)}_1, t) + u^{(i)}_{1}(y^{(i)}_1,\bar{\xi}_1, t) \big), \quad \bar{\xi}_1 \in \partial \Upsilon_i\big(y^{(i)}_1\big),
\end{multline}
where $\partial_{\bar{\nu}_{\bar{\xi}_1}} $ is the derivative along the  outward unit normal $\bar{\nu}_{\bar{\xi}_1} =
\big(\nu_2(\xi_2,\xi_3),  \nu_3(\xi_2,\xi_3)\big)$ to the boundary of the disk $\Upsilon_i\big(y^{(i)}_1\big).$

Equations \eqref{eq_1} and \eqref{bc_1},  and equations \eqref{eq_2} and \eqref{bc_2}
are linear Neumann problems in $\Upsilon_i\big(y^{(i)}_1\big)$ with respect to the variables $\bar{\xi}_1,$ where  the variables $y^{(i)}_1$ and $t$ are regarded as parameters from the set $I_\varepsilon^{(i)} \times (0, T)$,
$ I_\varepsilon^{(i)} := \{y^{(i)}\colon y^{(i)}_1 \in (\varepsilon \ell_0, \ell_i),$ $\bar{y}^{(i)}_1 = (0, 0) \}.
$
To ensure  uniqueness, we supply each  problem with the  condition
\begin{equation}\label{uniq_1}
  \langle u_k^{(i)}(y^{(i)}_1, \, \cdot \, ,  t ) \rangle_{\Upsilon_i\big(y^{(i)}_1\big)} :=  \int_{\Upsilon_i\big(y^{(i)}_1\big)} u_k^{(i)}\big(y^{(i)}_1, \bar{\xi}_1, t\big) \, d\bar{\xi}_1 = 0 \quad (k\in \{1, 2\}).
\end{equation}

Writing down the solvability condition for each problem, we deduce the differential equations for the coefficients  $w^{(i)}_0$ and $w^{(i)}_1$:
\begin{equation}\label{lim_0}
  \partial_t{w}^{(i)}_0(y^{(i)}_1,t) + \big( v_i^{(i)}(y^{(i)}_1,t)\, w^{(i)}_0(y^{(i)}_1,t) \big)^\prime = - \widehat{\varphi}^{(i)}\big(w^{(i)}_0, y^{(i)}_1, t\big), \quad (y^{(i)}_1, t) \in I_\varepsilon^{(i)} \times (0, T),
\end{equation}
where
\begin{equation}\label{hat_phi}
  \widehat{\varphi}^{(i)}(w^{(i)}_0,y^{(i)}_1, t) := \frac{1}{\pi h^2_i} \int_{\partial \Upsilon_i(y^{(i)}_1)} \varphi^{(i)}\big(w^{(i)}_0(y^{(i)}_1,t), y^{(i)}_1, \bar{\xi}_1, t\big)\, d\sigma_{\bar{\xi}_1}.
\end{equation}
and
\begin{equation}\label{lim_1}
  \partial_t{w}^{(i)}_1(y^{(i)}_1,t) \, + \, \big( v_i^{(i)}(y^{(i)}_1,t)\, w^{(i)}_1(y^{(i)}_1,t) \big)^\prime
 \, + \,\partial_s \widehat{\varphi}^{(i)}\big(w^{(i)}_0, y^{(i)}_1, t\big) \, \, w^{(i)}_1(y^{(i)}_1,t) = f_i(y^{(i)}_1,t)
  \end{equation}
in $I_\varepsilon^{(i)} \times (0, T),$ where
\begin{equation}\label{fun_1}
  f_i(y^{(i)}_1,t) :=  \big(w^{(i)}_0(y^{(i)}_1,t) \big)^{\prime\prime} -
\frac{1}{\pi h^2_i} \int_{\partial \Upsilon_i(y^{(i)}_1)} \partial_s \varphi^{(i)}\big(w^{(i)}_0, y^{(i)}_1, \bar{\xi}_1, t\big)\,
u^{(i)}_1\big(y^{(i)}_1, \bar{\xi}_1, t\big) \, d\sigma_{\bar{\xi}_1}.
\end{equation}

We see that the equation \eqref{lim_0} is a semilinear partial differential equation of  first order. Let  $w^{(i)}_0$ be a solution of this equation (we will show its existence in Section \ref{regular_asymptotic}). Then there is a unique solution to the inhomogeneous Neumann problem consisting of the equations \eqref{eq_1}, \eqref{bc_1} and \eqref{uniq_1}  $(k=1).$

The  equation \eqref{lim_1} is a linear partial differential equation of the first order, in which the right-hand side is determined with the help of
$w^{(i)}_0,$ $u^{(i)}_1,$ and $f_i.$ Again, let there be a solution of this equation. Then there is a unique solution to the inhomogeneous Neumann problem consisting of the equations \eqref{eq_2}, \eqref{bc_2} and \eqref{uniq_1}  $(k=2).$

Thus, we can uniquely determine all ansatz functions
in the regular  part $ \mathcal{U}^{(i)}_\varepsilon$  in\eqref{regul}. From the calculations made above it follows
that $ \mathcal{U}^{(i)}_\varepsilon$  satisfies
\begin{equation}\label{ref_1}
\left\{\begin{array}{c}
 \partial_t \mathcal{U}^{(i)}_\varepsilon -  \varepsilon\, \Delta_{y^{(i)}} \mathcal{U}^{(i)}_\varepsilon +
  \mathrm{div}_{y^{(i)}} \big( \overrightarrow{V_\varepsilon}^{(i)} \, \mathcal{U}^{(i)}_\varepsilon\big)
   = \varepsilon^2\,  \mathfrak{F}^{(i)}_\varepsilon \quad
    \text{in} \ \Omega^{(i)}_\varepsilon \times (0, T),
\\[2pt]
-   \,  \partial_{\overline{\nu}_\varepsilon} \mathcal{U}^{(i)}_\varepsilon +  \mathcal{U}^{(i)}_\varepsilon \, \overline{V_\varepsilon}\boldsymbol{\cdot}\overline{\nu}_\varepsilon   =  \varphi^{(i)}_\varepsilon\big(\mathcal{U}^{(i)}_\varepsilon, y^{(i)},t\big) + \varepsilon^2 \,  \Phi^{(i)}_\varepsilon \quad  \text{on} \ \Gamma^{(i)}_\varepsilon \times (0, T),
    \end{array}\right.
\end{equation}
where
\begin{align}\label{res_2}
  \mathfrak{F}^{(i)}_\varepsilon (y^{(i)}, t) = &  \ \bigg[
  \Big(  v^{(i)}_1(y^{(i)}_1, t) \, u^{(i)}_{2}(y^{(i)}_1,\bar{\xi}_1, t)  \Big)^\prime
  +   \partial_t {u}_2^{(i)}(y^{(i)}_1,\bar{\xi}_1, t) \notag
  \\
 + & \
  \mathrm{div}_{\bar{\xi}_i} \Big( \overline{V}^{(i)}(y^{(i)}_1, \bar{\xi}_1,t) \,
            u^{(i)}_{2}(y^{(i)}_1,\bar{\xi}_1, t)\Big) \notag
            \\
             - & \
                \Big( w^{(i)}_{1}(y^{(i)}_1, t)  +  u^{(i)}_{1}(y^{(i)}_1, \bar{\xi}_1,t) \Big)^{\prime\prime} - \Big(u^{(i)}_{2}(y^{(i)}_1, \bar{\xi}_1,t) \Big)^{\prime\prime}\bigg] \Big|_{\bar{\xi}_1\,  = \, \bar{y}^{(i)}_1 /\varepsilon},
\end{align}
\begin{multline}\label{res_3}
  \Phi^{(i)}_\varepsilon(y^{(i)}, t) =  \bigg[ u^{(i)}_{2}(y^{(i)}_1,\bar{\xi}_1, t) \, \Big(\partial_s\varphi^{(i)}\big(w^{(i)}_{0}(y^{(i)}_1,t), y^{(i)}_1, \bar{\xi}_1, t\big)  - \overline{V}^{(i)}(y^{(i)}_1, \bar{\xi}_1,t) \boldsymbol{\boldsymbol{\cdot}} \bar{\nu}_{\xi_1}\Big)
            \\
             +  \frac{1}{2} \partial^2_{ss}\varphi^{(i)}\big(\theta_\varepsilon, y^{(i)}_1, \bar{\xi}_1, t\big)
                \Big( w^{(i)}_{1}(y^{(i)}_1, t)  +  u^{(i)}_{1}(y^{(i)}_1, \bar{\xi}_1,t) + \varepsilon u^{(i)}_{2}(y^{(i)}_1,\bar{\xi}_1, t)\Big)\bigg] \Big|_{\bar{\xi}_1\,  = \, \bar{y}^{(i)}_1 /\varepsilon}.
\end{multline}

\begin{remark}\label{r_2_1}
Since the functions $\{\varphi^{(i)}\}_{i=1}^M$ and $\{\overline{V}^{(i)}\}_{i=1}^M$  have compact supports with respect to the corresponding longitudinal variable, the coefficients $u_1^{(i)}, u_2^{(i)}$ and  the function $\Phi^{(i)}_\varepsilon$ vanish in the corresponding neighborhoods
of the ends of $[0, \ell_i].$
\end{remark}

\subsection{Node-layer part of the approximation\label{subsec_Inner_part}}
To find gluing conditions for the coefficients $\{w_0^{(i)}\}_{i=1}^M,$ and $\{ w_1^{(i)}\}_{i=1}^M$ at the origin,  we should launch
the inner part of the asymptotics for the solution.  For this purpose we pass to the scaled variables
$
\xi=\frac{x}{\varepsilon} .
$
Letting $\varepsilon \to 0,$ we see  that the domain $\Omega_\varepsilon$ is transformed into the unbounded domain $\Xi$ that  is the union of the domain~$\Xi^{(0)}$ and  the semi-infinite cylinders
$$
\Xi^{(i)}
 =  \big\{ \xi^{(i)} =\big(\xi^{(i)}_1,\xi^{(i)}_2,\xi^{(i)}_3\big)\in\Bbb R^3 \colon
    \  \ell_0<\xi^{(i)}_1 <+\infty,
    \ |\overline{\xi}^{(i)}_1|<h_i \big\},
\quad i\in \{1,\ldots,M\},
$$
i.e., $\Xi$ is the interior of  $\bigcup_{i=0}^M\overline{\Xi^{(i)}}.$
Here each cylinder is described  with corresponding variables $\xi^{(i)} =\dfrac{y^{(i)}}{\varepsilon}.$
The  boundary of $\Xi$ is partitioned according to
$$
\Gamma_0 = \partial\Xi \backslash \Big(\bigcup_{i=1}^M \Gamma_i \Big) \quad \text{and} \quad
\Gamma_i =
\big\{ \xi^{(i)}\in\Bbb R^3 \colon
    \  \ell_0<\xi^{(i)}_1 <+\infty,
    \ |\overline{\xi}^{(i)}_1|=h_i \big\}
$$
for $\  i\in \{1,\ldots,M\}.$

Substituting  \eqref{junc}
 into differential equations of the problem~\eqref{probl_rewrite} and the  boundary condition on  $\Gamma_\varepsilon^{(0)},$  collecting terms of the same powers of $\varepsilon^{-1}$ and $\varepsilon^0,$ and equating these sums to zero,  we get the following problems
 for $N_0$ and $ N_1$ from \eqref{junc}:
\begin{equation}\label{N_0_prob}
\left\{\begin{array}{rcll}
  -  \Delta_\xi {N}_0(\xi,t)  + \overrightarrow{V}(\xi,t) \boldsymbol{\cdot} \nabla_\xi {N}_0(\xi,t) & = &  0, &  \xi \in\Xi^{(0)},
\\[2mm]
\partial_{\boldsymbol{\nu}_\xi}  N_0(\xi,t) &=& 0, &   \xi \in \Gamma_0,
\\[2mm]
 -   \Delta_{\xi^{(i)}} {N}_0(\xi^{(i)},t) +
  \mathrm{v}_i(t) \, \partial_{\xi^{(i)}_1}{N}_0(\xi^{(i)},t)    & = &  0, &
     \xi^{(i)} \in\Xi^{(i)},
\\[2mm]
\partial_{\bar{\nu}^{(i)}} N_0(\xi^{(i)},t)  &=&  0, &
   \xi^{(i)} \in \Gamma_i,
\\[2mm]
N_0(\xi^{(i)},t)     \  \sim \   w^{(i)}_{0}(0,t)   &\text{as}  & \xi^{(i)}_1 \to +\infty, &
   \xi^{(i)}  \in \Xi^{(i)}, \ \ i\in \{1,\ldots,M\},
\end{array}\right.
\end{equation}
and
\begin{equation}\label{N_1_prob}
\left\{\begin{array}{rclll}
-   \Delta_\xi {N}_1(\xi,t) +
  \overrightarrow{V}(\xi,t) \boldsymbol{\cdot} \nabla_\xi {N}_1(\xi,t) & = &  - \partial_t {{N}}_{0}(\xi,t), &  \xi \in\Xi^{(0)},
\\[2mm]
- \partial_{\boldsymbol{\nu}_\xi}  N_1(\xi,t) &=& \varphi^{(0)}\big(N_0(\xi,t), \xi,t\big) , &   \xi \in \Gamma_0,
\\[2mm]
 -   \Delta_{\xi^{(i)}} {N}_1(\xi^{(i)},t)  +
  \mathrm{v}_i(t) \, \partial_{\xi^{(i)}_1}{N}_1(\xi^{(i)},t) & = &  - \partial_t{{N}}_{0}(\xi^{(i)},t), &
     \xi^{(i)} \in\Xi^{(i)},
\\[2mm]
\partial_{\bar{\nu}^{(i)}} N_1(\xi^{(i)},t)  &=&  0, &
   \xi^{(i)} \in \Gamma_i,
\\[2mm]
N_1(\xi^{(i)},t)     \  \sim \   w^{(i)}_{1}(0,t) + \Psi^{(i)}_{1}(\xi^{(i)},t)      &\text{as}  &\xi^{(i)}_1 \to +\infty, &
    \xi^{(i)}  \in \Xi^{(i)}, \ \ i\in \{1,\ldots,M\},
 \end{array}\right.
\end{equation}
where $ \partial_{\xi^{(i)}_1} = \frac{\partial}{\partial{\xi^{(i)}_1}},$ $\partial_{\bar{\nu}^{(i)}}$ is the derivative along the  outward unit normal $\bar{\nu}^{(i)} = \big(\nu_2(\xi^{(i)}_2,\xi^{(i)}_3),  \nu_3(\xi^{(i)}_2,\xi^{(i)}_3)\big)$ to $\Gamma_i,$
$\overrightarrow{V}(\xi) = \overrightarrow{V}_\varepsilon^{(0)}(x)\big|_{x= \varepsilon \xi}$ in $\Xi^{(0)},$ $\overrightarrow{V}(\xi^{(i)}) = \left( \mathrm{v}_i(t), \, 0,\, 0\right)$ in $\xi^{(i)} \in \Xi^{(i)}.$
Note that  the variable $t$ appears as a parameter only  in the  steady convection-diffusion problems \eqref{N_0_prob} and \eqref{N_1_prob}.

The asymptotic conditions at infinity in the last relations of  both of the problems \eqref{N_0_prob} and \eqref{N_1_prob}  appear by matching the regular and node-layer parts of the  asymptotics in a neighborhood of the node: the asymptotics of the terms $N_0$ and $N_1$ as $\xi^{(i)}_1  \to +\infty$ have to coincide with the corresponding asymptotics of  terms of the regular expansions (\ref{regul}) as $y^{(i)}_1 \to +0,$ $ i\in \{1,\ldots,M\},$ respectively. Using Taylor's formula for each term of the regular asymptotics  at  $y^{(i)}_1=0,$ then passing to the scaled variable $\xi_1^{(i)}$
and collecting the coefficients of the same powers of $\varepsilon,$  we get these relations with
\begin{equation}\label{Psi_1}
\Psi_{0}^{(i)} \equiv 0, \qquad   \Psi_{1}^{(i)}(\xi^{(i)}_1,t)
 =   \xi^{(i)}_1  \dfrac{\partial  w_{0}^{(i)}}{\partial y^{(i)}_1} (0,t),
\quad i\in \{1,\ldots,M\}.
\end{equation}

We look for solutions to these problems in the form
\begin{equation}\label{new-solution_k}
N_k(\xi^{(i)}_1,t)  = \sum\limits_{i=1}^M\big(w^{(i)}_{k}(0,t) + \Psi^{(i)}_{k}(\xi^{(i)}_1,t) \big) \,\chi_{\ell_0}(\xi^{(i)}_1) + \widetilde{N}_k \quad (k \in \{0, 1\}),
\end{equation}
where $ \chi_{\ell_0} \in C^{\infty}(\Bbb{R})$ is a smooth cut-off function such that
$\ 0\leq \chi_{\ell_0} \leq1,$  $\chi_{\ell_0}(s) =0$ if $s \leq  2\ell_0$  and
$\chi_{\ell_0}(s) =1$ if $s \geq  3\ell_0.$
Consequently  $\widetilde{N}_0$ and $\widetilde{N}_1$ have  to satisfy, respectively,
\begin{equation}\label{tilda_N_0_prob}
\left\{\begin{array}{rcll}
  -  \Delta_\xi \widetilde{N}_0(\xi,t)  + \overrightarrow{V}(\xi,t) \boldsymbol{\cdot} \nabla_\xi \widetilde{N}_0(\xi,t) & = &  0, &  \xi \in\Xi^{(0)},
\\[2mm]
\partial_{\boldsymbol{\nu}_\xi}  \widetilde{N}_0(\xi,t) &=& 0, &   \xi \in \Gamma_0,
\\[2mm]
 -   \Delta_{\xi^{(i)}} \widetilde{N}_0(\xi^{(i)},t) +
  \mathrm{v}_i(t) \, \partial_{\xi^{(i)}_1}\widetilde{N}_0(\xi^{(i)},t)    & = & g_0^{(i)}(\xi^{(i)}_1,t), &
     \xi^{(i)} \in\Xi^{(i)},
\\[2mm]
\partial_{\bar{\nu}^{(i)}} \widetilde{N}_0(\xi^{(i)},t)  &=&  0, &
   \xi^{(i)} \in \Gamma_i,
\\[2mm]
   \widetilde{N}_0(\xi^{(i)},t) \ \rightarrow \ 0 &\text{as} & \xi^{(i)}_1 \to +\infty, &  \xi^{(i)}  \in \Xi^{(i)}, \ \ i\in \{1,\ldots,M\},
\end{array}\right.
\end{equation}
and
\begin{equation}\label{tilda_N_1_prob}
\left\{\begin{array}{rcll}
-   \Delta_\xi \widetilde{N}_1 +
  \overrightarrow{V}(\xi,t) \boldsymbol{\cdot} \nabla_\xi \widetilde{N}_1(\xi,t) & = &  - \partial_t {{N}}_{0}(\xi,t), &  \xi \in\Xi^{(0)},
\\[2mm]
- \partial_{\boldsymbol{\nu}_\xi}  \widetilde{N}_1(\xi,t) &=& \varphi^{(0)}\big(N_0(\xi,t), \xi,t\big) , &   \xi \in \Gamma_0,
\\[2mm]
 -   \Delta_{\xi^{(i)}} \widetilde{N}_1  +
  \mathrm{v}_i(t) \, \partial_{\xi^{(i)}_1}\widetilde{N}_1(\xi^{(i)},t) & = &  - \partial_t \widetilde{N}_{0} + g_1^{(i)}(\xi^{(i)}_1,t), &
     \xi^{(i)} \in\Xi^{(i)},
\\[2mm]
\partial_{\bar{\nu}^{(i)}} \widetilde{N}_1(\xi^{(i)},t)  &=&  0, &
   \xi^{(i)} \in \Gamma_i,
\\[2mm]
\widetilde{N}_1(\xi^{(i)},t) \ \rightarrow \ 0 &\text{as} & \xi^{(i)}_1 \to +\infty, &  \xi^{(i)}  \in \Xi^{(i)}, \ \ i\in \{1,\ldots,M\},
 \end{array}\right.
\end{equation}
where
\begin{equation}\label{F_1-0}
g_0^{(i)}(\xi^{(i)}_1,t)  =   w^{(i)}_0(0,t) \, \chi''_{\ell_0}(\xi^{(i)}_1) - \mathrm{v}_i(t)  \, w^{(i)}_0(0,t) \, \chi'_{\ell_0}(\xi^{(i)}_1) ,
\end{equation}
\begin{align}\label{F_1}
g_1^{(i)}(\xi^{(i)}_1,t) = & \  w^{(i)}_1(0,t) \, \chi''_{\ell_0}(\xi^{(i)}_1) + \dfrac{\partial w_0^{(i)}}{\partial y^{(i)}_1}(0,t)\,   \Big( \big(\xi^{(i)}_1 \chi_{\ell_0}^{\prime}(\xi^{(i)}_1)\big)^\prime
 +  \chi_{\ell_0}^{\prime}(\xi^{(i)}_1) \Big) \notag
\\
- & \ \mathrm{v}_i(t)  \Big( w^{(i)}_1(0,t)  + \dfrac{\partial w_0^{(i)}}{\partial y^{(i)}_1}(0,t) \,  \xi^{(i)}_1 \Big) \chi^\prime_{\ell_0}(\xi^{(i)}_1).
\end{align}

Before we proceed with the limit problem for $w^{(i)}_0$
we employ a result from  the appendix in Section \ref{appendixellip}.
The equality \eqref{cong_cond_g} from   Proposition \ref{Prop-2-1}
that is necessary and sufficient for   unique weak solvability of \eqref{tilda_N_0_prob}  results in the Kirchhoff condition
\begin{equation}\label{cong_cond}
  \sum_{i=1}^{M}  h_i^2 \,\mathrm{v}_i(t)  \,  w_0^{(i)}(0, t) = 0.
\end{equation}
This leads us to a problem on the graph $\mathcal{I}.$
%%%%%%%%%%%%%%%%%%%%%%%%%%%%%%%%%%%%%%%%%%%%%%%%%%%%%%

%%%%%%%%%%%%%%%%%%%%%%%%%%%%%%%%%%%%%%%%%%%%%%%%%%%%%%
\subsection{The limit problem\label{limitproblemr}}
%%%%%%%%%%%%%%%%%%%%%%%%%%%%%%%%%%%%%%%%%%%%%%%%%%%%%%
On the graph $\mathcal{I},$ we have the relations
\begin{equation}\label{limit_prob_1}
 \left\{\begin{array}{rcll}
 \partial_t{w}^{(i)}_0 + \Big( v_i^{(i)}(y^{(i)}_1,t)\, w^{(i)}_0 \Big)^\prime &=& - \widehat{\varphi}^{(i)}\big(w^{(i)}_0, y^{(i)}_1, t\big),&  (y^{(i)}_1, t) \in  I_i \times (0, T),
 \\
 & &  & i \in \{1,\ldots,M\},
 \\
 \sum_{i=1}^{M}  h_i^2 \,\mathrm{v}_i(t)  \,  w_0^{(i)}(0, t) &=& 0, & t \in (0, T),
 \end{array}\right.
\end{equation}
which are supplemented by the following boundary and initial conditions:
\begin{equation}\label{limit_prob_2}
 \left\{\begin{array}{rclll}
    w_0^{(i)}(\ell_i,  t) & = & q_i(t), & t \in [0, T],  & i \in \{1,\ldots,m\},
\\[2mm]
    w_0^{(i)}(y^{(i)}_1,  0) & = & 0, & y^{(i)}_1  \in [0, \ell_i],  & i \in \{1,\ldots,M\}.
  \end{array}\right.
\end{equation}
The problem \eqref{limit_prob_1}-\eqref{limit_prob_2} is called the hyperbolic \textit{limit problem to} \eqref{probl_rewrite} in the case $\alpha =1.$
In the sequel we discuss its solvability depending on the number of  inlet and outlet cylinders.
The solvability criteria base on the characteristics approach which is summarized in
Section \ref{appendixhyp} of the appendix.
%
%We return to the limit problem \eqref{limit_prob_1}  and observe  f
From the general  results therein we deduce that for each $i \in \{1\ldots,m\}$, i.e., for each inlet cylinder,  we can find a solution to  the problem
\begin{equation}\label{limit_prob_1_2}
 \left\{\begin{array}{rcll}
 \partial_t{w}^{(i)}_0 + \Big( v_i^{(i)}(y^{(i)}_1,t)\, w^{(i)}_0 \Big)^\prime &=& - \widehat{\varphi}^{(i)}\big(w^{(i)}_0, y^{(i)}_1, t\big),&  (y^{(i)}_1, t) \in  I_i \times (0, T),
   \\[2mm]
    w_0^{(i)}(\ell_i,  t) & = & q_i(t), & t \in [0, T],
\\[2mm]
    w_0^{(i)}(y^{(i)}_1,  0) & = & 0, & y^{(i)}_1  \in [0, \ell_i].
  \end{array}\right.
\end{equation}
Indeed, this problem is reduced to \eqref{prob_1}  with the substitution $x = \ell_i -y^{(i)}_1.$ As a result, we get the equation
$$
\partial_t \tilde{w}^{(i)}_0(x,t) + \tilde{v}_i(x,t)\, \partial_{x} \tilde{w}^{(i)}_0(x,t) =  \partial_{x}\tilde{v}_i(x,t)\, \tilde{w}^{(i)}_0(x,t) - \widehat{\varphi}^{(i)}\big(\tilde{w}^{(i)}_0, \ell_i -x, t\big),
$$
where $\tilde{w}^{(i)}_0(x,t)= {w}^{(i)}_0(\ell_i -x,t)$ and $\tilde{v}(x,t) = - v_i^{(i)}(\ell_i -x,t) > 0.$
Taking into account the assumptions {\bf A2}--{\bf A4}, the conditions \eqref{bound_1} and \eqref{math_1} are satisfied for the problem \eqref{limit_prob_1_2}, and therefore, it has a unique classical solution.

The other components $\{w^{(i)}_0\}_{i=m+1}^M$ of the solution to the limit problem \eqref{limit_prob_1}-\eqref{limit_prob_2}
cannot be uniquely determined except of the cases when $m=1$ or $m=M-1.$

If $m=1,$ then  $\{w^{(i)}_0\}_{i=2}^M$ are respectively classical solutions to the problem
\begin{equation}\label{limit_prob_1_3}
 \left\{\begin{array}{rcll}
 \partial_t{w}^{(i)}_0 + \Big( v_i^{(i)}(y^{(i)}_1,t)\, w^{(i)}_0 \Big)^\prime &=& - \widehat{\varphi}^{(i)}\big(w^{(i)}_0, y^{(i)}_1, t\big),&  (y^{(i)}_1, t) \in  I_i \times (0, T),
   \\[2mm]
    w_0^{(i)}(0,  t) & = & w_0^{(1)}(0,  t), & t \in [0, T],
\\[2mm]
    w_0^{(i)}(y^{(i)}_1,  0) & = & 0, & y^{(i)}_1  \in [0, \ell_i].
  \end{array}\right.
\end{equation}
Such a solution exists, since $v_i^{(i)} >0,$ $w_0^{(1)}(0,0)=0,$ and from \eqref{Volt_1} we deduce that
$\partial_t w_0^{(1)}(0,0)=0.$
Due to \eqref{cond_1}, the Kirchhoff transmission condition in \eqref{limit_prob_1} is automatically satisfied. Thus, in this case,
the limit problem \eqref{limit_prob_1}-\eqref{limit_prob_2} has a classical solution, and two conditions  are satisfied at the graph vertex
(the continuity  condition and the Kirchhoff condition).

If $m=M-1,$ then the function $w^{(M)}_0$ must be a solution to the problem
\begin{equation}\label{limit_prob_1_4}
 \left\{\begin{array}{c}
 \partial_t{w}^{(M)}_0 + \Big( v_M^{(M)} \, w^{(M)}_0 \Big)^\prime = - \widehat{\varphi}^{(M)}\big(w^{(M)}_0, y^{(M)}_1, t\big),   \quad (y^{(M)}_1, t) \in  I_M \times (0, T)
   \\[2mm]
    w_0^{(M)}(0,  t)  =  - \dfrac{1}{h_M^2 \mathrm{v}_M(t) } \displaystyle{\sum_{i=1}^{M-1}  h_i^2 \,\mathrm{v}_i(t)  \,  w_0^{(i)}(0, t),} \quad t \in [0, T],
\\[4mm]
    w_0^{(M)}(y^{(M)}_1,  0)  =  0, \quad  y^{(M)}_1  \in [0, \ell_M],
  \end{array}\right.
\end{equation}
so that the Kirchhoff condition is fulfilled for the limit problem \eqref{limit_prob_1}-\eqref{limit_prob_2}. By the same arguments as before, we conclude that there exists a unique classical solution to the problem  \eqref{limit_prob_1_4},  and hence a unique classical solution to the limit problem.

If $1< m < M-1,$ then for each $i\in \{m+1,\ldots,M\}$  we propose to define $w^{(i)}_0$ as a classical solution to the problem
\begin{equation}\label{limit_prob_1_5}
 \left\{\begin{array}{c}
 \partial_t{w}^{(i)}_0 + \Big( v_i^{(i)}(y^{(i)}_1,t)\, w^{(i)}_0 \Big)^\prime  = - \widehat{\varphi}^{(i)}\big(w^{(i)}_0, y^{(i)}_1, t\big), \quad   (y^{(i)}_1, t) \in  I_i \times (0, T),
   \\[2mm]
    w_0^{(i)}(0,  t)  =  - \dfrac{1}{(M-m)\, h_i^2 \, \mathrm{v}_i(t) } \displaystyle{\sum_{i=1}^{m}  h_i^2 \,\mathrm{v}_i(t)  \,  w_0^{(i)}(0, t),}  \ \ t \in [0, T],
\\[2mm]
    w_0^{(i)}(y^{(i)}_1,  0)  =  0, \quad  y^{(i)}_1  \in [0, \ell_i],
  \end{array}\right.
\end{equation}
(our choice is argued and discussed in the final Section  \ref{conclusions}). Then the Kirchhoff condition is obviously satisfied in the limit problem \eqref{limit_prob_1}-\eqref{limit_prob_2}, and hence the limit problem has a classical solution.
\begin{remark}\label{add_assumptios}
The right-hand side of the differential equation \eqref{ref_1} contains the term $\big(u^{(i)}_{2}(y^{(i)}_1, \bar{\xi}_1,t) \big)^{\prime\prime}$ (see \eqref{res_2}), which means that  the solutions $\{w^{(i)}_0\}_{i=1}^M$ require  $C^4$-smoothness in $y^{(i)}_1.$ Using the approach in the proof of Theorem 2 \cite{Myshkis_1960}, one can  show that the boundedness of the derivatives of $\varphi^{(i)}$ up to  the fifth order in the variables $s$
 and $y^{(i)}_1$ from $X_i$ (the domain of definition of $\varphi^{(i)}$),
${v}^{(i)}_1 \in C^{4}([0, \ell_i]),$ $\partial_{\xi_2}\overline{V}^{(i)}\in C^{2}([0, \ell_i]),$
$\partial_{\xi_3}\overline{V}^{(i)}\in C^{2}([0, \ell_i])$ for $i\in \{1,\ldots,M\},$
 and the relations
\begin{equation}\label{match_conditions+}
 \frac{d^2 q_i}{dt^2}(0)= \frac{d^3 q_i}{dt^3}(0) = \frac{d^4 q_i}{dt^4}(0) = 0
\end{equation}
are sufficient conditions  for the $C^4$-smoothness of $w^{(i)}_0$ $(i \in \{1,\ldots,m\}).$ The conditions \eqref{match_conditions+}
 are necessary to satisfy the corresponding matching condition at the point $(\ell_i,0),$ to check which we also use the assumption {\bf A2}, namely,
the fact that $\varphi^{(i)}$ has the compact support  with respect to $y^{(i)}_1 \in (0, \ell_i).$

In addition, from the corresponding Volterra integral equation we derive that
$$
\frac{\partial^k w_0^{(i)}}{\partial t^k}(0,0)=0, \quad k\in \{1,\ldots,4\}, \quad  i\in \{1,\ldots,m\}.
$$
Assuming the same smoothness for the other functions $\{\varphi^{(i)}\}_{i=m+1}^M,$ we obtain $C^4$-smoothness  for the solutions $\{w^{(i)}_0\}_{i=m+1}^M$ as well.
\end{remark}

Thus, the limit problem \eqref{limit_prob_1}-\eqref{limit_prob_2} has a solution and the solvability condition \eqref{cong_cond} for the problem \eqref{tilda_N_0_prob} is satisfied. This means that there exists a unique solution $N_0$ to the problem \eqref{N_0_prob}. The solvability condition \eqref{lim_0} also holds for the problem \eqref{eq_1} and \eqref{bc_1} and therefore, there  exists a unique solution $u_1^{(i)}$ that satisfies the condition  \eqref{uniq_1}. It is easy to verify that
\begin{equation}\label{zero_t_0}
   N_0\big|_{t=0} = \partial^k_t N_0\big|_{t=0} = 0 \quad \text{and} \quad
   u_1^{(i)}\big|_{t=0} = \partial_t u_1^{(i)}\big|_{t=0} = 0
\end{equation}
for $k\in \{1,\ldots,4\}$  and $i\in \{1,\ldots,M\}.$
In addition, since the right-hand sides  in the problem \eqref{tilda_N_0_prob} are uniformly bounded with respect to $(\xi, t)\in \Xi\times [0, T]$ and have compact supports in $\xi^{(i)}_1$,  the solution to the problem~\eqref{N_0_prob} has  the following  asymptotics uniform with respect to $t\in [0, T]$:
\begin{equation}\label{rem_exp-decrease+0}
N_0(\xi,t) = w^{(i)}_{0}(0,t)  +  \mathcal{ O}(\exp(-\beta_0\xi_i))
\ \ \mbox{as} \ \ \xi_i\to+\infty,  \   \xi  \in \Xi^{(i)},  \  i=\{1,\ldots,M\} \ \ (\beta_0 >0).
\end{equation}
%Here, the symbol $\mathcal{ O}(\cdot)$ is big O notation.

%%%%%%%%%%%%%%%%%%%%%%%%%%%%

\subsection{Existence of solutions $\{w^{(i)}_1\}_{i=1}^M, \, \{u^{(i)}_2\}_{i=1}^M$ and $N_1$}\label{par_222}
Due to Proposition~\ref{Prop-2-1}  in  Section \ref{appendixellip}
the solvability condition for the problem \eqref{tilda_N_1_prob}
is as follows
$$
\sum_{i=1}^{M}  h_i^2 \,\mathrm{v}_i \,  w_1^{(i)}(0, t) = {\bf d}_1(t),
$$
where
\begin{align}\label{d_1}
 {\bf d}_1(t) :=&  - \frac{1}{\pi}\int_{\Gamma_0} \varphi^{(0)}\big(N_0, \xi,t\big)\, d\sigma_\xi - \frac{1}{\pi}\int_{\Xi^{(0)}} \partial_t {N}_{0}(\xi,t)\, d\xi - \frac{1}{\pi} \sum_{i=1}^{M}\int_{\Xi^{(i)}}  \partial_t \widetilde{N}_{0}(\xi,t) \, d\xi
\notag
\\
 & + \sum_{i=1}^{M}h^2_i \, \partial_{y_1^{(i)} }w_0^{(i)}(0,t)  \Big(1 - \mathrm{v}_i(t)  \,  \int_{2\ell_0}^{3\ell_0} \xi^{(i)}_1\,  \chi^\prime_{\ell_0}(\xi^{(i)}_1) \, d\xi^{(i)}_1 \Big).
\end{align}

Thus, for $\{w^{(i)}_1\}_{i=1}^M$ we get the problem
\begin{equation}\label{prob_w_1}
 \left\{\begin{array}{rcll}
 \partial_t{w}^{(i)}_1 \, + \,  v_i^{(i)}\, \partial_{y^{(i)}_1} w^{(i)}_1 & =& a_i\,  {w}^{(i)}_1 \, + \, f_i  & \text{in} \  I_i \times (0, T),  \ \ i \in \{1,\ldots,M\},
 \\[2mm]
\sum_{i=1}^{M} h_i^2 \,  \mathrm{v}_i \,   w_1^{(i)}(0, t) & = & {\bf d}_1(t), & t \in (0, T),
 \\[2mm]
    w_1^{(i)}(\ell_i,  t) & = & 0, & t \in [0, T],   \ \ i \in \{1,\ldots,m\},
\\[2mm]
    w_1^{(i)}(x_i,  0) & = & 0, & x_i  \in [0, \ell_i],  \ \ i \in \{1,\ldots,M\},
  \end{array}\right.
\end{equation}
where
\begin{equation}\label{a_i}
  a_i(y^{(i)}_1,t) =  - \partial_{y^{(i)}_1}v_i^{(i)}(y^{(i)}_1,t)\, - \,
\partial_s \widehat{\varphi}^{(i)}\big(w^{(i)}_0(y^{(i)}_1,t), y^{(i)}_1, t\big),
\end{equation}
with coefficients  $\widehat{\varphi}^{(i)}$ and $f_i$  determined in \eqref{hat_phi} and \eqref{fun_1}, respectively.

To find a solution to this linear problem, first we consider for each $i\in \{1,\ldots,m\}$ the problem
\begin{equation}\label{prob_w_1_m}
 \left\{\begin{array}{rcll}
 \partial_t{w}^{(i)}_1 \, + \,  v_i^{(i)}\, \partial_{y^{(i)}_1} w^{(i)}_1 & =& a_i\,  {w}^{(i)}_1 \, + \, f_i  & \text{in} \  I_i \times (0, T),
 \\[2mm]
    w_1^{(i)}(\ell_i,  t) & = & 0, & t \in [0, T],
 \\[2mm]
    w_1^{(i)}(x_i,  0) & = & 0, & x_i  \in [0, \ell_i].
  \end{array}\right.
\end{equation}
Since $v_i^{(i)} < 0,$ this problem has a unique weak solution. It will be a classical solution if $f_i(0,0)=0$ holds, which  is obviously fulfilled (see \cite[\S 3.2.1]{Mel-Roh_preprint-2022}).
However, \eqref{res_2} implies that the solution ${w}^{(i)}_1$ must be $C^2$-smooth. This follows from
$\partial_t f_i(0,0)=0$, which is also obviously true.
In addition, for the solution it is possible to obtain the explicit  representation using the method of characteristics (see \eqref{Volt_1} and \eqref{Volt_2} or for more detail \cite[\S 3.2.1]{Mel-Roh_preprint-2022}). From this representation it follows that
$\partial_t w_1^{(i)}(0,0)=\partial^2_{tt} w_1^{(i)}(0,0)=0.$

The other coefficients $\{w^{(i)}_1\}_{i=m+1}^M$ are determined as solutions to the corresponding problems
\begin{equation}\label{prob_w_1_M}
 \left\{\begin{array}{c}
 \partial_t{w}^{(i)}_1 \, + \,  v_i^{(i)}\, \partial_{y^{(i)}_1} w^{(i)}_1 = a_i\,  {w}^{(i)}_1 \, + \, f_i   \ \ \text{in} \  I_i \times (0, T),
 \\[2mm]
    w_1^{(i)}(0,  t)  =    \dfrac{1}{(M-m)\, h_i^2 \, \mathrm{v}_i(t) } \Big({\bf d}_1(t) - \displaystyle{\sum_{i=1}^{m}  h_i^2 \,\mathrm{v}_i(t)  \,  w_1^{(i)}(0, t)}\Big), \ \  t \in [0, T],
 \\[2mm]
    w_1^{(i)}(x_i,  0)  =  0,  \ \ x_i  \in [0, \ell_i],
  \end{array}\right.
\end{equation}
 Inasmuch as $w_1^{(i)}(0,0)= \partial_t w_1^{(i)}(0,0)=0,$ $i\in \{1,\ldots,m\},$ there exists a unique classical solution to the problem \eqref{prob_w_1_M} if ${\bf d}_1(0)=0$ and ${\bf d}'_1(0) =0.$  It is easy to see that ${\bf d}_1(0)=0$ holds thanks to the assumption
$\varphi^{(0)}\big|_{t=0} =0$ and to \eqref{zero_t_0}. The second relation is satisfied if  $\partial_t \varphi^{(0)}\big|_{t=0} =0.$ Since the $C^2$-smoothness is needed for ${w}^{(i)}_1$, we additionally assume that
\begin{equation}\label{add_phi_0}
  \partial_t \varphi^{(0)}\big|_{t=0} = \partial^2_{tt} \varphi^{(0)}\big|_{t=0} = 0.
\end{equation}
An explicit representation of the solution to the problem \eqref{prob_w_1_M} is also possible (see \cite[\S 3.2.1]{Mel-Roh_preprint-2022}).

Consequently, the problem \eqref{prob_w_1} has a classical solution. This means that the solvability condition both for the problem \eqref{tilda_N_1_prob} and the problem  \eqref{eq_2}, \eqref{bc_2} and \eqref{uniq_1}  is satisfied.
 It is easy to verify that
\begin{equation*}%\label{zero_t_0}
  \widetilde{N}_1\big|_{t=0} = N_1\big|_{t=0} =  0, \quad \partial_t N_1\big|_{t=0} = 0, \quad u_2^{(i)}\big|_{t=0} = 0, \quad i\in \{1,\ldots,M\}.
\end{equation*}
In addition, since $\{g_1^{(i)}\}_{i=1}^M$  in the problem \eqref{tilda_N_1_prob} are uniformly bounded with respect to $(\xi, t)\in \Xi\times [0, T]$ and have the compact supports, and
$$
\partial_t {\widetilde{N}}_{0} =  \mathcal{ O}(\exp(-\beta_0\xi_i))
\quad \mbox{as} \ \ \xi_i\to+\infty,  \ \  \xi  \in \Xi^{(i)}  \quad (i=\{1,\ldots,M\},  \  \beta_0 >0),
$$
 the solution to the problem~\eqref{N_1_prob}  has  the following  asymptotics uniform with respect to $t\in [0, T]$:
\begin{equation}\label{rem_exp-decrease+1}
N_1(\xi,t) = w^{(i)}_{1}(0,t)  +  \Psi^{(i)}_{1}(\xi_i,t) + \mathcal{ O}(\exp(-\beta_0\xi_i))
\quad \mbox{as} \ \ \xi_i\to+\infty,  \ \  \xi  \in \Xi^{(i)}.
\end{equation}

%%%%%%%%%%%%%%%
%%%%%%%%%%%%%%%
%
\subsection{Boundary-layer parts of the approximation}\label{subsec_Bound_layer}

The regular ansatzes  $\{\mathcal{U}_{\varepsilon}^{(i)}\}_{i=m+1}^M$ constructed in~\S~\ref{regular_asymptotic}
don't satisfy the boundary conditions at  the bases $\{\Upsilon_{\varepsilon}^{(i)} (\ell_i)\}_{i=m+1}^M$  of the outlet cylinders. Thus, we must determine  boundary layer parts of the approximation compensating the residuals of the regular one at each base $\Upsilon_{\varepsilon}^{(i)} (\ell_i)$ $(i\in\{m+1,\ldots,M\})$  of the corresponding thin cylinders.

We additionally assume that component $v_1^{(i)}$ of the vector-valued function $\overrightarrow{V_\varepsilon}^{(i)}$ is independent of the variable $y_1^{(i)}$ in a neighborhood of $\Upsilon_{\varepsilon}^{(i)}(\ell_i),$  i.e.,
$$
\overrightarrow{V_\varepsilon}^{(i)} = \big(v_1^{(i)}(\ell_3,t), 0, 0 \big)
$$
in a neighborhood of $\Upsilon_{\varepsilon}^{(i)} (\ell_i) \  (i\in\{m+1,\ldots,M\}).$ This is a technical assumption. In the general case, the function  $v_1^{(i)}$ need to be expanded in terms of  Taylor series in a neighborhood of the point $y_1^{(i)}=\ell_i.$

Substituting  \eqref{prim+}  into the differential equation and boundary conditions  of the problem \eqref{probl} in
 a neighborhood the base $\Upsilon_{\varepsilon}^{(i)} (\ell_i)$ of the thin cylinder $\Omega_{\varepsilon}^{(i)}$ and collecting  coefficients at the same powers of~$\varepsilon$, we get the following  problems:
 \begin{equation}\label{prim+probl+0}
 \left\{\begin{array}{rcll}
    \Delta_\eta \Pi_0^{(i)}(\eta,t) +  v_1^{(i)}(\ell_3,t) \partial_{\eta_1}\Pi_0^{(i)}(\eta,t)
  & =    & 0,
   & \quad \eta\in \mathfrak{C}_+^{(i)},
   \\[2mm]
  \partial_{\nu_{\overline{\eta}_1}} \Pi_0^{(i)}(\eta,t) & =
   & 0,
   & \quad \eta\in \partial\mathfrak{C}_+^{(i)} \setminus \Upsilon^{(i)},
   \\[2mm]
  \Pi_0^{(i)}(0, \overline{\eta}_1,t) & =
   & \Phi^{(i)}_0(t),
   & \quad \overline{\eta}_1\in\Upsilon^{(i)},
   \\[2mm]
  \Pi_0^{(i)}(\eta,t) & \to
   & 0,
   & \quad \eta_1\to+\infty,
 \end{array}\right.
\end{equation}
where  $\eta=(\eta_1, \eta_2, \eta_3),$ $\eta_1 = \frac{\ell_i -y_1^{(i)}}{\varepsilon},$ $\overline{\eta}_1 =(\eta_2, \eta_3) = \frac{\overline{y}_1^{(i)}}{\varepsilon},$ $\Phi^{(i)}_0(t) := q_{i}(t) - w_{0}^{(i)}(\ell_i,t),$
$$
\Upsilon^{(i)}:=\big\{(\eta_2,\eta_3)\colon \sqrt{\eta^2_2 + \eta^2_3} < h_i\big\}, \quad
\mathfrak{C}_+^{(i)}:=\big\{\eta \colon \  \overline{\eta}_1\in\Upsilon^{(i)}, \quad \eta_1\in(0,+\infty)\big\};
$$
and
\begin{equation}\label{prim+probl+k}
 \left\{\begin{array}{rcll}
    \Delta_\eta \Pi_k^{(i)}(\eta,t) +  v_1^{(i)}(\ell_3,t) \partial_{\eta_1}\Pi_k^{(i)}(\eta,t)
  & =    & \partial_{t}\Pi_{k-1}^{(i)}(\eta,t),
   &  \eta\in \mathfrak{C}_+^{(i)},
   \\[2mm]
  \partial_{\nu_{\overline{\eta}_1}} \Pi_k^{(i)}(\eta,t) & =
   & 0,
   &  \eta\in \partial\mathfrak{C}_+^{(i)} \setminus \Upsilon^{(i)},
   \\[2mm]
  \Pi_k^{(i)}(0,\overline{\eta}_1,t) & =
   & \Phi^{(i)}_k(t) ,
   &  \overline{\eta}_1\in\Upsilon^{(i)},
   \\[2mm]
  \Pi_k^{(i)}(\eta,t) & \to
   & 0,
   &  \eta_1\to+\infty.\end{array}\right.
\end{equation}
for $k\in \{1, 2\},$ where $\Phi^{(i)}_1(t) =  - w_{1}^{(i)}(\ell_i,t)$ and $\Phi^{(i)}_2(t) =  0.$

Using  the  Fourier method, it is easy to find solutions of \eqref{prim+probl+0} and \eqref{prim+probl+k}, e.g.,
\begin{gather*}%\label{Pi_0}
\Pi_0^{(i)}(\eta_1,t) =
\Phi_0(t) \, e^{- v_1^{(i)}(\ell_3,t) \, \eta_1} ,
\\
\Pi_1^{(i)} = \left(\Phi_1(t) +  \left(\frac{{\Phi}_0 \, \partial_t v_1^{(i)}(\ell_3,t)}{(v_1^{(i)}(\ell_3,t))^2} - \frac{\partial_t {\Phi}_0}{v_1^{(i)}(\ell_3,t)} \right) \eta_1 + \frac{{\Phi}_0\, \partial_t v_1^{(i)}(\ell_3,t)}{2 v_1^{(i)}(\ell_3,t)} \, \eta_1^2\right)   e^{-v_1^{(i)}(\ell_3,t) \, \eta_1}
\end{gather*}
Since $v_1^{(i)}(\ell_3,t) \ge \theta_0 > 0$ for all $t\in [0, T]$ and the factors near $e^{-v_1^{(i)}\, \eta_1}$ are bounded with respect to $t\in [0,T],$
\begin{equation}\label{exp_decay}
  \Pi_k^{(i)}(\eta_1,t) = \mathcal{O}\big(e^{- \frac{\theta_0}{2} \eta_1}\big) \quad \text{as} \quad \eta_1 \to +\infty
\end{equation}
uniformly with respect to $t\in [0, T]$ $(k\in\{0, 1, 2\}, \ i\in\{m+1,\ldots,M\}).$ Obviously,
\begin{equation}\label{in_cond}
  \Pi_0^{(i)}\big|_{t=0} =  \Pi_1^{(i)}\big|_{t=0} = \Pi_2^{(i)}\big|_{t=0} =  0, \quad i\in\{m+1,\ldots,M\}.
\end{equation}

\subsection{Construction of the final approximation in the thin junction $\Omega_\varepsilon$\label{Sec:justification}}
%%%%%%%%%%%%%%%%%%%

We have determined all coefficients of the ansatzes \eqref{regul} - \eqref{junc}.
With the help of the smooth cut-off functions $\chi_{\ell_0}$  (see \eqref{new-solution_k}) and
\begin{equation}\label{cut-off_functions}
\chi_\delta^{(i)}(s) =
    \left\{
    \begin{array}{ll}
        1, & \text{if} \ \ s \ge \ell_i -  \delta,
    \\
        0, & \text{if} \ \ s \le \ell_i - 2\delta,
    \end{array}
    \right.
\quad i \in \{m+1,\ldots,M\},
\end{equation}
we construct the approximation function
\begin{equation}\label{first_app}
\mathfrak{A}_\varepsilon =
\left\{
  \begin{array}{ll}
   \mathcal{U}^{(i)}_\varepsilon(y^{(i)}, t) & \text{in} \  \  \Omega^{(i)}_{\varepsilon,3\ell_0,\gamma}, \ \ i\in\{1,\ldots,m\},
\\[3pt]
   \mathcal{U}^{(i)}_\varepsilon(y^{(i)}, t) + \chi_\delta^{(i)}(y^{(i)}_1) \, \mathcal{B}^{(i)}_\varepsilon(y^{(i)}, t)& \text{in} \ \
    \Omega^{(i)}_{\varepsilon,3\ell_0,\gamma}, \ \ i\in\{m+1,\ldots,M\},
    \\[3pt]
    \mathcal{N}_\varepsilon(x,t) & \text{in} \ \  \Omega^{(0)}_{\varepsilon, \gamma},
\\[3pt]
\chi_{\ell_0}\big(\frac{y^{(i)}_1}{\varepsilon^\gamma}\big)\, \mathcal{U}^{(i)}_\varepsilon +
\Big(1- \chi_{\ell_0}\big(\frac{y^{(i)}_1}{\varepsilon^\gamma}\big)\Big) \mathcal{N}_\varepsilon & \text{in} \ \
\Omega^{(i)}_{\varepsilon,2\ell_0,3\ell_0,\gamma }, \ \ i\in\{1,\ldots,M\},
  \end{array}
\right.
\end{equation}
where $t\in [0,T],$ $\gamma$ is a fixed number from $(\frac23, 1),$
 $\delta$ is a sufficiently small fixed positive number such that $\chi_\delta^{(i)}$ vanishes in the support of $\varphi_\varepsilon^{(i)}$
$(i\in\{m+1,\ldots,M\}),$ and
\begin{gather}
   \Omega^{(0)}_{\varepsilon, \gamma}:=\Omega^{(0)}_\varepsilon \bigcup \Big(\bigcup\limits_{i=1}^M \Omega^{(i)}_\varepsilon \cap \{y^{(i)}\colon y^{(i)}_1\in [\varepsilon \ell_0,  2\ell_0 \varepsilon^\gamma]\}\Big), \notag
   \\
   \Omega^{(i)}_{\varepsilon,2\ell_0,3\ell_0,\gamma }  := \Omega^{(i)}_\varepsilon \, \cap \, \big\{y^{(i)}\colon y^{(i)}_1\in [2\ell_0 \varepsilon^\gamma, 3\ell_0 \varepsilon^\gamma]\big\}, \notag
\\[2pt]
\Omega^{(i)}_{\varepsilon,3\ell_0,\gamma}  := \Omega^{(i)}_\varepsilon \, \cap\,  \big\{y^{(i)}\colon y^{(i)}_1 \in [3\ell_0 \varepsilon^\gamma, \ell_i]\big\}.  \label{note_doms}
\end{gather}

Obviously,
 $$
 \mathfrak{A}_\varepsilon\big|_{t=0} = 0 \quad \text{and} \quad \mathfrak{A}_\varepsilon\big|_{y^{(i)}_1=\ell_i} = q_i(t), \quad i\in \{1,\ldots,M\}.
 $$
In $\Omega^{(i)}_{\varepsilon,3\ell_0,\gamma}$ $( i\in\{1,\ldots,m\})$ the approximation
$\mathfrak{A}_\varepsilon$ satisfies the relations \eqref{ref_1}. For $ i\in\{m+1,\ldots,M\} $ we should additionally calculate  residuals from the boundary-layer:
\begin{multline}\label{Res_3}
    \chi_\delta^{(i)}(y^{(i)}_1) \, \partial_t\mathcal{B}^{(i)}_\varepsilon(y^{(i)}, t) -  \varepsilon\, \Delta_x \big(\chi_\delta^{(i)}(y^{(i)}_1) \, \mathcal{B}^{(i)}_\varepsilon\big) +
   v_1^{(i)}(\ell_3,t) \, \partial_{y^{(i)}_1} \big(\chi_\delta^{(i)}(y^{(i)}_1) \, \mathcal{B}^{(i)}_\varepsilon\big)
          \\
           = \varepsilon^2\, \chi_\delta^{(i)} \, \partial_t \Pi_2^{(i)}
       - \varepsilon \,  \big(\chi_\delta^{(i)}\big)^{\prime\prime}  \sum\limits_{k=0}^{2} \varepsilon^{k} \Pi_k^{(i)}
       + 2  \big(\chi_\delta^{(i)}\big)'  \sum\limits_{k=0}^{2} \varepsilon^{k} \partial_{\eta_1} \Pi_k^{(3)}
  +  v_1^{(i)}  \,  \big(\chi_\delta^{(i)}\big)'  \sum\limits_{k=0}^{2} \varepsilon^{k} \Pi_k^{(i)}.
  \end{multline}
The supports of summands in the second line of \eqref{Res_3} coincide with $\mathrm{supp}\big(\big(\chi_\delta^{(i)}\big)'\big),$ where
the functions $\{\Pi_k^{(i)}\}_{k=0}^{2}$ exponentially decay as $\varepsilon$ tends to zero. Therefore, the right-hand side of the differential equation \eqref{Res_3} has  the order $\varepsilon^2$ for $\varepsilon$ small enough.

In the  neighborhood $\Omega^{(0)}_{\varepsilon, \gamma}$ of  $\Omega^{(0)}_{\varepsilon},$
\begin{equation}\label{Res_6}
    \partial_t\,\mathcal{N}_\varepsilon -  \varepsilon\, \Delta_x  \mathcal{N}_\varepsilon +
  \mathrm{div} \big( \overrightarrow{V_\varepsilon}(x) \, \mathcal{N}_\varepsilon\big)
       = \varepsilon\,  \partial_t N_1,
 \end{equation}
 and on  the boundary condition $\Gamma^{(0)}_\varepsilon$
\begin{equation}\label{Res_7}
-   \partial_{\boldsymbol{\nu}_\varepsilon} \mathcal{N}_\varepsilon  =   \varphi^{(0)}_\varepsilon\big(\mathcal{N}_\varepsilon,x,t) + \varepsilon\, \Phi^{(0)}_\varepsilon(x, t),
 \end{equation}
where $\Phi^{(0)}_\varepsilon(x, t) = \partial_s \varphi^{(0)}_\varepsilon(\theta,x, t) N_1,$ and due to the assumption ${\bf A1}$
the support of  $\Phi^{(0)}_\varepsilon$ lies in the support of   $\varphi^{(0)}_\varepsilon$ and
\begin{equation}\label{Res_5}
  \sup_{\Gamma^{(0)}_{\varepsilon} \times (0,T)} |\Phi^{(0)}_\varepsilon(x, t)| \le C_1.
\end{equation}
 On the other part of the boundary $\partial_{\boldsymbol{\nu}_\varepsilon}\,\mathcal{N}_\varepsilon =0$ since
$\varphi^{(0)}_\varepsilon$ vanishes there and  the functions $\{\varphi^{(i)}_\varepsilon\}_{i=1}^M$ vanish
on $\Gamma^{(i)} _\varepsilon \cap \{y^{(i)}\colon y^{(i)}_1 \in [ \varepsilon \ell_0, 3 \ell_0 \varepsilon^\gamma]\},$ respectively (see ${\bf A2}).$

Owing to   Remark~\ref{r_2_1},
\begin{equation}\label{Res_8}
-   \,  \partial_{\overline{\nu}_\varepsilon} \mathfrak{A}_\varepsilon  = 0
 \end{equation}
on the cylindrical  surface of $\Omega^{(i)}_{\varepsilon,2\ell_0,3\ell_0,\gamma }.$
Based on \eqref{ref_1}, \eqref{res_2} and \eqref{Res_6}, we have
\begin{multline}\label{Res_9}
   \partial_t\, \mathfrak{A}_\varepsilon -  \varepsilon\, \Delta \mathfrak{A}_\varepsilon +
 \mathrm{v}_i \,\partial_{y^{(i)}} \mathfrak{A}_\varepsilon
         =
       - \varepsilon^2\, \chi_{\ell_0}\Big(\frac{y^{(i)}_1}{\varepsilon^\gamma}\Big) \Big( w^{(i)}_{1}(y^{(i)}_1, t)  \Big)^{\prime\prime}
             +  \varepsilon \Big(1 -    \chi_{\ell_0}\Big(\frac{y^{(i)}_1}{\varepsilon^\gamma}\Big)\Big) \partial_t N_1
      \\
      -  \chi_{\ell_0}'' \sum\limits_{k=0}^{1} \varepsilon^{k+1 -2\gamma}    \Big(w_k^{(i)} (y^{(i)}_1, t) - N_k\Big)
      - 2  \chi_{\ell_0}' \sum\limits_{k=0}^{1} \varepsilon^{k+1 -\gamma}  \Big(\partial_{y^{(i)}_1} w_k^{(i)} (y^{(i)}_1, t) - \varepsilon^{-1} \partial_{\xi_1^{(i)}} N_k\Big)
      \\
      + \mathrm{v}_i \, \chi_{\ell_0}' \sum\limits_{k=0}^{1} \varepsilon^{k -\gamma}   \Big(w_k^{(i)} (y^{(i)}_1, t) - N_k\Big) \quad \text{in} \ \ \Omega^{(i)}_{\varepsilon,2\ell_0,3\ell_0,\gamma }.
      \end{multline}
Summands in the second and third lines of \eqref{Res_9} are localized in
the support of  $\big(\chi_{\ell_0}\big)'.$  Therefore, using  the Taylor formula  for the functions $w_{0}^{(i)}$ and $w_{1}^{(i)}$ at the point $y^{(i)}_1=0$ and the formula \eqref{new-solution_k}, and taking into account  \eqref{rem_exp-decrease+0} and \eqref{rem_exp-decrease+1}, these summands can be rewritten as follows
$$
  \chi_{\ell_0}'' \sum\limits_{k=0}^{1} \varepsilon^{k+1 -2\gamma}    \widetilde{N}_k\Big(\frac{x}{\varepsilon},t\Big)   + 2 \chi_{\ell_0}' \sum\limits_{k=0}^{1} \varepsilon^{k -\gamma} \,  \partial_{\xi_i}\widetilde{N}_k(\xi,t)\big|_{\xi=\frac{x}{\varepsilon}}
    - \mathrm{v}_i \, \chi_{\ell_0}' \sum\limits_{k=0}^{1} \varepsilon^{k -\gamma}   \widetilde{N}_k\Big(\frac{x}{\varepsilon},t\Big) + \mathcal{O}(\varepsilon^{{\gamma}}) \quad \text{as} \ \ \varepsilon \to 0.
$$
The  maximum of $|\widetilde{N}_k|$ and $|\partial_{\xi_i}\widetilde{N}_k|$ over
$\Omega^{(i)}_\varepsilon \cap \big\{y^{(i)}\colon y^{(i)}_1\in [2\ell_0 \varepsilon^\gamma, 3\ell_0 \varepsilon^\gamma]\big\}\times [0, T]$
(see \eqref{rem_exp-decrease+0} and \eqref{rem_exp-decrease+1})
are
 of the order $\exp\big(-\beta_0 2 \ell_0 \, \varepsilon^{\gamma -1}\big),$ i.e.,
these terms exponentially decrease as  $\varepsilon$ tends to zero.  Thus,  the right-hand side of \eqref{Res_9} has the order  $\varepsilon^{{\gamma}}.$

Summing-up  calculations in \S~\ref{regular_asymptotic} and in this one, we get the statement.

\begin{lemma}\label{Prop-3-1} There is a positive number $\varepsilon_0$ such that for all $\varepsilon\in (0, \varepsilon_0)$  the difference between the approximation \eqref{first_app} and the solution to the problem \eqref{probl_rewrite} satisfies the following relations for all $t\in (0,T):$
\begin{equation}\label{dif_1}
  \partial_t(\mathfrak{A}_\varepsilon - u_\varepsilon) -  \varepsilon\, \Delta_x \big(\mathfrak{A}_\varepsilon - u_\varepsilon\big) +
   \mathrm{div} \big( \overrightarrow{V_\varepsilon}(x) \, (\mathfrak{A}_\varepsilon - u_\varepsilon)\big)
    =  \varepsilon\,  \mathcal{R}^{(0)}_\varepsilon \quad  \text{in} \ \ \Omega_{\varepsilon}^{(0)},
\end{equation}
\begin{equation}\label{dif_2}
 \partial_t(\mathfrak{A}_\varepsilon - u_\varepsilon) -  \varepsilon\, \Delta_{y^{(i)}} \big(\mathfrak{A}_\varepsilon - u_\varepsilon\big) +
   \mathrm{v}_i \,\partial_{y^{(i)}}(\mathfrak{A}_\varepsilon - u_\varepsilon)  =  \varepsilon^{{\gamma}}\,  \mathcal{R}^{(i)}_{\varepsilon,\ell_0}  \quad
    \text{in} \ \ \Omega_\varepsilon^{(i)}\setminus \Omega^{(i)}_{\varepsilon,3\ell_0,\gamma},
\end{equation}
\begin{equation}\label{dif_3}
  \partial_t(\mathfrak{A}_\varepsilon - u_\varepsilon)   -  \varepsilon\, \Delta_{y^{(i)}} (\mathfrak{A}_\varepsilon - u_\varepsilon)\big) +
  \mathrm{div}_{y^{(i)}} \big( \overrightarrow{V_\varepsilon}^{(i)} \, (\mathfrak{A}_\varepsilon - u_\varepsilon)\big)
   =  \varepsilon^2 \, \mathcal{R}^{(i)}_\varepsilon \quad  \text{in} \ \ \Omega^{(i)}_{\varepsilon,3\ell_0,\gamma},
\end{equation}
\begin{equation}\label{bc_1+}
 -   \partial_{\boldsymbol{\nu}_\varepsilon}\big(\mathfrak{A}_\varepsilon - u_\varepsilon\big)  -   \varphi^{(0)}_\varepsilon\big(\mathfrak{A}_\varepsilon,x,t)  + \varphi^{(0)}_\varepsilon\big(u_\varepsilon,x,t) =  \varepsilon\, \Phi^{(0)}_\varepsilon  \quad
    \text{on} \ \ \Gamma_\varepsilon^{(0)},
\end{equation}
\begin{equation} % double label to be checked \label{bc_2}
  -   \partial_{\boldsymbol{\nu}_\varepsilon}\big(\mathfrak{A}_\varepsilon - u_\varepsilon\big)  =  0  \quad
    \text{on} \ \ \Gamma_\varepsilon^{(i)}, \ \ y^{(i)}_1\in [\ell_0 \varepsilon, 3\ell_0 \varepsilon^\gamma],
\end{equation}
\begin{equation}\label{bc_3}
 -    \partial_{\overline{\nu}_\varepsilon}(\mathfrak{A}_\varepsilon - u_\varepsilon) +  (\mathfrak{A}_\varepsilon - u_\varepsilon) \, \overline{V}^{(i)}_\varepsilon\boldsymbol{\cdot}\overline{\nu}_\varepsilon
 -   \varphi^{(i)}_\varepsilon\big(\mathfrak{A}_\varepsilon, y^{(i)},t)  + \varphi^{(i)}_\varepsilon\big(u_\varepsilon, y^{(i)},t)
  =  \varepsilon^{2} \Phi_\varepsilon^{(i)}
\end{equation}
on $ \Gamma_\varepsilon^{(i)}, \ \ y^{(i)}_1\in [3\ell_0 \varepsilon^\gamma, \ell_i],$
\begin{equation*}
 (\mathfrak{A}_\varepsilon - u_\varepsilon)\big|_{x_i= \ell_i}
 = 0 \quad  \text{on} \  \ \Upsilon_{\varepsilon}^{(i)} (\ell_i),
\  i\in\{1,\ldots,M\}, \qquad  (\mathfrak{A}_\varepsilon - u_\varepsilon)\big|_{t=0}
  = 0 \quad \text{on} \ \ \Omega_{\varepsilon},
\end{equation*}
where  the vector-function $\overline{V}^{(i)}_\varepsilon$ is defined in \eqref{V_i},
\begin{gather}\label{Res_10}
  \sup_{(\Omega^{(i)} _{\varepsilon} \setminus \Omega^{(i)}_{\varepsilon,3\ell_0,\gamma}) \times (0,T)} |\mathcal{R}^{(i)}_{\varepsilon,\ell_0}| +
  \sup_{\Omega^{(i)} _{\varepsilon}\times (0,T)} |\mathcal{R}^{(i)}_\varepsilon| \le C_i, \quad i\in\{1,\ldots,M\},
\\ \label{Res_11}
\sup_{\Omega^{(0)} _{\varepsilon}\times (0,T)} |\mathcal{R}^{(0)}_\varepsilon| \le C_0, \quad  \sup_{\Gamma^{(i)}_{\varepsilon} \times (0,T)} |\Phi_\varepsilon^{(i)}| \le \tilde{C}_i, \quad i\in\{0,\ldots,M\},
\end{gather}
and the support of $\{\Phi_\varepsilon^{(i)}\}_{i=1}^M$  with respect the variable $y^{(i)}_1$  lies in $(\varepsilon \ell_0, \ell_i)$ uniformly in $t\in [0, T]$.
\end{lemma}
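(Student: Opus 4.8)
\emph{Proof strategy.} The plan is to verify the listed identities for $\mathfrak{A}_\varepsilon-u_\varepsilon$ region by region, reusing the residual computations already carried out in \S\ref{regular_asymptotic}--\S\ref{Sec:justification}, and then to collect the uniform bounds. Since $u_\varepsilon$ solves \eqref{probl_rewrite} exactly, each relation for $\mathfrak{A}_\varepsilon-u_\varepsilon$ follows by subtracting the corresponding exact equation from the (approximate) equation satisfied by $\mathfrak{A}_\varepsilon$ on that subdomain. The only new point compared with the linear case in \cite{Mel-Roh_preprint-2022} is the handling of the semilinear boundary terms: I would keep the difference $\varphi^{(i)}_\varepsilon(\mathfrak{A}_\varepsilon,\cdot)-\varphi^{(i)}_\varepsilon(u_\varepsilon,\cdot)$ unexpanded on the left-hand sides of \eqref{bc_1+} and \eqref{bc_3}, and move to the right-hand side only the error committed when $\varphi^{(i)}$ is evaluated at the leading coefficient $w_0^{(i)}$ rather than at $\mathfrak{A}_\varepsilon$ — precisely the terms $\Phi^{(i)}_\varepsilon$ from \eqref{res_3} (resp.\ $\Phi^{(0)}_\varepsilon$ from \eqref{Res_7}), whose boundedness and support properties are already established in \eqref{Res_5} and Remark~\ref{r_2_1}.

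First, on the interior part $\Omega^{(i)}_{\varepsilon,3\ell_0,\gamma}$ of an inlet cylinder ($i\le m$) one has $\mathfrak{A}_\varepsilon=\mathcal{U}^{(i)}_\varepsilon$, so \eqref{ref_1} together with \eqref{res_2}--\eqref{res_3} gives \eqref{dif_3} and \eqref{bc_3} with $\mathcal{R}^{(i)}_\varepsilon=\mathfrak{F}^{(i)}_\varepsilon$ and $\Phi^{(i)}_\varepsilon$ as in \eqref{res_3}. For an outlet cylinder ($i>m$) the extra summand $\chi^{(i)}_\delta\mathcal{B}^{(i)}_\varepsilon$ contributes the residual \eqref{Res_3}, whose right-hand side is $\mathcal{O}(\varepsilon^2)$ because $(\chi^{(i)}_\delta)'$ is supported where $\{\Pi^{(i)}_k\}$ decay exponentially by \eqref{exp_decay}; since $\chi^{(i)}_\delta$ vanishes on the support of $\varphi^{(i)}_\varepsilon$, relation \eqref{bc_3} is unchanged, and by construction $\mathcal{B}^{(i)}_\varepsilon$ restores the exact Dirichlet datum at $y^{(i)}_1=\ell_i$, whence $\mathfrak{A}_\varepsilon|_{y^{(i)}_1=\ell_i}=q_i(t)$ (using also that $u^{(i)}_1,u^{(i)}_2$ vanish near $\ell_i$ by Remark~\ref{r_2_1}). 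On $\Omega^{(0)}_{\varepsilon,\gamma}$ one has $\mathfrak{A}_\varepsilon=\mathcal{N}_\varepsilon$, and \eqref{Res_6}--\eqref{Res_5} yield \eqref{dif_1} and \eqref{bc_1+} with $\mathcal{R}^{(0)}_\varepsilon=\partial_t N_1$; on the remaining part of $\Gamma^{(i)}_\varepsilon$ with $y^{(i)}_1\in[\varepsilon\ell_0,3\ell_0\varepsilon^\gamma]$ the normal-derivative relation holds with zero right-hand side because both $\varphi^{(0)}_\varepsilon$ and $\varphi^{(i)}_\varepsilon$ vanish there by \textbf{A1}, \textbf{A2}.

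The genuinely delicate region is the gluing zone $\Omega^{(i)}_{\varepsilon,2\ell_0,3\ell_0,\gamma}$, where $\mathfrak{A}_\varepsilon$ is the $\chi_{\ell_0}(\cdot/\varepsilon^\gamma)$-convex combination of $\mathcal{U}^{(i)}_\varepsilon$ and $\mathcal{N}_\varepsilon$. Here I would insert the decomposition \eqref{new-solution_k} into \eqref{Res_9} and Taylor-expand $w^{(i)}_0,w^{(i)}_1$ at $y^{(i)}_1=0$ in the commutator terms; the polynomial parts cancel against the $\Psi^{(i)}_k$ pieces of $N_k$ by the matching construction \eqref{Psi_1}, leaving only the $\widetilde N_k$ contributions, which by \eqref{rem_exp-decrease+0} and \eqref{rem_exp-decrease+1} are of size $\exp(-2\beta_0\ell_0\varepsilon^{\gamma-1})$ on this annulus and hence negligible. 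What survives is $\mathcal{O}(\varepsilon^\gamma)$ — the worst residual exponent in the whole construction, since $\gamma<1$ — which gives \eqref{dif_2} and the bound \eqref{Res_10}. I expect the bookkeeping in this zone to be the main obstacle: one must check that the mismatch between the Taylor polynomials of the regular coefficients and the growing parts $\Psi^{(i)}_k$ of the node solution cancels exactly, so that only exponentially small and $\mathcal{O}(\varepsilon^\gamma)$ terms remain.

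Finally, continuity of $\mathfrak{A}_\varepsilon$ across the interfaces of the partition \eqref{first_app} is automatic from the cut-offs ($\chi_{\ell_0}=1$ at $y^{(i)}_1/\varepsilon^\gamma=3\ell_0$, $\chi_{\ell_0}=0$ at $2\ell_0$, and $\chi^{(i)}_\delta=0$ near $3\ell_0\varepsilon^\gamma$), so $\mathfrak{A}_\varepsilon-u_\varepsilon$ is a legitimate test function for the later estimates; the vanishing of $\mathfrak{A}_\varepsilon$ at $t=0$ follows from \eqref{zero_t_0}, \eqref{in_cond} and the analogous vanishing of $w^{(i)}_1$, $u^{(i)}_2$, $N_1$ established in \S\ref{par_222}; and the sup-bounds \eqref{Res_10}--\eqref{Res_11} come from the $C^2$-in-$t$, $C^4$-in-$y^{(i)}_1$ regularity of the regular coefficients (Remark~\ref{add_assumptios}), the boundedness of $\varphi^{(i)}$ with its derivatives (\textbf{A1}, \textbf{A2}), and the exponential decay of the node- and boundary-layer terms, while the support properties of $\{\Phi^{(i)}_\varepsilon\}$ are inherited from those of $\varphi^{(i)}$ via Remark~\ref{r_2_1}. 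Assembling these pieces and choosing $\varepsilon_0$ so small that $2\ell_0\varepsilon^\gamma<\min_i\delta_i$ and the various cut-off overlaps are disjoint from the relevant supports yields the statement.
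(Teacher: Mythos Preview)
Your proposal is correct and follows essentially the same route as the paper: the lemma is stated as a summary of the residual calculations already performed in \S\ref{regular_asymptotic}--\S\ref{Sec:justification}, and you have reproduced that structure region by region --- \eqref{ref_1}--\eqref{res_3} on $\Omega^{(i)}_{\varepsilon,3\ell_0,\gamma}$, \eqref{Res_3} for the boundary-layer correction, \eqref{Res_6}--\eqref{Res_7} on $\Omega^{(0)}_{\varepsilon,\gamma}$, and the Taylor-plus-exponential-decay argument after \eqref{Res_9} in the gluing zone --- with the semilinear boundary differences handled exactly as in the paper. The only point worth noting is that the paper later (in the proof of Theorem~\ref{Th_1}) splits $\Omega^{(i)}_\varepsilon\setminus\Omega^{(i)}_{\varepsilon,3\ell_0,\gamma}$ into the subregion $y^{(i)}_1\in[\varepsilon\ell_0,2\ell_0\varepsilon^\gamma]$ (where the residual is actually $\mathcal{O}(\varepsilon)$) and the gluing zone (where it is $\mathcal{O}(\varepsilon^\gamma)$); in the lemma these are merged under the common bound $\varepsilon^\gamma$, which your account also reflects.
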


\begin{remark}
 In \eqref{Res_10} and \eqref{Res_11} onwards, all constants in inequalities are independent of the parameter~$\varepsilon.$
\end{remark}

%%%%%%%%%%%%%%%

%%%%%%%%%%%%%%%
%
\subsection{The main results}\label{A priori estimates}

Here we prove asymptotic estimates using  maximum principles for solutions of the first and second initial-boundary value problems \cite[Ch. 1, \S 2]{Lad_Sol_Ura_1968}. The novelty lies in finding out how the constants in this estimates will depend on the small parameter $\varepsilon$  and choosing special comparison functions that help to do this. Similar estimates for linear problems were established in our paper \cite{Mel-Roh_preprint-2022}.

\begin{theorem}\label{Th_1} Let the assumptions made in Section~\ref{Sec:Statement}, in Remark~\ref{add_assumptios} and in \eqref{add_phi_0} be satisfied.
Let the  approximation function  $\mathfrak{A}_\varepsilon$ from \eqref{first_app} and the unique solution $u_\varepsilon$ of  the problem~\eqref{probl_rewrite} be given.

Then,  there is a positive number $\varepsilon_0$ such that for all $\varepsilon\in (0, \varepsilon_0)$ the difference $U_\varepsilon := \mathfrak{A}_\varepsilon - u_\varepsilon,$  satisfies the estimate
  \begin{equation}\label{max_1}
  \max_{\overline{\Omega_\varepsilon}\times [0, T]} |U_\varepsilon|  \le C_0 \, \varepsilon^{{\gamma}}.
\end{equation}
 In \eqref{max_1},  the constant $C_0$ depends  on  $T$  nad the constants in \eqref{Res_10} and  \eqref{Res_11}, but not on $\varepsilon.$ Moreover, $\gamma$ is a fixed number from the interval $(\frac23, 1).$
\end{theorem}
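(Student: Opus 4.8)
\medskip
\noindent\emph{Proof sketch (plan).}
The plan is to derive \eqref{max_1} by a comparison‑function argument based on the maximum principles for linear parabolic problems of the first and second kind \cite[Ch.~1,\S2]{Lad_Sol_Ura_1968}, applied to the system for $U_\varepsilon=\mathfrak A_\varepsilon-u_\varepsilon$ supplied by Lemma~\ref{Prop-3-1}. First I would linearise the boundary nonlinearities: by the mean value theorem $\varphi^{(i)}_\varepsilon(\mathfrak A_\varepsilon,\cdot)-\varphi^{(i)}_\varepsilon(u_\varepsilon,\cdot)=\partial_s\varphi^{(i)}_\varepsilon(\theta^{(i)}_\varepsilon,\cdot)\,U_\varepsilon$ with $\theta^{(i)}_\varepsilon$ between $\mathfrak A_\varepsilon$ and $u_\varepsilon$, and by {\bf A1}--{\bf A2} the multipliers $\partial_s\varphi^{(i)}_\varepsilon$ are bounded by a constant independent of $\varepsilon$. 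Thus $U_\varepsilon$ solves a linear convection--diffusion initial--boundary value problem in $\Omega_\varepsilon\times(0,T)$ with diffusion coefficient $\varepsilon$, convective field $\overrightarrow{V_\varepsilon}$, a bounded zeroth‑order coefficient coming from $\operatorname{div}\overrightarrow{V_\varepsilon}$, a boundary operator of the form $-\partial_{\nu_\varepsilon}\,\cdot\,+\,\beta_\varepsilon\,\cdot$ on $\Gamma^{(0)}_\varepsilon\cup\bigcup_i\Gamma^{(i)}_\varepsilon$ with $|\beta_\varepsilon|$ bounded uniformly in $\varepsilon$, homogeneous data on $\{t=0\}$ and on $\bigcup_i\Upsilon^{(i)}_\varepsilon(\ell_i)$, volume right‑hand sides bounded in modulus by $C\varepsilon^{\gamma}$ everywhere (because $\varepsilon^{2}\le\varepsilon\le\varepsilon^{\gamma}$ for $\gamma<1$, $\varepsilon<1$, using \eqref{dif_1}--\eqref{dif_3} and \eqref{Res_10}--\eqref{Res_11}), and boundary residuals bounded by $C\varepsilon$ on $\Gamma^{(0)}_\varepsilon$, by $C\varepsilon^{2}$ on the portion of $\Gamma^{(i)}_\varepsilon$ with $y^{(i)}_1\ge 3\ell_0\varepsilon^{\gamma}$, and vanishing on the remaining part of $\Gamma^{(i)}_\varepsilon$ (where in fact $\beta_\varepsilon\equiv 0$ as well).

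Next I would build a nonnegative comparison function $W_\varepsilon$ with $W_\varepsilon=O(\varepsilon^{\gamma})$ on $\overline{\Omega_\varepsilon}\times[0,T]$ which is a supersolution for both $U_\varepsilon$ and $-U_\varepsilon$, so that the maximum principle gives $|U_\varepsilon|\le W_\varepsilon$. To neutralise the lower‑order terms of indefinite sign I first substitute $U_\varepsilon=e^{\lambda t}Z_\varepsilon$ with $\lambda$ exceeding the uniform bound on $\operatorname{div}\overrightarrow{V_\varepsilon}$, which renders the effective zeroth‑order coefficient $\ge 1$ while keeping the data homogeneous and the right‑hand sides $O(\varepsilon^{\gamma})$; a convenient further device is a gauge change $Z_\varepsilon=e^{\phi_\varepsilon}\widetilde Z_\varepsilon$ with a smooth $\phi_\varepsilon$ supported in an $O(\varepsilon)$‑neighbourhood of $\Gamma^{(0)}_\varepsilon\cup\bigcup_i\Gamma^{(i)}_\varepsilon$ and $\partial_{\nu_\varepsilon}\phi_\varepsilon=\beta_\varepsilon$ there, which turns the boundary operator into a pure Neumann one $-\partial_{\nu_\varepsilon}\widetilde Z_\varepsilon$ (with residuals still of order $\varepsilon$ on $\Gamma^{(0)}_\varepsilon$ and $\varepsilon^{2}$ on the curved walls) and perturbs the interior operator only by a bounded zeroth‑order term (absorbed into $\lambda$) and a first‑order term of order $\varepsilon$. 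The barrier is then assembled with the same block structure and cut‑offs as $\mathfrak A_\varepsilon$ in \eqref{first_app}, from: a leading term $K\varepsilon^{\gamma}$ which, thanks to the now positive zeroth‑order coefficient, dominates every volume right‑hand side with a constant free of $\varepsilon$ --- no inverse power of $\varepsilon$ is needed, precisely because coercivity is provided by the convective/zeroth‑order part and not by the vanishing diffusion; inside each thin cylinder an addend $b\,\varepsilon^{\gamma-1}|\overline y^{(i)}_1|^{2}$ (itself of size $O(\varepsilon^{\gamma+1})$), whose outward normal derivative on $\Gamma^{(i)}_\varepsilon$ equals $2bh_i\varepsilon^{\gamma}$ and therefore dominates the $O(\varepsilon^{2})$ lateral residual, while it perturbs the volume operator only by $-4b\varepsilon^{\gamma}+O(\varepsilon^{\gamma+1})$ (harmless once $K$ is large relative to $b$); and, near the node, an addend $c\,\varepsilon^{2}\psi_0(x/\varepsilon)$ with a fixed smooth bounded $\psi_0$ on $\overline{\Xi^{(0)}}$ chosen with $\partial_{\nu_\xi}\psi_0\ge 1$ on $\Gamma_0$ and $\partial_{\nu_\xi}\psi_0=0$ on $\Gamma_i$ (such $\psi_0$ exists since $\Gamma_0$ is a smooth compact portion of $\partial\Xi^{(0)}$), which yields an outward normal derivative of order $\varepsilon$ on $\Gamma^{(0)}_\varepsilon$ dominating the $O(\varepsilon)$ residual there and contributes only $O(\varepsilon)$ to the volume operator. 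One checks that $W_\varepsilon\ge 0$ everywhere and $W_\varepsilon=K\varepsilon^{\gamma}(1+o(1))$; at the cut‑off interfaces the terms produced by differentiating the cut‑offs are negligible because the node and boundary layers decay exponentially (cf.\ \eqref{exp_decay}, \eqref{rem_exp-decrease+0}, \eqref{rem_exp-decrease+1}), exactly as in the proof of Lemma~\ref{Prop-3-1}.

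Granting these verifications, the maximum principle for the mixed first/second initial--boundary value problem gives $|\widetilde Z_\varepsilon|\le W_\varepsilon\le C\varepsilon^{\gamma}$ on $\overline{\Omega_\varepsilon}\times[0,T]$, whence $\max_{\overline{\Omega_\varepsilon}\times[0,T]}|U_\varepsilon|\le e^{\lambda T}\bigl(1+O(\varepsilon)\bigr)C\varepsilon^{\gamma}=:C_0\varepsilon^{\gamma}$, which is \eqref{max_1}, with $C_0$ depending only on $T$, $\lambda$ and the constants in \eqref{Res_10}--\eqref{Res_11}. I expect the construction of $W_\varepsilon$ to be the main obstacle: since the diffusion coefficient is the small parameter $\varepsilon$, the textbook barriers for $-\varepsilon\Delta$ carry a factor $1/\varepsilon$ and would only deliver $|U_\varepsilon|=O(\varepsilon^{\gamma-1})$, which does not even tend to zero for $\gamma\in(\tfrac23,1)$; the estimate with an $\varepsilon$‑independent constant becomes possible only because the transport term, with its definite direction on each cylinder \eqref{V_2} and its balanced incompressible flux through the node \eqref{cond_1},\eqref{field_0}, supplies the required coercivity, while the $O(\varepsilon^{\gamma})$, $O(\varepsilon^{2})$ and $O(\varepsilon)$ corrections serve only to fix the signs and sizes of the normal derivatives on the thin curved walls and on $\Gamma^{(0)}_\varepsilon$. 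Checking that a single $W_\varepsilon$ does all of this simultaneously across the node, the $O(\varepsilon^{\gamma})$‑wide transition collars and the $O(1)$‑long cylinder interiors --- where the volume and boundary residuals change by the three different orders $\varepsilon$, $\varepsilon^{\gamma}$ and $\varepsilon^{2}$ --- is the delicate bookkeeping the proof must carry through.
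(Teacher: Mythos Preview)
Your proposal and the paper's proof share the same two key devices: the exponential weight $e^{-\lambda t}$ (to create a positive zeroth-order coefficient and thereby avoid the $1/\varepsilon$ loss) and a multiplicative correction that shifts the possibly sign-indefinite Robin coefficient to a positive one. Where you diverge is in the implementation. You aim to build a single global supersolution $W_\varepsilon$, assembled from a constant $K\varepsilon^\gamma$, transverse quadratics in the cylinders, and a node term, after a gauge change $Z_\varepsilon=e^{\phi_\varepsilon}\widetilde Z_\varepsilon$ that converts Robin to Neumann. The paper does \emph{not} build a barrier at all: it argues directly on the location of the maximum of $\Psi_\varepsilon=U_\varepsilon e^{-\lambda t}$. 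If the max is interior, the relations $\partial_t\Psi_\varepsilon\ge0$, $\nabla\Psi_\varepsilon=0$, $-\Delta\Psi_\varepsilon\ge0$ at that point, inserted into the PDE, give $\Psi_\varepsilon\le C\varepsilon^\gamma$ in one line. If the max is on a lateral wall $\Gamma^{(i)}_\varepsilon$, one divides by a \emph{local} explicit quadratic $z_\varepsilon(\bar y^{(i)})=1+\tfrac{\varepsilon\varpi_i h_i}{2}\bigl(1-|\bar y^{(i)}|^2/(\varepsilon h_i)^2\bigr)$ (so $z_\varepsilon=1$ and $-\partial_{\bar\nu}z_\varepsilon=\varpi_i$ on $\Gamma^{(i)}_\varepsilon$), which makes the boundary coefficient $\ge1$ and the max of $\Psi_\varepsilon/z_\varepsilon$ still at the same boundary point; then $\partial_\nu(\Psi_\varepsilon/z_\varepsilon)\ge0$ at that point yields the bound from the boundary residual. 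On $\Gamma^{(0)}_\varepsilon$ the same trick is applied with a ball centred in the node so that the boundary max lies on its sphere.

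This buys the paper a much shorter argument: no gauge function $\phi_\varepsilon$ has to be constructed globally, no compatibility of the barrier across the three regimes (node, $O(\varepsilon^\gamma)$ collars, cylinder interiors) has to be checked, and the sign issues you flag in your last paragraph simply disappear because one only ever works at a single point. Your route can in principle be carried through, but the step you yourself call ``delicate bookkeeping'' is precisely what the paper sidesteps; in particular, your global gauge change requires $\phi_\varepsilon$ with $\partial_\nu\phi_\varepsilon=\beta_\varepsilon$ on a boundary whose coefficient $\beta_\varepsilon$ varies along $y_1^{(i)}$, and the assembly of $W_\varepsilon$ across the cut-off zones must be made to survive differentiation of the cut-offs without spoiling the supersolution inequality --- both doable, but neither done in the sketch. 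If you revise, I would recommend replacing the barrier construction by the paper's max-point argument: it is shorter and uses exactly the pieces you already identified.
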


\begin{proof}
{\bf 1.} First, let us introduce a new function $\Psi_\varepsilon = U_\varepsilon  \, e^{-\lambda t},$ where the   constant
\begin{equation}\label{e_7}
  \lambda = 1+ \max_{i\in \{1,\ldots,M\}} \Big( \max_{[0, \ell_i]\times [0,T]}
 \Big|\frac{\partial v_1^{(i)}}{\partial y^{(i)}_1}\Big| + \max_{\mathfrak{X}_i} \Big| \mathrm{div}_{\bar{\xi}_1}  \overline{V}^{(i)}(y^{(i)}_1, \overline{\xi}_1, t)\Big|\,
    \Big) \ge 1,
\end{equation}
and the vector-valued function $\overline{V}^{(i)}$ with the domain $\mathfrak{X}_i= \{y^{(i)}_1 \in [0, \ell_i], \  |\overline{\xi}_1| \le h_i, \  t\in[0, T]\}$ is defined in \eqref{overline_V}.

From \eqref{dif_1} -- \eqref{dif_3} it follows that $\Psi_\varepsilon$ satisfies for all $t\in (0, T)$  the differential equations
\begin{equation}\label{eq_psi1}
  \partial_t \Psi_\varepsilon -  \varepsilon\, \Delta \Psi_\varepsilon +
   \overrightarrow{V_\varepsilon}^{(0)} \cdot \nabla \Psi_\varepsilon + \lambda \Psi_\varepsilon  =  \varepsilon\, e^{-\lambda t}\,  \mathcal{R}^{(0)}_\varepsilon  \quad  \text{in} \ \ \Omega_\varepsilon^{(0)},
\end{equation}
\begin{equation}\label{eq_psi2-}
\partial_t \Psi_\varepsilon -  \varepsilon\, \Delta\Psi_\varepsilon +
   \mathrm{v}_i \,\partial_{y^{(i)}_1}\Psi_\varepsilon  + \lambda \Psi_\varepsilon =  \varepsilon\,  e^{-\lambda t}\, \mathcal{R}^{(i)}_{\varepsilon,\ell_0}  \quad     \text{in} \ \  \Omega^{(i)}_\varepsilon \cap \{y^{(i)}\colon y^{(i)}_1\in [\varepsilon \ell_0,  2\ell_0 \varepsilon^\gamma]\},
\end{equation}
\begin{equation}\label{eq_psi2}
\partial_t \Psi_\varepsilon -  \varepsilon\, \Delta\Psi_\varepsilon +
   \mathrm{v}_i \,\partial_{y^{(i)}_1}\Psi_\varepsilon  + \lambda \Psi_\varepsilon =  \varepsilon^{{\gamma}}\,  e^{-\lambda t}\, \mathcal{R}^{(i)}_{\varepsilon,\ell_0}  \quad     \text{in} \ \  \Omega^{(i)}_{\varepsilon,2\ell_0,3\ell_0,\gamma },
\end{equation}
\begin{equation}\label{eq_psi3}
  \partial_t\Psi_\varepsilon   -  \varepsilon\, \Delta\Psi_\varepsilon +
  \overrightarrow{V_\varepsilon}^{(i)}\cdot \nabla \Psi_\varepsilon
  +\Big( \partial_{y^{(i)}_1} v_1^{(i)} + \mathrm{div}_{\bar{\xi}_1}  \overline{V}^{(i)} +\lambda\Big) \Psi_\varepsilon =  \varepsilon^2 \, e^{-\lambda t}\,  \mathcal{R}^{(i)}_\varepsilon \quad  \text{in} \ \ \Omega^{(i)}_{\varepsilon,3\ell_0,\gamma},
\end{equation}

Consider any $t_1 \in (0, T).$ Three cases are possible: $a) \  \max_{\overline{\Omega_\varepsilon}\times [0, t_1]} \Psi_\varepsilon  \le  0,$
$$
  b) \  \ 0 < \max_{\overline{\Omega_\varepsilon}\times [0, t_1]} \Psi_\varepsilon  \le \max_{\overline{\Gamma_\varepsilon}\times (0, t_1]} \Psi_\varepsilon, \quad
 c) \ \ 0 < \max_{\overline{\Omega_\varepsilon}\times [0, t_1]} \Psi_\varepsilon  \le \Psi_\varepsilon(x^0, t_0),
 $$
 where $\Gamma_\varepsilon = \cup_{i=0}^M \Gamma_\varepsilon^{(i)}$   and the point $P_0 = (x^0, t_0)\in \Omega_\varepsilon \times (0, t_1].$

 In the third case, when the positive maximum of  $\Psi_\varepsilon$ is  reached at the point  $P_0$, the relations
\begin{equation}\label{e_5}
  \partial_t \Psi_\varepsilon \ge 0, \qquad \nabla \Psi_\varepsilon = \vec{0}, \qquad - \Delta  \Psi_\varepsilon \ge 0
\end{equation}
are satisfied at $P_0.$ Therefore, it follows from \eqref{e_7} -- \eqref{eq_psi3}  that
\begin{equation}\label{e_6}
   \Psi_\varepsilon\big|_{P_0}  \le   \varepsilon^{{\gamma}} \, \Big(\sum_{i=0}^{M}  \max_{\overline{\Omega^{(i)}_\varepsilon}\times [0,T]} |\mathcal{R}^{(i)}_\varepsilon|
   + \sum_{i=1}^{M} \max_{\overline{\Omega^{(i)}_\varepsilon}\times [0,T]} |\mathcal{R}^{(i)}_{\varepsilon,\ell_0}|\Big) .
\end{equation}
Similarly, we consider  the point of the smallest non-positive value of the function $\Psi_\varepsilon,$ which is reached  at a point in
$\Omega_\varepsilon \times (0, t_1].$ As a result, taking \eqref{Res_10} into account, we get the estimate
 \begin{equation}\label{e_8}
   \max_{\overline{\Omega_\varepsilon}\times [0, T]} |U_\varepsilon|  \le  e^{\lambda T} \, C \, \varepsilon^{{\gamma}}.
 \end{equation}

{\bf 2.} Now consider the case $b)$, i.e., the function $\Psi_\varepsilon$ takes the largest positive value at a  point $P_1$ belonging to $\Gamma_\varepsilon \times (0, t_1].$ Using the mean value theorem, the boundary conditions \eqref{bc_1+} and \eqref{bc_3} for $\Psi_\varepsilon$ can be rewritten in the form
\begin{equation}\label{eq_4}
   \partial_{\boldsymbol{\nu}_\varepsilon}\Psi_\varepsilon  +  \partial_s \varphi^{(0)}_\varepsilon(\theta_0,x,t) \, \Psi_\varepsilon = - \varepsilon\, e^{-\lambda t}\, \Phi^{(0)}_\varepsilon  \quad
    \text{on} \ \ \Gamma_\varepsilon^{(0)},
\end{equation}
and
\begin{equation}\label{eq_5}
  \partial_{\boldsymbol{\nu}_\varepsilon}\Psi_\varepsilon + \big( \partial_s \varphi^{(i)}_\varepsilon(\theta_i,x,t)  - \overline{V}^{(i)} \boldsymbol{\cdot}\overline{\nu}_\varepsilon \big)\, \Psi_\varepsilon =
   - \varepsilon^{2} e^{-\lambda t}\,  \Phi_\varepsilon^{(i)} \quad  \text{on} \  \Gamma_\varepsilon^{(i)},
\end{equation}
where the values $\{\theta_i\}$ depend on $\Psi_\varepsilon.$

First, we suppose  $P_1 \in \Gamma_\varepsilon^{(i)}\times (0, T)$ for some fixed index $i\in \{1,\ldots,M\}$ and
introduce a new function $\mathcal{K}_\varepsilon = \dfrac{\Psi_\varepsilon}{z_\varepsilon},$ where
\begin{equation*}
  z_\varepsilon(\overline{y}^{(i)}_1) = 1 + \varepsilon\, \frac{\varpi_i \, h_i}{2 } \Bigg( 1 - \Big(\frac{y^{(i)}_2}{\varepsilon h_i}\Big)^2 - \Big(\frac{y^{(i)}_3}{\varepsilon h_i}\Big)^2\Bigg), \quad {y}^{(i)} \in \overline{\Omega^{(i)}_\varepsilon},
\end{equation*}
with the constant
\begin{equation}\label{m_1}
 \varpi_i  = 1 +  \max_{X_i} \big|\partial_s \varphi^{(i)}(s,y^{(i)},t)\big| + \max_{\mathfrak{X}_i}\big|\overline{V}^{(i)}\big(y^{(i)}_1, \overline{\xi}_1, t\big)\big| \ge 1.
\end{equation}
Here $X_i$ is the domain of the function $\varphi^{(i)}$ (see {\bf A2}).
It is easy to verify that
\begin{equation}\label{z_1}
  z_\varepsilon\big|_{\Gamma^{(i)}_\varepsilon} =1, \qquad - \partial_{\overline{\nu}_\varepsilon} z_\varepsilon\big|_{\Gamma^{(i)}_\varepsilon} = \varpi_i, \qquad
  1 < z_\varepsilon \le 1  +  \tfrac{1}{2 }\, \varepsilon \, \varpi_i \, h_i \quad \text{in} \ \ \Omega^{(i)}_\varepsilon,
  \end{equation}
  \begin{equation}\label{z_2}
   |\nabla_{\overline{y}^{(i)}_1 } z_\varepsilon|^2  \le \varpi_i^2
   \quad \text{and} \quad
  \Delta_{\overline{y}^{(i)}_1 } z_\varepsilon  =  -\varepsilon^{-1} \frac{2 \varpi_i }{h_i} \quad \text{in} \ \ \Omega^{(i)}_\varepsilon .
  \end{equation}

After the substitution, the function $\mathcal{K}_\varepsilon$ satisfies the differential equation
\begin{equation}\label{eta_1}
  \partial_t \mathcal{K}_\varepsilon - \varepsilon\, \Delta \mathcal{K}_\varepsilon + \Big(\frac{2 \varepsilon\, \nabla_{\overline{y}^{(i)}_1} z_\varepsilon}{z_\varepsilon} + \overrightarrow{V_\varepsilon}^{(i)}\Big)\cdot \nabla \mathcal{K}_\varepsilon
\end{equation}
$$
  +
  \Bigg(\frac{2 \varpi_i}{h_i z_\varepsilon} - \frac{2 \varepsilon\, |\nabla_{\overline{y}^{(i)}_1} z_\varepsilon|^2}{z^2_\varepsilon}
    - \frac{ \varepsilon \, \overline{V}^{(i)}_\varepsilon \boldsymbol{\cdot} \nabla_{\overline{y}^{(i)}_1} z_\varepsilon}{z_\varepsilon}
   +  \frac{\partial v_1^{(i)}}{\partial y^{(i)}_1}+ \mathrm{div}_{\bar{\xi}_1}  \overline{V}^{(i)}(y^{(i)}_1, \bar{\xi}_1, t)\big|_{\bar{\xi}_1=\frac{\bar{y}^{(i)}_1}{\varepsilon}} +  \lambda   \Bigg) \mathcal{K}_\varepsilon
$$
$$
    =
    \left\{
      \begin{array}{ll}
        z_\varepsilon \, \varepsilon^2 \, e^{-\lambda t}\,  \mathcal{R}^{(i)}_\varepsilon, & \hbox{in}  \ \ \Omega^{(i)}_{\varepsilon,3\ell_0,\gamma}\times (0, T);
\\[4pt]
        z_\varepsilon\, \varepsilon^{{\gamma}}\,  e^{-\lambda t}\, \mathcal{R}^{(i)}_{\varepsilon,\ell_0}   & \hbox{in} \ \
 \big(\Omega_\varepsilon^{(i)}\setminus \Omega^{(i)}_{\varepsilon,3\ell_0,\gamma}\big) \times (0, T),
      \end{array}
    \right.
$$
and, by virtue of the first two equalities in \eqref{z_1}, the boundary conditions
\begin{equation}\label{eq_6}
  \partial_{\boldsymbol{\nu}_\varepsilon}\mathcal{K}_\varepsilon + \big( \varpi_i + \partial_s \varphi^{(i)}_\varepsilon(\theta_i,y^{(i)},t)  - \overline{V}^{(i)} \boldsymbol{\cdot}\overline{\nu}_\varepsilon \big)\, \mathcal{K}_\varepsilon =
   - \varepsilon^{2} e^{-\lambda t}\,  \Phi_\varepsilon^{(i)}
\end{equation}
on \ $\Gamma_\varepsilon^{(i)}, \ y^{(i)}_1\in [3\ell_0 \varepsilon^\gamma, \ell_i],$ and
\begin{equation}\label{eq_6+}
  \partial_{\boldsymbol{\nu}_\varepsilon}\mathcal{K}_\varepsilon +  \varpi_i \, \mathcal{K}_\varepsilon =
   0  \quad  \text{on} \  \Gamma_\varepsilon^{(i)}, \ y^{(i)}_1\in [\ell_0 \varepsilon,  3\ell_0 \varepsilon^\gamma].
\end{equation}
Due to the inequalities in \eqref{z_1} and \eqref{z_2}  we can choose the constant $\lambda$ independently of $\varepsilon$ in such a way that the coefficient at $\mathcal{K}_\varepsilon$ in \eqref{eta_1} is bounded from  below by a positive constant, and thanks to our  choice  of the constant $\varpi_i$ (see \eqref{m_1}),  in \eqref{eq_6} we have
\begin{equation}\label{eq_6++}
 \varpi_i + \partial_s \varphi^{(i)}_\varepsilon(\theta_i,y^{(i)},t)  - \overline{V}^{(i)} \boldsymbol{\cdot}\overline{\nu}_\varepsilon  \ge 1.
\end{equation}

By virtue of the first and third relation in \eqref{z_1}, the function $\mathcal{K}_\varepsilon$ takes in $\overline{\Omega}^{(i)}_\varepsilon$  the largest positive value also at the  point $P_1$. Therefore, $\partial_{\boldsymbol{\nu}_\varepsilon}\mathcal{K}_\varepsilon\big|_{P_1}  \ge 0$
 and  from \eqref{eq_6+} it follows that  $P_1$ cannot lie on $\Gamma_\varepsilon^{(i)}, \ y^{(i)}_1\in [\ell_0 \varepsilon,  3\ell_0 \varepsilon^\gamma],$ and from \eqref{eq_6} and \eqref{eq_6++} we get
$$
 \mathcal{K}_\varepsilon(P_1) \le -  \varepsilon^{2} \big( e^{-\lambda t}\,  \Phi_\varepsilon^{(i)}\big)\big|_{P_1} \ \ \Longrightarrow \ \
 \Psi_\varepsilon(P_1) \le -  \varepsilon^{2} \big( e^{-\lambda t}\,  \Phi_\varepsilon^{(i)}\big)\big|_{P_1}.
$$

It should be noted that  the point $P_1$ cannot  lie  on circular strips of  the lateral surfaces of the thin cylinders near their bases
$\{\Upsilon_{\varepsilon}^{(i)} (\ell_i)\}_{i=1}^M$ and $\{\Upsilon_{\varepsilon}^{(i)} (\varepsilon\,\ell_0)\}_{i=1}^M$ for any $t\in \times [0, T],$ since  $\{\overline{V}^{(i)}_\varepsilon\},$ $\{\varphi^{(i)}_\varepsilon\}$ and  $\{\Phi^{(i)}_\varepsilon\}$ vanish there (see the assumptions {\bf A1} and {\bf A2}) and the condition \eqref{eq_6+} holds.

In the case when  $\Psi_\varepsilon$ reaches its smallest negative value  at a point
$P_2 \in  \Gamma_\varepsilon^{(i)}\times (0, T)$ for some fixed index $i\in \{1,\ldots,M\}$ we should consider
the function  $- \Psi_\varepsilon$ and repeat the previous argumentations.
As the result, we get
\begin{equation}\label{max_psi}
  \max_{\overline{\Omega_\varepsilon}\times [0, t_1]} |\Psi_\varepsilon|  \le \max_{\cup_{i=1}^M\overline{\Gamma^{(i)}_\varepsilon}\times (0, t_1]} |\Psi_\varepsilon| \le \varepsilon^2 \, \sum_{i=1}^{M}\max_{\overline{\Gamma^{(i)}_\varepsilon}\times (0, T]} |\Phi^{(i)}_\varepsilon(y^{(i)}, t)|.
\end{equation}

Now it remains to consider the case when the function $\Psi_\varepsilon$ takes the largest positive value at a  point $P_2$ lying  on $\Gamma^{(0)}_\varepsilon \times (0, t_1],$ more precisely, at the support of  $\varphi_\varepsilon^{(0)}$ since  its normal derivative vanishes on the other part of  $\Gamma^{(0)}_\varepsilon$ (see \eqref{Res_7}).
 We again introduce a new function $\mathcal{K}^{(0)}_\varepsilon = \dfrac{\Psi_\varepsilon}{z^{(0)}_\varepsilon},$ where
\begin{equation*}
  z^{(0)}_\varepsilon(x) = 1 + \varepsilon\, \frac{\varpi_0 \, h_0}{2 } \Bigg( 1 - \sum_{k =1}^{3}\frac{(x_k - \varepsilon a_k)^2}{\varepsilon^2 h_0^2}\Bigg), \quad x \in \overline{B}_{\varepsilon h_0}(A_\varepsilon) \subset \overline{\Omega}_\varepsilon^{(0)}.
\end{equation*}
Here,  $B_{\varepsilon h_0}(A_\varepsilon)$ is a ball of radius $\varepsilon h_0$ centered at a point  $A_\varepsilon= \varepsilon (a_1, a_2, a_2)$ such that the $x$-coordinates of the point $P_2$ belong to $\partial B_{\varepsilon h_0}(A_\varepsilon)$. The constant is given by
\begin{equation}\label{m_0}
 \varpi_0  = 1 +  \max_{X_0} \big|\partial_s \varphi^{(0)}(s,x,t)\big| \ge 1,
\end{equation}
where $X_0$ is the domain of the function $\varphi^{(0)}$ (see {\bf A1}). It is easy to check that the function $z^{(0)}_\varepsilon$ satisfies  similar relations as in \eqref{z_1} and \eqref{z_2}, but already on the boundary of the ball $B_{\varepsilon h_0}(A_\varepsilon)$ and inside it, respectively.

The function $\mathcal{K}^{(0)}_\varepsilon$ satisfies in the ball a differential equation of the same type as $\mathcal{K}_\varepsilon$ (see \eqref{eta_1}) and  the constant $\lambda$ can be chosen independently of $\varepsilon$ in such a way that the coefficient at $\mathcal{K}^{(0)}_\varepsilon$ in this differential equation would be bounded below by a positive constant.
Obviously,  $\mathcal{K}^{(0)}_\varepsilon$  takes on the largest positive value at the point $P_2.$
Then, on the one hand, $\partial_{\boldsymbol{\nu}_\varepsilon} \mathcal{K}^{(0)}_\varepsilon\big|_{P_2} \ge 0,$ on the other hand, thanks to~\eqref{eq_4} we have
\begin{equation}\label{eq_7}
  \partial_{\boldsymbol{\nu}_\varepsilon}\mathcal{K}^{(0)}_\varepsilon\big|_{P_2} + \big( \varpi_0 + \partial_s \varphi^{(0)}_\varepsilon(\theta_0,x,t) \big)\, \mathcal{K}^{(0)}_\varepsilon\big|_{P_2} =
   - \varepsilon\, e^{-\lambda t}\,  \Phi_\varepsilon^{(0)}\big|_{P_2},
\end{equation}
whence
$$
 \mathcal{K}^{(0)}_\varepsilon(P_2) \le -  \varepsilon \, \big( e^{-\lambda t}\,  \Phi_\varepsilon^{(0)}\big)\big|_{P_2} \ \ \Longrightarrow \ \
 \Psi_\varepsilon(P_2) \le -  \varepsilon\,  \big(e^{-\lambda t}\,  \Phi_\varepsilon^{(0)}\big)\big|_{P_2}.
$$
Similarly, we consider the case when  $\Psi_\varepsilon$ reaches its smallest negative value  at a point
$P_3 \in  \Gamma_\varepsilon^{(0)}\times (0, T)$ and obtain
$$
\max_{\overline{\Omega_\varepsilon}\times [0, t_1]} |\Psi_\varepsilon|  \le \max_{\overline{\Gamma^{(0)}_\varepsilon}\times (0, t_1]} |\Psi_\varepsilon|
\le \varepsilon \, \max_{\overline{\Gamma^{(0)}_\varepsilon}\times (0, T]} |\Phi_\varepsilon^{(0)}(x, t)|.
$$

Thus, we have proved the estimate \eqref{e_8} for all cases, however, the constant $\lambda$ depends additionally on $\{\varpi_i\}_{i=0}^M$ (see \eqref{m_1} and  \eqref{m_0}) and the constant $C$ depends on constants in  \eqref{Res_10} and \eqref{Res_11}.
\end{proof}

To explain the result of Theorem \ref{Th_1} we note that it  follows from \eqref{max_1} that terms of the order $\mathcal{O}(\varepsilon)$ are redundant in the
approximation $\mathfrak{A}_\varepsilon$.  The asymptotic estimate  can therefore be properly  re-formulated
in terms of the zeroth-order  approximation
\begin{equation}\label{zero_app}
\mathfrak{A}_{0,\varepsilon} =
\left\{
  \begin{array}{ll}
   w_0^{(i)}(y^{(i)}_1,t) & \text{in} \   \Omega^{(i)}_{\varepsilon,3\ell_0,\gamma},
   \\ & i\in\{1,\ldots,m\},
\\
   w_0^{(i)}(y^{(i)}_1,t) + \chi_\delta^{(i)}(y^{(i)}_1) \, \Pi_0^{(i)}
    \left(\frac{\ell_i - y^{(i)}_1}{\varepsilon}, \frac{y^{(i)}_2}{\varepsilon}, \frac{y^{(i)}_3}{\varepsilon}, t\right) & \text{in}  \
    \Omega^{(i)}_{\varepsilon,3\ell_0,\gamma},
    \\
    & i\in\{m+1,...,M\},
    \\[2pt]
    N_0\left(\frac{x}{\varepsilon},t\right) & \text{in} \   \Omega^{(0)}_{\varepsilon, \gamma},
\\[2pt]
\chi_{\ell_0}\big(\frac{y^{(i)}_1}{\varepsilon^\gamma}\big)\, w_0^{(i)}(y^{(i)}_1,t) +
\big(1- \chi_{\ell_0}\big(\frac{y^{(i)}_1}{\varepsilon^\gamma}\big)\big)N_0\left(\frac{\mathbb{A}_i^{-1}y^{(i)}}{\varepsilon},t\right) & \text{in} \
\Omega^{(i)}_{\varepsilon,2\ell_0,3\ell_0,\gamma },
\\ & \ i\in\{1,\ldots,M\},
  \end{array}
\right.
\end{equation}
directly from \eqref{max_1}. Here  $t\in [0,T],$ $\gamma$ is a fixed number from $(\frac23, 1),$
 $\delta$ is a small fixed positive number such that $\chi_\delta^{(i)}$ vanishes in the support of $\varphi_\varepsilon^{(i)}$
$(i\in\{m+1,\ldots,M\}).$ As a result, we have the following statement.

\begin{corollary}\label{corol_1}
  There exist constants $C_0>0$ and $\varepsilon_0>0$ such that for all $\varepsilon\in(0, \varepsilon_0)$ the  estimate
 \begin{equation}\label{main_1}
   \max_{\overline{\Omega_\varepsilon}\times [0, T]} |\mathfrak{A}_{0,\varepsilon} - u_\varepsilon |  \le C_0 \, \varepsilon^{{\gamma}}
 \end{equation}
holds, where $\gamma$ is a fixed number from the interval $(\frac23, 1).$
\end{corollary}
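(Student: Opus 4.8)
The plan is to reduce \eqref{main_1} to Theorem \ref{Th_1} by the triangle inequality. Since $|\mathfrak{A}_{0,\varepsilon}-u_\varepsilon|\le|\mathfrak{A}_{0,\varepsilon}-\mathfrak{A}_\varepsilon|+|\mathfrak{A}_\varepsilon-u_\varepsilon|$ and the last term is bounded by $C_0\varepsilon^{\gamma}$ via \eqref{max_1}, it will suffice to prove
\begin{equation*}
  \max_{\overline{\Omega_\varepsilon}\times[0,T]}\big|\mathfrak{A}_\varepsilon-\mathfrak{A}_{0,\varepsilon}\big|\le C\,\varepsilon^{\gamma}
\end{equation*}
with $C$ independent of $\varepsilon$. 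For this I would compare \eqref{first_app} and \eqref{zero_app} piecewise on the four types of subdomains appearing there.

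On each thin-cylinder part $\Omega^{(i)}_{\varepsilon,3\ell_0,\gamma}$ one reads off from \eqref{regul} that $\mathcal{U}^{(i)}_\varepsilon-w^{(i)}_0=\varepsilon\big(w^{(i)}_1+u^{(i)}_1\big)+\varepsilon^2u^{(i)}_2$, and from \eqref{prim+} that $\mathcal{B}^{(i)}_\varepsilon-\Pi^{(i)}_0=\varepsilon\Pi^{(i)}_1+\varepsilon^2\Pi^{(i)}_2$; here I would invoke the uniform (in $\varepsilon$) boundedness of $w^{(i)}_1$ on $[0,\ell_i]\times[0,T]$ and of $u^{(i)}_1,u^{(i)}_2$ on $\overline{\Upsilon_i(y^{(i)}_1)}\times[0,T]$ (they depend on the fixed cross-section disk only through $\overline{\xi}_1=\overline{y}^{(i)}_1/\varepsilon$), and the exponential decay \eqref{exp_decay} of $\{\Pi^{(i)}_k\}$, to conclude that these contributions are $\mathcal{O}(\varepsilon)$. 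On the node neighbourhood $\Omega^{(0)}_{\varepsilon,\gamma}$ one has $\mathcal{N}_\varepsilon-N_0=\varepsilon N_1$ by \eqref{junc}; and on the matching zone $\Omega^{(i)}_{\varepsilon,2\ell_0,3\ell_0,\gamma}$ the difference reduces, after cancelling the common cut-off weights, to a convex combination of $\mathcal{U}^{(i)}_\varepsilon-w^{(i)}_0$ and $\varepsilon N_1$. So everything comes down to estimating $\varepsilon N_1$ on $\{y^{(i)}_1\in[\varepsilon\ell_0,3\ell_0\varepsilon^\gamma]\}$, while both approximations coincide on $\{y^{(i)}_1=\ell_i\}$ (both equal $q_i$) and at $\{t=0\}$ (both vanish).

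I expect the term $\varepsilon N_1$ to be the only genuine obstacle: by \eqref{new-solution_k}, \eqref{Psi_1} and \eqref{rem_exp-decrease+1} one has $N_1(\xi^{(i)},t)=w^{(i)}_1(0,t)+\xi^{(i)}_1\,\partial_{y^{(i)}_1}w^{(i)}_0(0,t)+\widetilde{N}_1$, and the middle summand grows linearly in $\xi^{(i)}_1=y^{(i)}_1/\varepsilon$, so $\varepsilon N_1$ is not $\mathcal{O}(\varepsilon)$ uniformly over the node region. The resolution is that on the relevant range one has $\varepsilon\,\xi^{(i)}_1\,\big|\partial_{y^{(i)}_1}w^{(i)}_0(0,t)\big|=y^{(i)}_1\,\big|\partial_{y^{(i)}_1}w^{(i)}_0(0,t)\big|\le 3\ell_0\,\varepsilon^\gamma\,\max_{[0,T]}\big|\partial_{y^{(i)}_1}w^{(i)}_0(0,\cdot)\big|$, whereas $\varepsilon\,|w^{(i)}_1(0,t)|\le C\varepsilon$ and $\varepsilon|\widetilde{N}_1|$ is exponentially small by the decay recorded in \S\ref{par_222}; hence $|\varepsilon N_1|\le C\varepsilon^\gamma$ there. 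Gathering the piecewise bounds — $\mathcal{O}(\varepsilon)$ on the thin-cylinder parts, $\mathcal{O}(\varepsilon^\gamma)$ on the node and matching zones, $0$ on the mentioned boundary sets — and using $\gamma<1$ gives the displayed estimate, and the triangle inequality then yields \eqref{main_1}. In short, it is the linear growth of the node-layer corrector $N_1$ that pins the rate in \eqref{main_1} at $\varepsilon^\gamma$ rather than $\varepsilon$, in line with the same loss already incurred in Theorem \ref{Th_1}.
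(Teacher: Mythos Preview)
Your argument is correct and follows the same route as the paper, which simply remarks before the corollary that the terms dropped in passing from $\mathfrak{A}_\varepsilon$ to $\mathfrak{A}_{0,\varepsilon}$ are of order at most $\varepsilon^\gamma$, so \eqref{main_1} follows from \eqref{max_1} by the triangle inequality. You are in fact more careful than the paper in isolating the linear growth of $N_1$ through $\Psi_1^{(i)}$ on the cylindrical part of $\Omega^{(0)}_{\varepsilon,\gamma}$ as the reason the dropped node contribution is only $\mathcal{O}(\varepsilon^\gamma)$ rather than $\mathcal{O}(\varepsilon)$; one minor quibble is that $\varepsilon\widetilde{N}_1$ is merely $\mathcal{O}(\varepsilon)$ near the node itself (where $\xi_1^{(i)}=\mathcal{O}(1)$), not exponentially small, but this does not affect the conclusion.
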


Depending on  the number $m$ that fixes the  number of inlet/outlet cylinders,
the functions   $\{w^{i}_0\}_{i=1}^M$   in   $\mathfrak{A}_{0,\varepsilon}$
satisfy the corresponding one-dimensional limit problems from \S~\ref{limitproblemr}; the boundary-layer part $\Pi^{i}_0$ solves the problem \eqref{prim+probl+0} in the semi-infinite cylinder $\mathfrak{C}_+^{(i)}$  (independent on $\varepsilon$), which is coupled to  $w^{i}_0$ by the boundary condition in \eqref{prim+probl+0}; and the node-layer part $N_0$ solves the problem \eqref{N_0_prob} in the  (unit)  three-dimensional junction (independent on $\varepsilon$), which is related to the functions $\{w^{i}_0\}_{i=1}^M$ by the corresponding conditions at infinity.  In this way, we can provide a closed problem for the entire limit behaviour.

As the next step  we prove  an   estimate on the gradient
$\nabla U_\varepsilon.$  For this, we additionally assume that
\begin{equation}\label{zero_2}
\partial_s \varphi^{(i)}(s,y^{(i)},t) \ge 0 \quad \text{in}  \ \ X_i, \quad \text{for} \ \  i\in \{1,\ldots,M\}.
\end{equation}

\begin{theorem}\label{apriory_estimate} Let the assumptions of Theorem \ref{Th_1} and the inequalities from  \eqref{zero_2} be satisfied.\
Then, for the difference $U_\varepsilon = \mathfrak{A}_\varepsilon - u_\varepsilon$ the estimate
\begin{equation}\label{app_estimate}
\tfrac{1}{\sqrt{|\Omega_\varepsilon|}}\,\| {\nabla_x U_\varepsilon}\|_{L^2(\Omega_\varepsilon\times (0, T))} \le
 C_2\, \varepsilon^{\gamma - \frac{1}{2}}
  \end{equation}
holds,  where  $|\Omega_\varepsilon|$ is  the Lebesgue measure of $\Omega_\varepsilon$.
%$\gamma$ is a fixed number from  $(\frac23, 1).$
\end{theorem}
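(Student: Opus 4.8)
The plan is to derive an energy estimate for $U_\varepsilon=\mathfrak{A}_\varepsilon-u_\varepsilon$ over the whole thin junction, testing the parabolic equation with $U_\varepsilon$ itself and tracking powers of $\varepsilon$ carefully. By Lemma~\ref{Prop-3-1}, $U_\varepsilon$ solves a linear parabolic problem of the form
\[
\partial_t U_\varepsilon-\varepsilon\,\Delta_x U_\varepsilon+\mathrm{div}_x\big(\overrightarrow{V_\varepsilon}\,U_\varepsilon\big)=F_\varepsilon\quad\text{in }\Omega_\varepsilon\times(0,T),\qquad U_\varepsilon|_{t=0}=0,\qquad U_\varepsilon|_{\cup_i\Upsilon^{(i)}_\varepsilon(\ell_i)}=0,
\]
where, because $\tfrac23<\gamma<1$, the right-hand side obeys $|F_\varepsilon|\le C\varepsilon^{\gamma}$ a.e.\ (the dominant term in \eqref{dif_1}--\eqref{dif_3} is $\varepsilon^{\gamma}\mathcal{R}^{(i)}_{\varepsilon,\ell_0}$), and the boundary conditions on $\Gamma^{(0)}_\varepsilon$ and $\Gamma^{(i)}_\varepsilon$ are obtained from \eqref{bc_1+} and \eqref{bc_3} by the mean value theorem: $\partial_{\boldsymbol{\nu}_\varepsilon}U_\varepsilon=-\partial_s\varphi^{(0)}_\varepsilon(\theta_0,\cdot)\,U_\varepsilon-\varepsilon\,\Phi^{(0)}_\varepsilon$ on $\Gamma^{(0)}_\varepsilon$, and $\partial_{\overline{\nu}_\varepsilon}U_\varepsilon=U_\varepsilon\big(\overline{V}^{(i)}_\varepsilon\cdot\overline{\nu}_\varepsilon-\partial_s\varphi^{(i)}_\varepsilon(\theta_i,\cdot)\big)-\varepsilon^2\Phi^{(i)}_\varepsilon$ on $\Gamma^{(i)}_\varepsilon$ (all data vanishing on the circular strips near the bases). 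Since $u_\varepsilon$ is classical and the cut-offs $\chi_{\ell_0},\chi_\delta^{(i)}$ in \eqref{first_app} are locally constant at the gluing abscissae $y^{(i)}_1\in\{\varepsilon\ell_0,2\ell_0\varepsilon^\gamma,3\ell_0\varepsilon^\gamma\}$, the function $\mathfrak{A}_\varepsilon$, hence $U_\varepsilon$, is $C^1$ across the interfaces where the definition in \eqref{first_app} changes, so that in the global energy identity the interior flux terms telescope to zero.

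Multiplying by $U_\varepsilon$, integrating over $\Omega_\varepsilon$ and using Green's formula together with $\mathrm{div}(\overrightarrow{V_\varepsilon}U_\varepsilon)U_\varepsilon=\tfrac12(\mathrm{div}\,\overrightarrow{V_\varepsilon})U_\varepsilon^2+\tfrac12\,\overrightarrow{V_\varepsilon}\cdot\nabla(U_\varepsilon^2)$, I arrive at
\[
\tfrac12\tfrac{d}{dt}\|U_\varepsilon\|_{L^2(\Omega_\varepsilon)}^2+\varepsilon\|\nabla_x U_\varepsilon\|_{L^2(\Omega_\varepsilon)}^2
=\varepsilon\!\!\int_{\partial\Omega_\varepsilon}\!\!\partial_{\boldsymbol{\nu}_\varepsilon}U_\varepsilon\,U_\varepsilon\,d\sigma-\tfrac12\!\!\int_{\Omega_\varepsilon}\!\!(\mathrm{div}\,\overrightarrow{V_\varepsilon})U_\varepsilon^2-\tfrac12\!\!\int_{\partial\Omega_\varepsilon}\!\!(\overrightarrow{V_\varepsilon}\cdot\boldsymbol{\nu}_\varepsilon)U_\varepsilon^2+\!\!\int_{\Omega_\varepsilon}\!\!F_\varepsilon U_\varepsilon.
\]
On $\cup_i\Upsilon^{(i)}_\varepsilon(\ell_i)$ the surface integrals vanish since $U_\varepsilon=0$; on $\Gamma^{(0)}_\varepsilon$ there is no flux, $\overrightarrow{V_\varepsilon}^{(0)}\cdot\boldsymbol{\nu}_\varepsilon=0$; on $\Gamma^{(i)}_\varepsilon$ one has $\overrightarrow{V_\varepsilon}^{(i)}\cdot\boldsymbol{\nu}_\varepsilon=\varepsilon\,\overline{V}^{(i)}_\varepsilon\cdot\overline{\nu}_\varepsilon$. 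Inserting the Robin conditions, the surface terms on $\Gamma^{(i)}_\varepsilon$ collapse to $\tfrac{\varepsilon}{2}\int_{\Gamma^{(i)}_\varepsilon}(\overline{V}^{(i)}_\varepsilon\cdot\overline{\nu}_\varepsilon)U_\varepsilon^2-\varepsilon\int_{\Gamma^{(i)}_\varepsilon}\partial_s\varphi^{(i)}_\varepsilon U_\varepsilon^2-\varepsilon^3\int_{\Gamma^{(i)}_\varepsilon}\Phi^{(i)}_\varepsilon U_\varepsilon$, and those on $\Gamma^{(0)}_\varepsilon$ to $-\varepsilon\int_{\Gamma^{(0)}_\varepsilon}\partial_s\varphi^{(0)}_\varepsilon U_\varepsilon^2-\varepsilon^2\int_{\Gamma^{(0)}_\varepsilon}\Phi^{(0)}_\varepsilon U_\varepsilon$. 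By the monotonicity hypothesis \eqref{zero_2}, $-\varepsilon\int_{\Gamma^{(i)}_\varepsilon}\partial_s\varphi^{(i)}_\varepsilon U_\varepsilon^2\le 0$ and this term is kept on the left-hand side; moreover $\mathrm{div}\,\overrightarrow{V_\varepsilon}=0$ in $\Omega^{(0)}_\varepsilon$, while $|\mathrm{div}\,\overrightarrow{V_\varepsilon}^{(i)}|=|\partial_{y^{(i)}_1}v^{(i)}_1+\mathrm{div}_{\overline{\xi}_1}\overline{V}^{(i)}|\le C$ uniformly in $\varepsilon$.

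It then remains to integrate in $t$ over $(0,T)$, use $U_\varepsilon|_{t=0}=0$, and estimate each surviving term by brute force, invoking the already established pointwise bound $\max_{\overline{\Omega_\varepsilon}\times[0,T]}|U_\varepsilon|\le C_0\varepsilon^{\gamma}$ of Theorem~\ref{Th_1}, the uniform bounds \eqref{Res_10}--\eqref{Res_11} on $F_\varepsilon,\Phi^{(0)}_\varepsilon,\Phi^{(i)}_\varepsilon$, and the measure estimates $|\Omega_\varepsilon|\le C\varepsilon^2$, $|\Gamma^{(0)}_\varepsilon|\le C\varepsilon^2$, $|\Gamma^{(i)}_\varepsilon|\le C\varepsilon$. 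One checks $\int_0^T\!\int_{\Omega_\varepsilon}|F_\varepsilon U_\varepsilon|\le C\varepsilon^{\gamma}\cdot\varepsilon^{\gamma}\cdot\varepsilon^2$, $\int_0^T\!\int_{\Omega_\varepsilon}|\mathrm{div}\,\overrightarrow{V_\varepsilon}|U_\varepsilon^2\le C\varepsilon^{2\gamma}\varepsilon^2$, $\varepsilon\int_0^T\!\int_{\Gamma^{(i)}_\varepsilon}|\overline{V}^{(i)}_\varepsilon|U_\varepsilon^2\le C\varepsilon\cdot\varepsilon^{2\gamma}\cdot\varepsilon$, the $\partial_s\varphi^{(0)}_\varepsilon$–term is $O(\varepsilon^{2\gamma+3})$ and the $\Phi$–terms are $O(\varepsilon^{\gamma+4})$, so the right-hand side is $O(\varepsilon^{2\gamma+2})$; hence $\varepsilon\|\nabla_x U_\varepsilon\|_{L^2(\Omega_\varepsilon\times(0,T))}^2\le C\varepsilon^{2\gamma+2}$, i.e.\ $\|\nabla_x U_\varepsilon\|_{L^2(\Omega_\varepsilon\times(0,T))}\le C\varepsilon^{\gamma+\frac12}$, and dividing by $\sqrt{|\Omega_\varepsilon|}\ge c\,\varepsilon$ gives \eqref{app_estimate}. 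The genuinely delicate points are (i) justifying the cancellation of the interface fluxes in the global identity, which rests on the $C^1$–matching of $\mathfrak{A}_\varepsilon$ across the gluing zones, and (ii) the careful bookkeeping of the $\varepsilon$–powers of $|\Omega_\varepsilon|$, $|\Gamma^{(i)}_\varepsilon|$ and of $F_\varepsilon$ on the various subdomains, so that after absorbing the diffusion factor $\varepsilon$ into the Dirichlet energy and dividing by $\sqrt{|\Omega_\varepsilon|}$ one lands exactly at the exponent $\gamma-\tfrac12$; the sign–indefinite surface convection term $\tfrac{\varepsilon}{2}\int_{\Gamma^{(i)}_\varepsilon}(\overline{V}^{(i)}_\varepsilon\cdot\overline{\nu}_\varepsilon)U_\varepsilon^2$ is harmless here because of the $L^\infty$–bound on $U_\varepsilon$, but could alternatively be absorbed via the $\varepsilon$–weighted trace inequality $\int_{\Gamma^{(i)}_\varepsilon}U_\varepsilon^2\le C\varepsilon^{-1}\|U_\varepsilon\|_{L^2(\Omega^{(i)}_\varepsilon)}^2+C\varepsilon\|\nabla_x U_\varepsilon\|_{L^2(\Omega^{(i)}_\varepsilon)}^2$ together with a Grönwall argument.
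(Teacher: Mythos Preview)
Your argument is correct and follows the paper's strategy: test the residual problem from Lemma~\ref{Prop-3-1} against $U_\varepsilon$, use the monotonicity \eqref{zero_2} to discard the sign-definite boundary terms on $\Gamma^{(i)}_\varepsilon$, and then estimate everything remaining with the $L^\infty$ bound \eqref{max_1} together with the measure scalings $|\Omega_\varepsilon|\sim\varepsilon^2$, $|\Gamma^{(i)}_\varepsilon|\sim\varepsilon$, $|\Gamma^{(0)}_\varepsilon|\sim\varepsilon^2$.

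The one organizational difference is in the convective term. The paper keeps $\int_{\Omega_\varepsilon}U_\varepsilon\,\overrightarrow{V_\varepsilon}\cdot\nabla U_\varepsilon$ on the right, splits it into a longitudinal part (handled by \eqref{incom} via incompressibility in the node and integration by parts in the cylinders) and a transverse part $\varepsilon\int_{\Omega^{(i)}_\varepsilon}U_\varepsilon\,\overline{V}^{(i)}_\varepsilon\cdot\nabla_{\bar y^{(i)}_1}U_\varepsilon$, which it bounds by $C\varepsilon^{1+\gamma}\|\nabla U_\varepsilon\|_{L^2}$ and absorbs via Young's inequality. You instead use the identity $\mathrm{div}(\overrightarrow{V_\varepsilon}U_\varepsilon)\,U_\varepsilon=\tfrac12(\mathrm{div}\,\overrightarrow{V_\varepsilon})U_\varepsilon^2+\tfrac12\,\mathrm{div}(\overrightarrow{V_\varepsilon}U_\varepsilon^2)$ globally, turning the second piece into the surface term $\tfrac{\varepsilon}{2}\int_{\Gamma^{(i)}_\varepsilon}(\overline{V}^{(i)}_\varepsilon\cdot\overline\nu_\varepsilon)U_\varepsilon^2$, which you then bound directly by \eqref{max_1}. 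Your route avoids the Young-inequality absorption step and is slightly more streamlined; the paper's route makes the cancellation across $\Upsilon^{(i)}_\varepsilon(\varepsilon\ell_0)$ and the role of the node incompressibility more explicit. Both yield the same final power $\varepsilon^{2\gamma+2}$ for $\varepsilon\|\nabla U_\varepsilon\|_{L^2}^2$, hence \eqref{app_estimate}. Your observation that the $\partial_s\varphi^{(0)}_\varepsilon$ term on $\Gamma^{(0)}_\varepsilon$ (not covered by \eqref{zero_2}) is $O(\varepsilon^{2\gamma+3})$ is exactly how the paper handles it as well.
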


Error estimates in the $L^2$-norm for thin domains must be written in the rescaled form, namely,  divided by the square of the volume of the corresponding thin domain. Obviously, $|\Omega_\varepsilon|$ has the order $\varepsilon^2.$

\begin{proof} We multiply the differential equations \eqref{dif_1}--\eqref{dif_3}  with  $U_\varepsilon$ and integrate them over the corresponding domain and over $ (0, \tau),$ where $\tau$ is an arbitrary number from $(0, T).$ Integrating by parts and taking \eqref{lap_1}--\eqref{grad_1} and the boundary conditions and initial condition into account, we get
 \begin{multline*}
  \frac{1}{2} \int_{\Omega^{(0)}_\varepsilon}U_\varepsilon^2\big|_{t=\tau}\, dx +
  \frac{1}{2}\sum_{i=1}^{M}\int_{\Omega^{(i)}_\varepsilon} U_\varepsilon^2\big|_{t=\tau}\, dy^{(i)}
  + \varepsilon \, \| {\nabla U_\varepsilon}\|^2_{L^2(\Omega_\varepsilon\times (0, \tau))}
\\
 + \sum_{i=1}^{M} \int\limits_{0}^{\tau}\bigg(\int_{\Gamma_\varepsilon^{(i)}}\partial_s \varphi^{(i)}_\varepsilon(\theta_i,y^{(i)},t) \, U^2_\varepsilon \, dS_{y^{(i)}}\bigg) dt
 \\
  =  \int\limits_{0}^{\tau} \bigg(\int_{\Omega^{(0)}_\varepsilon} U_\varepsilon\,
    \overrightarrow{V_\varepsilon} \cdot
    {\nabla U_\varepsilon} \, dx + \sum_{i=1}^{M}\int_{\Omega^{(i)}_\varepsilon} U_\varepsilon\,
    \overrightarrow{V_\varepsilon} \cdot
    {\nabla U_\varepsilon} \, dy^{(i)}\bigg) dt
    \\
    +
       \int\limits_{0}^{\tau}\bigg(
\varepsilon \int\limits_{\Omega_\varepsilon^{(0)}}  \mathcal{R}^{(0)}_\varepsilon \, U_\varepsilon  dx
+ \varepsilon^{{\gamma}} \sum_{i=1}^{M} \int\limits_{\Omega_\varepsilon^{(i)}\setminus \Omega^{(i)}_{\varepsilon,3\ell_0,\gamma}}  \mathcal{R}^{(i)}_{\varepsilon,\ell_0} \, U_\varepsilon dy^{(i)} + \varepsilon^2 \sum_{i=1}^{M} \int\limits_{ \Omega^{(i)}_{\varepsilon,3\ell_0,\gamma}}  \mathcal{R}^{(i)}_{\varepsilon} \, U_\varepsilon dy^{(i)}\bigg)dt
\\
 -  \int\limits_{0}^{\tau}\bigg(\int_{\Gamma_\varepsilon^{(0)}}\Big(\partial_s \varphi^{(0)}_\varepsilon(\theta_0,x,t) \, U_\varepsilon + \varepsilon\, \Phi^{(0)}_\varepsilon\Big) \, U_\varepsilon \, dS_x
+ \varepsilon^2 \sum_{i=1}^{M} \int_{\Gamma_\varepsilon^{(i)}}  \Phi^{(i)}_\varepsilon \, U_\varepsilon \, dS_{y^{(i)}}\bigg) dt.
  \end{multline*}
Taking  \eqref{zero_2}, \eqref{Res_10}, \eqref{Res_11} and \eqref{max_1}  into account,  we deduce from the previous equality that
\begin{multline}\label{s_1}
  \varepsilon \, \| {\nabla U_\varepsilon}\|^2_{L^2(\Omega_\varepsilon\times (0, \tau))}
\le  \varepsilon \sum_{i=1}^{M}\int\limits_{0}^{\tau} \bigg(  \int_{\Omega^{(i)}_\varepsilon} U_\varepsilon\,
    \overline{V}_\varepsilon^{(i)} \cdot
    {\nabla_{\overline{y}^{(i)}_1} U_\varepsilon} \, dy^{(i)}\bigg) dt
 + C_4 \, \varepsilon^{2 + 2 \gamma}
\\
 + \frac{1}{2}  \int\limits_{0}^{\tau} \bigg( \int_{\Omega^{(0)}_\varepsilon}     \overrightarrow{V_\varepsilon} \cdot
    \nabla\big( U^2_\varepsilon\big) \, dx + \sum_{i=1}^{M}\int_{\Omega^{(i)}_\varepsilon}
    {v}^{(i)}_1\big(y_1^{(i)},t\big) \, \partial_{y^{(i)}_1}\big(U^2_\varepsilon\big) \, dy^{(i)}\bigg) dt.
   \end{multline}
Owing to \eqref{V_1},  the incompressibleness  of $\overrightarrow{V_\varepsilon}^{(0)}$  in $\Omega^{(0)}_\varepsilon $
and the Dirichlet conditions for $U_\varepsilon$ on $\{\Upsilon_{\varepsilon}^{(i)} (\ell_i)\}_{i=1}^M,$
\begin{equation}\label{incom}
  \int\limits_{\Omega^{(0)}_\varepsilon}     \overrightarrow{V_\varepsilon} \cdot
    \nabla\big( U^2_\varepsilon\big) \, dx + \sum_{i=1}^{M}\int\limits_{\Omega^{(i)}_\varepsilon}
    {v}^{(i)}_1\big(y_1^{(i)},t\big) \, \partial_{y^{(i)}_1}\big(U^2_\varepsilon\big) \, dy^{(i)} =
- \sum_{i=1}^{M}\int\limits_{\Omega^{(i)}_\varepsilon}
    U^2_\varepsilon  \, \partial_{y^{(i)}_1}\big( {v}^{(i)}_1\big) \, dy^{(i)} .
\end{equation}
Thanks to the boundedness of $\partial_{y^{(i)}_1}\big( {v}^{(i)}_1\big)$ (see \S~\ref{subsec_V}) and again using \eqref{max_1}, we derive from \eqref{s_1} and \eqref{incom} the inequality
\begin{align*}%\label{ineq_1}
  \| {\nabla U_\varepsilon}\|^2_{L^2(\Omega_\varepsilon\times (0, \tau))} \le & \ C_5 \, \varepsilon^{1+ 2\gamma} + C_6 \, \varepsilon^{1+\gamma} \, \| {\nabla U_\varepsilon}\|_{L^2(\Omega_\varepsilon\times (0, \tau))} + C_4 \, \varepsilon^3
  \\
  \le & \ C_7 \, \varepsilon^{1+ 2\gamma} + C_8  \varepsilon^{2+ 2\gamma} + \frac{\| {\nabla U_\varepsilon}\|^2_{L^2(\Omega_\varepsilon\times (0, \tau))}}{2},
\end{align*}
whence we get \eqref{app_estimate}.
\end{proof}

It follows from \eqref{app_estimate} that terms of the order $\mathcal{O}(\varepsilon^2)$ are redundant in the approximation $\mathfrak{A}_\varepsilon$. Denote by $\mathfrak{A}_{1,\varepsilon}$ the approximation function $\mathfrak{A}_\varepsilon$ without the terms $ \{\varepsilon^2 u_2^{(i)}\}_{i=1}^M$ and  $\{\varepsilon^2 \Pi_2^{(i)}\}_{i=1}^M.$
\begin{corollary}\label{corol_2}
  There exist constants $C_3>0$ and $\varepsilon_0>0$ such that for all $\varepsilon\in(0, \varepsilon_0)$ the  estimate
\begin{equation}\label{app_estimate_1}
\tfrac{1}{\sqrt{|\Omega_\varepsilon|}}\,\| \nabla_x \mathfrak{A}_{1,\varepsilon} - \nabla_x u_\varepsilon\|_{L^2(\Omega_\varepsilon\times (0, T))} \le
 C_3\, \varepsilon^{\gamma - \frac{1}{2}}
  \end{equation}
 holds, where $\gamma$ is a fixed number from $(\frac23, 1).$
\end{corollary}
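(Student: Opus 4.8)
The plan is to deduce \eqref{app_estimate_1} from the gradient bound \eqref{app_estimate} of Theorem~\ref{apriory_estimate} by the triangle inequality, after estimating the correction $\mathfrak{A}_\varepsilon - \mathfrak{A}_{1,\varepsilon}$ in the same norm. By the definition of $\mathfrak{A}_{1,\varepsilon}$ and the structure of \eqref{first_app}, this correction equals $\varepsilon^2 u^{(i)}_2\big(y^{(i)}_1, \tfrac{\overline{y}^{(i)}_1}{\varepsilon}, t\big)$ in $\Omega^{(i)}_{\varepsilon,3\ell_0,\gamma}$ for the inlet cylinders ($i\in\{1,\ldots,m\}$), $\varepsilon^2 u^{(i)}_2 + \chi_\delta^{(i)}(y^{(i)}_1)\,\varepsilon^2\Pi^{(i)}_2\big(\tfrac{\ell_i - y^{(i)}_1}{\varepsilon}, \tfrac{y^{(i)}_2}{\varepsilon}, \tfrac{y^{(i)}_3}{\varepsilon}, t\big)$ in $\Omega^{(i)}_{\varepsilon,3\ell_0,\gamma}$ for the outlet cylinders ($i\in\{m+1,\ldots,M\}$), $\chi_{\ell_0}\big(\tfrac{y^{(i)}_1}{\varepsilon^\gamma}\big)\,\varepsilon^2 u^{(i)}_2$ in the transition zones $\Omega^{(i)}_{\varepsilon,2\ell_0,3\ell_0,\gamma}$, and $0$ on $\Omega^{(0)}_{\varepsilon,\gamma}$.

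The first thing I would note is that, by Remark~\ref{r_2_1}, $u^{(i)}_2$ vanishes in a fixed neighbourhood of $y^{(i)}_1=0$; since $3\ell_0\varepsilon^\gamma\to 0$ as $\varepsilon\to0$, the transition-zone term is therefore identically zero for $\varepsilon$ small enough, so the factor $\varepsilon^{-\gamma}$ produced by differentiating $\chi_{\ell_0}\big(\tfrac{\cdot}{\varepsilon^\gamma}\big)$ never hits a non-trivial term and the correction is supported in $\bigcup_{i=1}^M\Omega^{(i)}_{\varepsilon,3\ell_0,\gamma}$. For the $u^{(i)}_2$-term I would use that $u^{(i)}_2$ and the derivatives entering $\nabla_x u^{(i)}_2$ are bounded (by the regularity assumed in Remark~\ref{add_assumptios}) and that each transverse differentiation of $u^{(i)}_2\big(y^{(i)}_1, \tfrac{\overline{y}^{(i)}_1}{\varepsilon}, t\big)$ costs a factor $\varepsilon^{-1}$; this yields the pointwise bound $\big|\nabla_x\big(\varepsilon^2 u^{(i)}_2\big)\big|\le C\varepsilon$ on $\Omega^{(i)}_\varepsilon\times(0,T)$, hence $\big\|\nabla_x\big(\varepsilon^2 u^{(i)}_2\big)\big\|_{L^2(\Omega^{(i)}_\varepsilon\times(0,T))}\le C\varepsilon\sqrt{|\Omega_\varepsilon|}$. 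For the $\Pi^{(i)}_2$-term I would invoke the exponential decay \eqref{exp_decay}, together with the corresponding decay of $\nabla_\eta\Pi^{(i)}_2$ (available from \eqref{prim+probl+k} or directly from the Fourier representation), to get $\big|\nabla_x\big(\chi_\delta^{(i)}\varepsilon^2\Pi^{(i)}_2\big)\big|\le C\varepsilon\,e^{-\frac{\theta_0}{2\varepsilon}(\ell_i-y^{(i)}_1)}$; integrating the exponential over $\Omega^{(i)}_\varepsilon\times(0,T)$ (order $\varepsilon^2$ from the cross-section, order $\varepsilon$ from the longitudinal variable) gives a contribution of order $\varepsilon^{5/2}=\varepsilon^{3/2}\sqrt{|\Omega_\varepsilon|}$, because $|\Omega_\varepsilon|$ is of order $\varepsilon^2$.

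Summing over $i$ would then give $\tfrac{1}{\sqrt{|\Omega_\varepsilon|}}\,\|\nabla_x\mathfrak{A}_\varepsilon - \nabla_x\mathfrak{A}_{1,\varepsilon}\|_{L^2(\Omega_\varepsilon\times(0,T))}\le C\varepsilon$. Writing $\nabla_x\mathfrak{A}_{1,\varepsilon}-\nabla_x u_\varepsilon = (\nabla_x\mathfrak{A}_\varepsilon-\nabla_x u_\varepsilon) - (\nabla_x\mathfrak{A}_\varepsilon-\nabla_x\mathfrak{A}_{1,\varepsilon})$, combining this correction bound with \eqref{app_estimate}, and using that $\gamma-\tfrac12<\tfrac12<1$ so $\varepsilon=o\big(\varepsilon^{\gamma-\frac12}\big)$ as $\varepsilon\to0$, one obtains \eqref{app_estimate_1} for all sufficiently small $\varepsilon$ with an appropriate constant $C_3$. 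I do not anticipate a real obstacle in this argument; the only step that needs genuine care is the handling of the transition zones just mentioned, where one must use the vanishing of $u^{(i)}_2$ near $y^{(i)}_1=0$ to make sure the cut-off $\chi_{\ell_0}$ does not contribute an uncontrolled $\varepsilon^{-\gamma}$ — the rest being routine chain-rule and exponential-decay estimates.
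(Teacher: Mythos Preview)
Your proposal is correct and matches the paper's (implicit) approach: the corollary is stated without a separate proof, the paper simply noting that ``terms of the order $\mathcal{O}(\varepsilon^2)$ are redundant in the approximation $\mathfrak{A}_\varepsilon$'', which amounts precisely to the triangle-inequality argument you carry out. Your handling of the transition zones via Remark~\ref{r_2_1} and of the boundary-layer term via the exponential decay \eqref{exp_decay} is exactly what is needed to make that remark rigorous.
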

%%%%%%%%%%%%%%%%%%%%%

%%%%%%%%%%%%%%%%%%%%%

\subsection{General networks}\label{explanation}
To explain how to apply the approximation results  to a general network, consider a network having two nodes
$\Omega^{(0)}_\varepsilon$ and $B^{(0)}_\varepsilon$ (see Fig.~\ref{f4}); the dynamic of the
convective vector field $\overrightarrow{V_\varepsilon}$ is shown by arrows in this figure.
\begin{figure}[htbp]
\centering
\includegraphics[width=9cm]{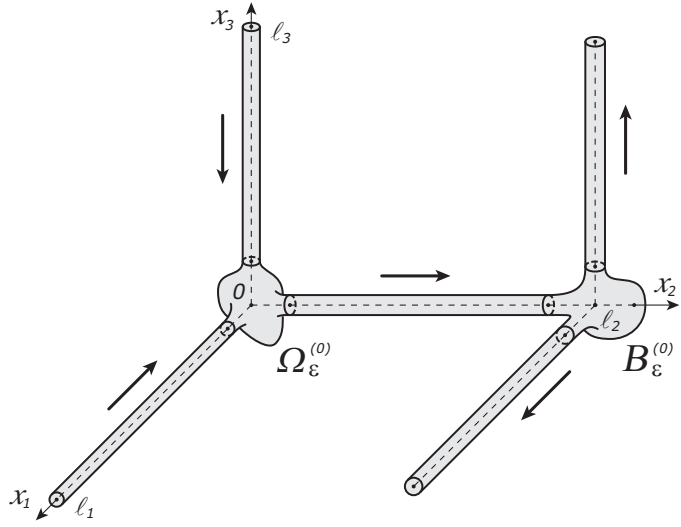}
\caption{A network with two nodes}\label{f4}
\end{figure}
Thus, we have two input thin cylinders $\Omega^{(1)}_\varepsilon$ and $\Omega^{(3)}_\varepsilon$ (concerning the vector field),
one output cylinder $\Omega^{(2)}_\varepsilon$ from the  node $\Omega^{(0)}_\varepsilon,$  which is simultaneously the input one into
 the second node $B^{(0)}_\varepsilon;$ also we have two outgoing cylinders $B^{(4)}_\varepsilon$ and $B^{(5)}_\varepsilon$ from
  $B^{(0)}_\varepsilon.$
This thin network shrinks  into the graph
$\mathcal{I}_5 := \bigcup_{i=1}^5 I_i,$ where $I_i =  \{x\colon x_i \in  [0, \ell_i], \ \overline{x}_i = (0, 0)\},$ $i\in \{1, 2, 3\},$ and
$I_4 =  \{x\colon x_1 \in  [0, \ell_4], \ x_2 = \ell_2, \ x_3 =0\},$
$I_5 =  \{x\colon x_1 =0, \ x_2 = \ell_2, \ x_3 \in  [0, \ell_5] \},$ with vertices at the origin and the point $(0, \ell_2, 0).$

The zero approximation  in the case of $\alpha=1$ is equal to
\begin{equation}\label{zero_app+2}
\mathfrak{A}_{0,\varepsilon}=
\left\{
  \begin{array}{l}
   w_0^{(i)}(x_i,t) \hfill  \text{in} \quad   \Omega^{(i)}_{\varepsilon,3\ell_0,\gamma}, \ \ i\in\{1,3\},
\\[4pt]
N_0\left(\frac{x}{\varepsilon},t\right) \hfill  \text{in} \quad   \Omega^{(0)}_{\varepsilon, \gamma},
\\[4pt]
    \chi_{\ell_0}\big(\frac{x_i}{\varepsilon^\gamma}\big)\, w_0^{(i)}(x_i,t) +
\Big(1- \chi_{\ell_0}\big(\frac{x_i}{\varepsilon^\gamma}\big)\Big) N_0\left(\frac{x}{\varepsilon},t\right) \quad  \text{in} \quad
\Omega^{(i)}_{\varepsilon,2\ell_0,3\ell_0,\gamma },  \ \  i\in\{1,2,3\},
\\[4pt]
 w_0^{(2)}(x_2,t) \hfill  \text{in} \quad   \Omega^{(2)}_{\varepsilon} \cap \big\{x\colon x_2 \in [3\ell_0 \varepsilon^\gamma, \ell_2  - 3\ell_0 \varepsilon^\gamma]\big\},
\\[4pt]
 \chi_{\ell_0}\big(\frac{\ell_2 - x_2}{\varepsilon^\gamma}\big)\, w_0^{(2)} +
\Big(1- \chi_{\ell_0}\big(\frac{\ell_2 - x_2}{\varepsilon^\gamma}\big)\Big) \mathfrak{N}_0\left(\frac{x_1}{\varepsilon},\frac{\ell_2 -x_2}{\varepsilon},\frac{x_3}{\varepsilon},t\right)
   \\
    \hfill \text{in} \quad   \Omega^{(2)}_{\varepsilon} \cap \big\{x\colon x_2 \in [\ell_2  - 3\ell_0 \varepsilon^\gamma, \ell_2  - 2\ell_0 \varepsilon^\gamma]\big\} ,
\\[4pt]
\mathfrak{N}_0\left(\frac{x_1}{\varepsilon},\frac{\ell_2 -x_2}{\varepsilon},\frac{x_3}{\varepsilon},t\right) \hfill  \text{in} \quad  B^{(0)}_{\varepsilon, \gamma},
\\[4pt]
    \chi_{\ell_0}\big(\frac{x_1}{\varepsilon^\gamma}\big)\, z_0^{(4)}(x_1,t) +
\Big(1- \chi_{\ell_0}\big(\frac{x_1}{\varepsilon^\gamma}\big)\Big) \mathfrak{N}_0\left(\frac{x_1}{\varepsilon},\frac{\ell_2 -x_2}{\varepsilon},\frac{x_3}{\varepsilon},t\right) \hfill   \text{in} \quad
B^{(4)}_{\varepsilon,2\ell_0,3\ell_0,\gamma },
\\[4pt]
    \chi_{\ell_0}\big(\frac{x_3}{\varepsilon^\gamma}\big)\, z_0^{(5)}(x_3,t) +
\Big(1- \chi_{\ell_0}\big(\frac{x_3}{\varepsilon^\gamma}\big)\Big) \mathfrak{N}_0\left(\frac{x_1}{\varepsilon},\frac{\ell_2 -x_2}{\varepsilon},\frac{x_3}{\varepsilon},t\right) \hfill  \text{in} \quad
B^{(5)}_{\varepsilon,2\ell_0,3\ell_0,\gamma },
\\[4pt]
   z_0^{(4)}(x_1,t) + \chi_\delta^{(4)}(x_1) \, \Pi_0^{(4)}
    \left(\frac{\ell_4 - x_1}{\varepsilon}, \frac{x_2 - \ell_2}{\varepsilon}, \frac{x_3}{\varepsilon}, t\right) \hfill  \text{in}  \quad
    B^{(4)}_{\varepsilon,3\ell_0,\gamma},
\\[4pt]
   z_0^{(5)}(x_3,t) + \chi_\delta^{(5)}(x_3) \, \Pi_0^{(5)}
    \left(\frac{x_1}{\varepsilon}, \frac{x_2 - \ell_2}{\varepsilon}, \frac{\ell_5 - x_3}{\varepsilon}, t\right) \hfill  \text{in}  \quad
    B^{(5)}_{\varepsilon,3\ell_0,\gamma},
      \end{array}
\right.
\end{equation}
where  the notation for the domains are the same as in \eqref{note_doms}; the cut-off functions $\chi_{\ell_0},$ $\chi_\delta^{(4)}$ and $\chi_\delta^{(5)}$ are defined in \S~\ref{Sec:justification};

$\bullet$  $\{w_0^{(i)}\}_{i=1}^3$ and $\{z_0^{(i)}\}_{i=4}^5$ form the solution to the corresponding limit problem on the graph  $\mathcal{I}_5,$
namely, $w_0^{(1)}$ and $w_0^{(3)}$ are solutions to the problem \eqref{limit_prob_1_2}, respectively; $w_0^{(2)}$ is a solution to the problem \eqref{limit_prob_1_4} (now $m=2$ and $M=3)$; and $\{z_0^{(i)}\}_{i=4}^5$ are solutions to the problem \eqref{limit_prob_1_3}, respectively, with boundary condition $z_0^{(i)}(0,t) = w_0^{(2)}(\ell_2, t),$ $i\in \{4, 5\};$

$\bullet$  $N_0$ and $\mathfrak{N}_0$ are nodal solutions to the problem \eqref{N_0_prob} in the corresponding unbounded domain  that related to the corresponding node; $\mathfrak{N}_0$ satisfies the following conditions:
$\mathfrak{N}_0(\xi,t)  \sim z^{(4)}_{0}(0,t)$   as $\xi_1 \to +\infty,$ \
$\mathfrak{N}_0(\xi,t)  \sim w^{(2)}_{0}(\ell_2,t)$   as $\xi_2 \to +\infty,$ \
$\mathfrak{N}_0(\xi,t)  \sim z^{(5)}_{0}(0,t)$   as $\xi_3 \to +\infty;$

$\bullet$  $\Pi_0^{(4)}$ and $\Pi_0^{(5)}$ are boundary-layer solutions to the corresponding problem \eqref{prim+probl+0}.

The approximation function \eqref{zero_app+2} satisfies the same asymptotic estimate as in Corollary~\ref{corol_1}. Similarly we can construct a first-order approximation and prove the corresponding  estimate, as  in Corollary~\ref{corol_2}.

%%%%%%%%%%%%%%%%%%%%%
%%%%%%%%%%%%%%%%%%%%%

\section{Asymptotic approximation in the  case $\alpha > 1$}\label{Sec:expansions+}
 If $\alpha \ge 2,$ then the order of sources of convective and diffusion flows at the boundary is small compared to the orders of residuals, which the  approximation $\mathfrak{A}_\varepsilon$ leaves in the problem (see Lemma~\ref{Prop-3-1}). Therefore, the same approximating function can be used to approximate the solution $u_\varepsilon,$ however, functions $\{\varphi_\varepsilon^{(i)}\}_{i=0}^M$ and their derivatives must be absent in the problems that determine the coefficients of $\mathfrak{A}_\varepsilon.$
In addition, the same asymptotic estimates as in \S~\ref{A priori estimates}  hold. They can be proved without the additional assumptions for
$\{\varphi_\varepsilon^{(i)}\}_{i=0}^M$ made in Remark~\ref{add_assumptios} and the assumption \eqref{zero_2}.

When a problem has two parameters, it is necessary to change the asymptotic scale by adjusting it precisely for these parameters (see, e.g., \cite{Mel_Kle_2019,M-AA-2021}). For $\alpha \in (\frac32 , 2)$, we propose the following ansatzes:\\
$\bullet$ \   the regular ansatz
  \begin{multline}\label{regul_1}
\mathcal{U}^{(i)}_\varepsilon =
w_0^{(i)}(y^{(i)}_1,t) + \varepsilon^{\alpha-1} w_{\alpha-1}^{(i)}(y^{(i)}_1,t) +  \varepsilon
\Big(w_1^{(i)}(y^{(i)}_1,t)  + u_1^{(i)} \big(y^{(i)}_1, \tfrac{\overline{y}^{(i)}_1}{\varepsilon}, t \big)
     \Big)
     \\
     + \varepsilon^{2\alpha-2} w_{2\alpha-2}^{(i)}(y^{(i)}_1,t)
 + \varepsilon^\alpha \Big(w_\alpha^{(i)}(y^{(i)}_1,t)  + u_\alpha^{(i)} \big(y^{(i)}_1, \tfrac{\overline{y}^{(i)}_1}{\varepsilon}, t \big)\Big)
+ \varepsilon^2 u_2^{(i)} \big(y^{(i)}_1, \tfrac{\overline{y}^{(i)}_1}{\varepsilon}, t \big)
\\
+ \varepsilon^{2\alpha -1} u_{2\alpha -1}^{(i)} \big(y^{(i)}_1, \tfrac{\overline{y}^{(i)}_1}{\varepsilon}, t \big)
+ \varepsilon^{\alpha+1} u_{\alpha +1}^{(i)} \big(y^{(i)}_1, \tfrac{\overline{y}^{(i)}_1}{\varepsilon}, t \big)
\quad \text{in} \ \ \Omega^{(i)}_\varepsilon \ \ (i \in \{1,\ldots,M\});
\end{multline}
$\bullet$ \  the boundary-layer ansatz
\begin{equation}\label{prim_1}
\mathcal{B}^{(i)}_\varepsilon(y^{(i)},t)
 := \sum_{\flat \in  \beth^\setminus \{ 2 \alpha -1, \alpha +1\}}\varepsilon^{\flat} \, \Pi_\flat^{(i)}
    \left(\tfrac{\ell_i - y^{(i)}_1}{\varepsilon}, \tfrac{y^{(i)}_2}{\varepsilon}, \tfrac{y^{(i)}_3}{\varepsilon}, t\right)
 \end{equation}
in a neighborhood of the base $\Upsilon_{\varepsilon}^{(i)} (\ell_i)$ of the cylinder $\Omega^{(i)}_\varepsilon$  $(i \in \{m+1,\ldots,M\}),$
 where the index set $\beth := \{0, \alpha -1, 1, 2\alpha -2, \alpha, 2, 2 \alpha -1,  \alpha+1\};$\\
$\bullet$ \ and the node-layer one
  \begin{equation}\label{junc+1}
  \mathcal{N}_\varepsilon(x,t) :=\sum_{\flat \in  \beth^\setminus \{2, 2 \alpha -1, \alpha +1 \}}\varepsilon^\flat N_\flat\left(\tfrac{x}{\varepsilon},t\right)
\end{equation}
in a neighborhood of the node $\Omega^{(0)}_\varepsilon$.

\begin{remark}\label{Rem_4_1}
 The closer the parameter $\alpha$ is  to $1,$  the more terms will be between $\varepsilon^0$ and $\varepsilon^1$ (similar as in
\cite[\S 4]{M-AA-2021}).
So, for example, if $\alpha \in (\frac43, \frac32),$ the asymptotic scale is as follows
$$
\varepsilon^0, \ \ \varepsilon^{\alpha -1}, \ \ \varepsilon^{2\alpha -2},  \ \ \varepsilon^1, \ \ \varepsilon^{3\alpha -3},\ \ \varepsilon^{\alpha}, \ \ \varepsilon^{2\alpha -1}, \ \ \varepsilon^2, \ \ \varepsilon^{3\alpha -2}, \ \ \varepsilon^{\alpha +1}.
$$
To obtain an appropriate estimate in the case $\alpha = \frac32,$ we should consider the
regular ansatz in the form
$$
\mathcal{U}^{(i)}_\varepsilon =
w_0^{(i)}(y^{(i)}_1,t) + \varepsilon^{\frac12} w_{\frac12}^{(i)}(y^{(i)}_1,t) +  \sum_{\flat \in  \beth^\setminus \{0,\frac12\}}\varepsilon^\flat\Big(w_\flat^{(i)}(y^{(i)}_1,t)  + u_\flat^{(i)} \big(y^{(i)}_1, \tfrac{\overline{y}^{(i)}_1}{\varepsilon}, t \big)
     \Big),
$$
where the index set $\beth = \{0, \frac12, 1, \frac32, 2, \frac52\},$
$ w_2^{(i)} \equiv w_{\frac52 }^{(i)} \equiv 0.$

Therefore, in this section, we restrict ourselves to the  case $\alpha \in (\frac32 , 2)$, the results of which will show peculiarities for all
$\alpha \in (1, 2).$
\end{remark}

Substituting \eqref{regul_1} and \eqref{junc+1}  into the differential equation and boundary conditions of the problem \eqref{probl_rewrite},
collecting coefficients at the same power of $\varepsilon,$ by the same way as in \S \ref{regular_asymptotic} and \S \ref{subsec_Inner_part} we get the following problems.
\ 
The limit problem looks now as follows
\begin{equation}\label{limit_prob_>1}
 \left\{\begin{array}{rcll}
 \partial_t{w}^{(i)}_0 + \Big( v_i^{(i)}(y^{(i)}_1,t)\, w^{(i)}_0 \Big)^\prime &=& 0,&  (y^{(i)}_1, t) \in  I_i \times (0, T),
 \ \ i \in \{1,\ldots,M\},
 \\[1mm]
 \sum_{i=1}^{M}  h_i^2 \,\mathrm{v}_i(t)  \,  w_0^{(i)}(0, t) &=& 0, & t \in (0, T),
\\[1mm]
w_0^{(i)}(\ell_i,  t) & = & q_i(t), & t \in [0, T],  \ \  i \in \{1,\ldots,m\},
\\[1mm]
    w_0^{(i)}(y^{(i)}_1,  0) & = & 0, & y^{(i)}_1  \in [0, \ell_i],  \ \ i \in \{1,\ldots,M\}.
 \end{array}\right.
\end{equation}
The terms  $\{{w}^{(i)}_{\alpha-1}\}_{i=1}^M$ are a solution to the problem
\begin{equation}\label{limit_prob_alpha-1}
 \left\{\begin{array}{rcll}
 \partial_t{w}^{(i)}_{\alpha-1} + \Big( v_i^{(i)}(y^{(i)}_1,t)\, w^{(i)}_{\alpha-1} \Big)^\prime &=&  - \widehat{\varphi}^{(i)}\big(w^{(i)}_0, y^{(i)}_1, t\big),&  (y^{(i)}_1, t) \in  I_i \times (0, T),
 \\
 &&& i \in \{1,\ldots,M\},
 \\
 \sum_{i=1}^{M}  h_i^2 \,\mathrm{v}_i(t)  \,  w_{\alpha-1}^{(i)}(0, t) &=& 0, & t \in (0, T),
\\[1mm]
w_{\alpha-1}^{(i)}(\ell_i,  t) & = & 0, \quad t \in [0, T], &  i \in \{1,\ldots,m\},
\\[1mm]
    w_{\alpha-1}^{(i)}(y^{(i)}_1,  0) & = & 0, \quad y^{(i)}_1  \in [0, \ell_i], & i \in \{1,\ldots,M\},
 \end{array}\right.
\end{equation}
where $\widehat{\varphi}^{(i)}$ is determined in \eqref{hat_phi}. These problems are linear and due to the assumption made in Section~\ref{Sec:Statement} they have classical solutions; in addition the explicit  representations are possible  for them  (see \cite[\S 3.2.1]{Mel-Roh_preprint-2022}).

The differential equations of these problems are solvability conditions for the problems to determine  $\{u_1^{(i)}\}$ and $\{u_\alpha^{(i)}\},$ respectively. The problem for $u_1^{(i)}$ consists of the differential equation \eqref{eq_1} and the boundary condition \eqref{bc_1}, where $\varphi^{(i)}$ is absent. Thus,  $u_1^{(i)}$ is a unique solution to the Neumann problem
\begin{equation}\label{u_1}
\left\{
  \begin{array}{c}
\Delta_{\bar{\xi}_1}u^{(i)}_1\big(y^{(i)}_1, \bar{\xi}_1, t\big)
    =   w^{(i)}_0(y^{(i)}_1, t) \,   \mathrm{div}_{\bar{\xi}_1}\overline{V}^{(i)}(y^{(i)}_1, \bar{\xi}_1, t), \qquad \overline{\xi}_1\in
\Upsilon_i (y^{(i)}_1)
\\[2pt]
\partial_{\bar{\nu}_{\bar{\xi}_1}} u^{(i)}_1\big(y^{(i)}_1, \bar{\xi}_1, t\big) = w^{(i)}_{0}(y^{(i)}_1,t) \, \overline{V}^{(i)}(y^{(i)}_1, \bar{\xi}_1,t) \boldsymbol{\cdot} \bar{\nu}_{\xi_1}, \quad \bar{\xi}_1 \in \partial\Upsilon_i\big(y^{(i)}_1\big),
\\
\langle u_1^{(i)} \rangle_{\Upsilon_i\big(y^{(i)}_1\big)} =0;
  \end{array}
\right.
\end{equation}
$u_\alpha^{(i)}$ is a unique solution to the problem
\begin{equation}\label{u_alpha}
\left\{
  \begin{array}{c}
\Delta_{\bar{\xi}_1}u^{(i)}_\alpha   =   w^{(i)}_{\alpha-1}(y^{(i)}_1, t) \,   \mathrm{div}_{\bar{\xi}_1}\overline{V}^{(i)}(y^{(i)}_1, \bar{\xi}_1, t)
 - \widehat{\varphi}^{(i)}\big(w^{(i)}_0, y^{(i)}_1, t\big), \quad \overline{\xi}_1\in \Upsilon_i (y^{(i)}_1)
\\[2pt]
-\partial_{\bar{\nu}_{\bar{\xi}_1}} u^{(i)}_\alpha +  w^{(i)}_{\alpha-1} \, \overline{V}^{(i)}(y^{(i)}_1, \bar{\xi}_1,t) \boldsymbol{\cdot} \bar{\nu}_{\xi_1} = \varphi^{(i)}\big(w^{(i)}_{0}, y^{(i)}_1, \bar{\xi}_1, t\big), \quad \bar{\xi}_1 \in \partial\Upsilon_i\big(y^{(i)}_1\big),
\\[2pt]
\langle u_\alpha^{(i)} \rangle_{\Upsilon_i\big(y^{(i)}_1\big)} =0.
  \end{array}
\right.
\end{equation}

The conditions at the vertex in \eqref{limit_prob_>1} and \eqref{limit_prob_alpha-1}  are solvability conditions
for problems to determine  $\widetilde{N}_0$ and $\widetilde{N}_{\alpha-1},$ respectively. The problems for $\widetilde{N}_0$ and $\widetilde{N}_{\alpha-1}$ are coincide with \eqref{tilda_N_0_prob}, but the right-hand side
$$
g_{\alpha-1}^{(i)}(\xi^{(i)}_1,t)  =   w^{(i)}_{\alpha-1}(0,t) \, \chi''_{\ell_0}(\xi^{(i)}_1) - \mathrm{v}_i(t)  \, w^{(i)}_{\alpha-1}(0,t) \, \chi'_{\ell_0}(\xi^{(i)}_1).
$$
Thus, the coefficients ${N}_\flat $, $\flat \in \{0, \alpha -1\},$
have  the following asymptotics uniform with respect to $t\in [0, T]$:
\begin{equation}\label{rem_exp-decrease+alpha-1}
N_\flat(\xi,t) = w^{(i)}_{\flat}(0,t)  +  \mathcal{ O}(\exp(-\beta_0\xi_i))
\quad \mbox{as} \ \ \xi_i\to+\infty,  \ \  \xi  \in \Xi^{(i)},    \ \ i=\{1,\ldots,M\}.
\end{equation}

The problem for  the terms $\{{w}^{(i)}_{1}\}_{i=1}^M$ is  similar to \eqref{prob_w_1}, but  the right-hand sides $\{f_i\}_{i=1}^M,$ the coefficient~ ${\bf d}_1$ (see \eqref{d_1}) and $\{a_i\}_{i=1}^M$ (see \eqref{a_i}) don't contain summands with the functions $\{\varphi^{(i)}\}_{i=0}^M$ and their derivatives. The Kirchhoff  condition for $\{{w}^{(i)}_{1}\}_{i=1}^M$  is the solvability condition for the problem for determining the term $\widetilde{N}_1.$ This problem coincides with  \eqref{tilda_N_1_prob}, but
$\partial_{\boldsymbol{\nu}_\xi}  \widetilde{N}_1 = 0$ on the node boundary $\Gamma_0.$
Thus, ${N}_1$ has the asymptotics \eqref{rem_exp-decrease+1}.
 The differential equations for $\{{w}^{(i)}_{1}\}_{i=1}^M$ are the solvability conditions for the problems to determine $\{u^{(i)}_{2}\}_{i=1}^M,$ respectively.
Now, the problem for $u^{(i)}_{2}$ looks like this
 \begin{equation}\label{u_2}
\left\{
  \begin{array}{c}
\Delta_{\bar{\xi}_1}u^{(i)}_2
    =     \mathrm{div}_{\bar{\xi}_i} \Big(\big[ w^{(i)}_{1} + u^{(i)}_{1}\big]\,  \overline{V}^{(i)} \Big) + \Big(  v^{(i)}_1 \, u^{(i)}_{1} \Big)^\prime
   +   \partial_t {u}_1^{(i)}, \ \ \overline{\xi}_1\in
\Upsilon_i (y^{(i)}_1)
\\[2pt]
\partial_{\bar{\nu}_{\bar{\xi}_1}} u^{(i)}_2 = \big[ w^{(i)}_{1} + u^{(i)}_{1}\big]\,  \overline{V}^{(i)} \boldsymbol{\cdot} \bar{\nu}_{\xi_1}, \quad \bar{\xi}_1 \in \partial\Upsilon_i\big(y^{(i)}_1\big),
\quad \langle u_2^{(i)} \rangle_{\Upsilon_i\big(y^{(i)}_1\big)} =0.
  \end{array}
\right.
\end{equation}

For $\{w^{(i)}_{2\alpha-2}\}_{i=1}^M$ we get the problem
\begin{equation}\label{limit_prob_2alpha-2}
 \left\{\begin{array}{rcl}
 \partial_t{w}^{(i)}_{2\alpha-2} + \Big( v_i^{(i)}(y^{(i)}_1,t)\, w^{(i)}_{2\alpha-2} \Big)^\prime &=& - \partial_s\widehat{\varphi}^{(i)}\big(w^{(i)}_0, y^{(i)}_1, t\big)\, w^{(i)}_{\alpha-1} , \quad (y^{(i)}_1, t) \in  I_i \times (0, T),
 \\ &&   \hfill    i \in \{1,\ldots,M\},
 \\
 \sum_{i=1}^{M}  h_i^2 \,\mathrm{v}_i(t)  \,  w_{2\alpha-2}^{(i)}(0, t) &=& 0, \quad t \in (0, T),
\\[2mm]
w_{\alpha-1}^{(i)}(\ell_i,  t) & = & 0, \quad  t \in [0, T],  \ \  i \in \{1,\ldots,m\},
\\[2mm]
    w_{\alpha-1}^{(i)}(y^{(i)}_1,  0) & = & 0, \quad  y^{(i)}_1  \in [0, \ell_i],  \ \ i \in \{1,\ldots,M\},
 \end{array}\right.
\end{equation}
where $\widehat{\varphi}^{(i)}$ is defined in \eqref{hat_phi}. The gluing condition at the vertex  is the solvability condition for the problem
to determine $\widetilde{N}_{2\alpha -2}.$ It is coincide with the problem \eqref{tilda_N_0_prob}, but the right-hand side
$$
g_{2\alpha-2}^{(i)}(\xi^{(i)}_1,t)  =   w^{(i)}_{2\alpha-2}(0,t) \, \chi''_{\ell_0}(\xi^{(i)}_1) - \mathrm{v}_i(t)  \, w^{(i)}_{2\alpha-2}(0,t) \, \chi'_{\ell_0}(\xi^{(i)}_1).
$$
Thus, the coefficient ${N}_{2\alpha-2}$ has the asymptotics \eqref{rem_exp-decrease+alpha-1} for $\flat = 2 \alpha -2.$
The differential equations of \eqref{limit_prob_2alpha-2} are solvability conditions for problems to determine $\{u_{2\alpha-1}^{(i)}\}$:
\begin{equation}\label{u_2alpha-1}
\left\{
  \begin{array}{c}
\Delta_{\bar{\xi}_1}u^{(i)}_{2\alpha-1}
    =   w^{(i)}_{2\alpha-2} \,   \mathrm{div}_{\bar{\xi}_1}\overline{V}^{(i)}(y^{(i)}_1, \bar{\xi}_1, t)
 - \partial_s\widehat{\varphi}^{(i)}\big(w^{(i)}_0, y^{(i)}_1, t\big)\, w^{(i)}_{\alpha-1}, \quad  \overline{\xi}_1\in
\Upsilon_i (y^{(i)}_1)
\\[2pt]
-\partial_{\bar{\nu}_{\bar{\xi}_1}} u^{(i)}_{2\alpha-1} +  w^{(i)}_{2\alpha-2} \, \overline{V}^{(i)} \boldsymbol{\cdot} \bar{\nu}_{\xi_1} = \partial_s\varphi^{(i)}\big(w^{(i)}_{0}, y^{(i)}_1, \bar{\xi}_1, t\big)\, w^{(i)}_{\alpha-1}, \quad \bar{\xi}_1 \in \partial\Upsilon_i\big(y^{(i)}_1\big)
\\[2pt]
\langle u_{2\alpha-1}^{(i)} \rangle_{\Upsilon_i\big(y^{(i)}_1\big)} =0.
  \end{array}
\right.
\end{equation}

The coefficients  $\{w^{(i)}_{\alpha}\}_{i=1}^M$ are solution to the problem
\begin{equation}\label{prob_w_alpha}
 \left\{\begin{array}{rcll}
 \partial_t{w}^{(i)}_{\alpha}  + \Big( v_i^{(i)}(y^{(i)}_1,t)\, w^{(i)}_{\alpha}\Big)^\prime  & =&  f^{(i)}_\alpha  & \text{in} \  I_i \times (0, T),  \ \ i \in \{1,\ldots,M\},
 \\[3mm]
\sum_{i=1}^{M} h_i^2 \,  \mathrm{v}_i \,   w_\alpha^{(i)}(0, t) & = & {\bf d}_\alpha(t), & t \in (0, T),
 \\[2mm]
    w_\alpha^{(i)}(\ell_i,  t) & = & 0, & t \in [0, T],   \ \ i \in \{1,\ldots,m\},
\\[2mm]
    w_\alpha^{(i)}(x_i,  0) & = & 0, & x_i  \in [0, \ell_i],  \ \ i \in \{1,\ldots,M\},
  \end{array}\right.
\end{equation}
where
\begin{align*}%\label{f_alpha}
  f^{(i)}_\alpha(y^{(i)}_1,t) =  &  \ \Big( w^{(i)}_{\alpha-1}(y^{(i)}_1,t)  \Big)^{\prime\prime}  -
w^{(i)}_{1}(y^{(i)}_1, t) \, \partial_s \widehat{\varphi}^{(i)}\big(w^{(i)}_0(y^{(i)}_1,t), y^{(i)}_1, t\big)
\\
- &  \ \frac{1}{\pi h_i^2} \int_{\partial \Upsilon_i\big(y^{(i)}_1\big)} \partial_s\varphi^{(i)}\big(w^{(i)}_{0}(y^{(i)}_1,t), y^{(i)}_1, \bar{\xi}_1, t\big)
\, u^{(i)}_{1}(y^{(i)}_1,\bar{\xi}_1, t) \,  d\sigma_{\bar{\xi}_1},  \notag
\end{align*}
\begin{align}\label{d_alpha}
 {\bf d}_\alpha(t) :=&  - \frac{1}{\pi}\int\limits_{\Gamma_0} \varphi^{(0)}\big(N_0, \xi,t\big)\, d\sigma_\xi - \frac{1}{\pi}\int\limits_{\Xi^{(0)}} \partial_t {N}_{\alpha-1}(\xi,t)\, d\xi - \frac{1}{\pi} \sum_{i=1}^{M}\int\limits_{\Xi^{(i)}}  \partial_t \widetilde{N}_{\alpha-1}(\xi,t) \, d\xi
\notag
\\
 & + \sum_{i=1}^{M}h^2_i \, \partial_{y_1^{(i)} }w_{\alpha-1}^{(i)}(0,t)  \Big(1 - \mathrm{v}_i(t)  \,  \int_{2\ell_0}^{3\ell_0} \xi^{(i)}_1\,  \chi^\prime_{\ell_0}(\xi^{(i)}_1) \, d\xi^{(i)}_1 \Big).
\end{align}

The Kirchhoff  condition in \eqref{prob_w_alpha}  is the solvability condition for the problem
\begin{equation}\label{tilda_N_alpha}
\left\{\begin{array}{rcll}
-   \Delta_\xi \widetilde{N}_\alpha +
  \overrightarrow{V}(\xi,t) \boldsymbol{\cdot} \nabla_\xi \widetilde{N}_\alpha(\xi,t) & = &  - \partial_t {{N}}_{\alpha - 1}(\xi,t), &  \xi \in\Xi^{(0)},
\\[2mm]
- \partial_{\boldsymbol{\nu}_\xi}  \widetilde{N}_\alpha(\xi,t) &=& \varphi^{(0)}\big(N_0(\xi,t), \xi,t\big) , &   \xi \in \Gamma_0,
\\[2mm]
 -   \Delta_{\xi^{(i)}} \widetilde{N}_\alpha  +
  \mathrm{v}_i(t) \, \partial_{\xi^{(i)}_1}\widetilde{N}_\alpha(\xi^{(i)},t) & = &  - \partial_t \widetilde{N}_{\alpha -1} + g_\alpha^{(i)}, &
     \xi^{(i)} \in\Xi^{(i)},
\\[2mm]
\partial_{\bar{\nu}^{(i)}} \widetilde{N}_\alpha(\xi^{(i)},t)  &=&  0, &
   \xi^{(i)} \in \Gamma_i,
\\[2mm]
\widetilde{N}_\alpha(\xi^{(i)},t) \ \rightarrow \ 0 &\text{as} & \xi^{(i)}_1 \to +\infty, &  \xi^{(i)}  \in \Xi^{(i)}, \quad i\in \{1,\ldots,M\},
 \end{array}\right.
\end{equation}
where
\begin{align*}%\label{g_alpha}
g_\alpha^{(i)}(\xi^{(i)}_1,t) = & \  w^{(i)}_\alpha(0,t) \, \chi''_{\ell_0}(\xi^{(i)}_1) + \dfrac{\partial w_{\alpha -1}^{(i)}}{\partial y^{(i)}_1}(0,t)\,   \Big( \big(\xi^{(i)}_1 \chi_{\ell_0}^{\prime}(\xi^{(i)}_1)\big)^\prime
 +  \chi_{\ell_0}^{\prime}(\xi^{(i)}_1) \Big)  \notag
\\
- & \ \mathrm{v}_i(t)  \Big( w^{(i)}_\alpha(0,t)  + \dfrac{\partial w_{\alpha-1}^{(i)}}{\partial y^{(i)}_1}(0,t) \,  \xi^{(i)}_1 \Big) \chi^\prime_{\ell_0}(\xi^{(i)}_1).
\end{align*}
Thus, ${N}_\alpha$ has the asymptotics
\begin{equation}\label{expdecrease_alpha}
N_\alpha(\xi,t) = w^{(i)}_{\alpha}(0,t)  +  \Psi^{(i)}_{\alpha}(\xi_i,t) + \mathcal{ O}(\exp(-\beta_0\xi_i))
\quad \mbox{as} \ \ \xi_i\to+\infty,  \ \  \xi  \in \Xi^{(i)} \quad (\beta_0 >0),
\end{equation}
uniform with respect to $t\in [0, T],$ where $\Psi_{\alpha}^{(i)}(\xi^{(i)}_1,t)
 =   \xi^{(i)}_1 \, \partial_{\partial y^{(i)}_1}  w_{\alpha -1}^{(i)}(0,t).$

The differential equations of \eqref{prob_w_alpha} are solvability conditions for problems to determine $\{u_{\alpha+1}^{(i)}\}_{i=1}^M$:
\begin{equation*}%\label{u_al_1}
\left\{
  \begin{array}{c}
\hskip-2.7cm  \Delta_{\bar{\xi}_1}u^{(i)}_{\alpha+1}
  =  \Big(  v^{(i)}_1(y^{(i)}_1, t) \, \big[ w^{(i)}_{\alpha} + u^{(i)}_{{\alpha}}\big] \Big)^\prime
  + \partial_t {w}_{\alpha}^{(i)}  +   \partial_t {u}_{\alpha}^{(i)}
  \\
 \hfill  + \  \mathrm{div}_{\bar{\xi}_i} \Big( \overline{V}^{(i)}(y^{(i)}_1, \bar{\xi}_1,t) \,
            \big[ w^{(i)}_{{\alpha}} + u^{(i)}_{{\alpha}} \big]\Big)  - \Big( w^{(i)}_{\alpha-1}(y^{(i)}_1,t)  \Big)^{\prime\prime},
\quad  \bar{\xi}_1 \in \Upsilon_i\big(y^{(i)}_1\big),
\\[3pt]
- \partial_{\bar{\nu}_{\bar{\xi}_1}} u^{(i)}_{\alpha+1}+ \big( w^{(i)}_{\alpha} + u^{(i)}_{\alpha}\big) \, \overline{V}^{(i)} \boldsymbol{\boldsymbol{\cdot}} \bar{\nu}_{\xi_1} =
       \partial_s\varphi^{(i)}\big(w^{(i)}_{0}, y^{(i)}_1, \bar{\xi}_1, t\big) \, \big( w^{(i)}_{1} + u^{(i)}_{1}\big), \quad \bar{\xi}_1 \in \partial \Upsilon_i\big(y^{(i)}_1\big),
\\[2pt]
\langle u_{\alpha +1}^{(i)} \rangle_{\Upsilon_i\big(y^{(i)}_1\big)} =0.
  \end{array}
\right.
\end{equation*}

Similar as in \S \ref{subsec_Bound_layer}, we find  the coefficients of the boundary-layer ansatz
\eqref{prim_1}. For each $i \in \{m+1,\ldots,M\}$ the coefficients $\Pi^{(i)}_0,$ $\Pi^{(i)}_1$ and $\Pi^{(i)}_2$  are solutions to the problems  \eqref{prim+probl+0} and \eqref{prim+probl+k}, respectively;
the coefficients $\Pi^{(i)}_{\alpha -1}$  and $\Pi^{(i)}_{2\alpha -2}$  are solutions to the problem  \eqref{prim+probl+0} with
the boundary condition $\Pi^{(i)}_{\alpha -1}\big|_{\Upsilon^{(i)}} = - w_{\alpha -1}^{(i)}(\ell_i,t) $  and
$\Pi^{(i)}_{2\alpha -2}\big|_{\Upsilon^{(i)}} = - w_{2\alpha -2}^{(i)}(\ell_i,t)$, respectively; and
$\Pi^{(i)}_{\alpha}$  is a solution to the problem  \eqref{prim+probl+k} with the right-hand side $\partial_{t}\Pi_{\alpha-1}^{(i)}$
and boundary condition $\Pi^{(i)}_{\alpha}\big|_{\Upsilon^{(i)}} = - w_{\alpha}^{(i)}(\ell_i,t).$

Thus, all coefficients in \eqref{regul_1}--\eqref{junc+1}  are determined.
\begin{remark}\label{Rem_4_2}
When calculating discrepancies, the terms
$\big(u_2^{(i)}\big)^{\prime\prime},$  $\big(u_{2\alpha -1}^{(i)}\big)^{\prime\prime}$
and $\big(u_{\alpha +1}^{(i)}\big)^{\prime\prime}$ appear. This means that $\{w_\flat^{(i)}\}$ must have additional smoothness, e.g.,
$\{w^{(i)}_{0}\}$ and $\{w^{(i)}_{\alpha-1}\}$ must have the $C^4$-smoothness.
Since  all problems for determining the coefficients are linear, the following additional assumption are needed:
\ the boundedness of the derivatives of $\{\varphi^{(i)}\}_{i=1}^M$ up to  the third order in the variables $s$
 and $y^{(i)}_1$ from $X_i$, ${v}^{(i)}_1 \in C^{4}([0, \ell_i]),$ $\partial_{\xi_2}\overline{V}^{(i)}\in C^{2}([0, \ell_i]),$
$\partial_{\xi_3}\overline{V}^{(i)}\in C^{2}([0, \ell_i])$ for $i\in \{1,\ldots,M\},$
 \eqref{match_conditions+} and \eqref{add_phi_0}.
\end{remark}

Now, using the ansatzes \eqref{regul_1}--\eqref{junc+1}, we construct  the approximation function
$\mathfrak{A}^\alpha_\varepsilon$ according to the formula \eqref{first_app} and calculate residuals that $\mathfrak{A}^\alpha_\varepsilon$ leaves in the problem \eqref{probl_rewrite}. Similar as \S~\ref{Sec:justification}, we get the following statement
on a difference equation.

\begin{lemma}\label{Prop-4-1} There is a number $\varepsilon_0>0$ such that for each $\varepsilon\in (0, \varepsilon_0)$  the difference between $\mathfrak{A}^\alpha_\varepsilon$ and the solution to the problem \eqref{probl_rewrite} satisfies the following relations for all $t\in (0,T):$
\begin{gather*}%\label{dif_1}
  \partial_t(\mathfrak{A}^\alpha_\varepsilon - u_\varepsilon) -  \varepsilon\, \Delta_x \big(\mathfrak{A}^\alpha_\varepsilon - u_\varepsilon\big) +
   \overrightarrow{V_\varepsilon}^{(0)} \cdot \nabla_x(\mathfrak{A}^\alpha_\varepsilon - u_\varepsilon)  =  {\varepsilon}\,  \mathcal{R}^{(0)}_\varepsilon \quad  \text{in} \ \ \Omega_\varepsilon^{(0)},
\\
 \partial_t(\mathfrak{A}^\alpha_\varepsilon - u_\varepsilon) -  \varepsilon\, \Delta_{y^{(i)}} \big(\mathfrak{A}^\alpha_\varepsilon - u_\varepsilon\big) +
   \mathrm{v}_i \,\partial_{y^{(i)}}(\mathfrak{A}^\alpha_\varepsilon - u_\varepsilon)  =  {\varepsilon^\gamma}\,  \mathcal{R}^{(i)}_{\varepsilon,\ell_0}  \quad
    \text{in} \ \ \Omega_\varepsilon^{(i)}\setminus \Omega^{(i)}_{\varepsilon,3\ell_0,\gamma},
\\
  \partial_t(\mathfrak{A}^\alpha_\varepsilon - u_\varepsilon)   -  \varepsilon\, \Delta_{y^{(i)}} (\mathfrak{A}^\alpha_\varepsilon - u_\varepsilon)\big) +
  \mathrm{div}_{y^{(i)}} \big( \overrightarrow{V_\varepsilon}^{(i)} \, (\mathfrak{A}^\alpha_\varepsilon - u_\varepsilon)\big)
   =  {\varepsilon^{2\alpha-2}} \, \mathcal{R}^{(i)}_\varepsilon \ \  \text{in} \ \ \Omega^{(i)}_{\varepsilon,3\ell_0,\gamma},
\\
 -   \partial_{\boldsymbol{\nu}_\varepsilon}\big(\mathfrak{A}^\alpha_\varepsilon - u_\varepsilon\big)  -   \varepsilon^{\alpha -1} \varphi^{(0)}_\varepsilon\big(\mathfrak{A}^\alpha_\varepsilon,x,t)  + \varepsilon^{\alpha -1} \varphi^{(0)}_\varepsilon\big(u_\varepsilon,x,t) =  {\varepsilon^{2\alpha-2}}\, \Phi^{(0)}_\varepsilon  \quad
    \text{on} \ \ \Gamma_\varepsilon^{(0)},
\\
  -   \partial_{\boldsymbol{\nu}_\varepsilon}\big(\mathfrak{A}^\alpha_\varepsilon - u_\varepsilon\big)  =  0  \quad
    \text{on} \ \ \Gamma_\varepsilon^{(i)}, \ \ y^{(i)}_1\in [\ell_0 \varepsilon, 3\ell_0 \varepsilon^\gamma]
\\
 -    \partial_{\overline{\nu}_\varepsilon}(\mathfrak{A}^\alpha_\varepsilon - u_\varepsilon) +  (\mathfrak{A}^\alpha_\varepsilon - u_\varepsilon) \, \overline{V}^{(i)}_\varepsilon\boldsymbol{\cdot}\overline{\nu}_\varepsilon\,
 -  \,  \varepsilon^{\alpha -1} \varphi^{(i)}_\varepsilon\big(\mathfrak{A}^\alpha_\varepsilon, y^{(i)},t)  \, + \,\varepsilon^{\alpha -1} \varphi^{(i)}_\varepsilon\big(u_\varepsilon, y^{(i)},t)
\\
  =  {\varepsilon^{\alpha}} \Phi_\varepsilon^{(i)} \ \     \text{on} \   \Gamma_\varepsilon^{(i)}, \ \ y^{(i)}_1\in [3\ell_0 \varepsilon^\gamma, \ell_i]
\\
 (\mathfrak{A}^\alpha_\varepsilon - u_\varepsilon)\big|_{x_i= \ell_i}
 = 0 \ \  \text{on} \  \ \Upsilon_{\varepsilon}^{(i)} (\ell_i),
\  i\in\{1,\ldots,M\}, \qquad  (\mathfrak{A}^\alpha_\varepsilon - u_\varepsilon)\big|_{t=0}
  = 0 \ \ \text{on} \ \ \Omega_{\varepsilon},
\end{gather*}
where  the vector-function $\overline{V}^{(i)}_\varepsilon$ is defined in \eqref{V_i}, $\gamma$ is a fixed number from $(\frac23, 1),$
\begin{gather*}%\label{Res_10}
  \sup_{(\Omega^{(i)} _{\varepsilon} \setminus \Omega^{(i)}_{\varepsilon,3\ell_0,\gamma}) \times (0,T)} |\mathcal{R}^{(i)}_{\varepsilon,\ell_0}| +
  \sup_{\Omega^{(i)} _{\varepsilon}\times (0,T)} |\mathcal{R}^{(i)}_\varepsilon| \le C_i, \quad i\in\{1,\ldots,M\},
\\
\sup_{\Omega^{(0)} _{\varepsilon}\times (0,T)} |\mathcal{R}^{(0)}_\varepsilon| \le C_0, \quad   \sup_{\Gamma^{(i)}_{\varepsilon} \times (0,T)} |\Phi_\varepsilon^{(i)}| \le \tilde{C}_i, \quad i\in\{0,\ldots,M\},
\end{gather*}
and the support of $\{\Phi_\varepsilon^{(i)}\}_{i=1}^M$  with respect to $y^{(i)}_1$  lies in $(\varepsilon \ell_0, \ell_i)$ uniformly in $t\in [0, T]$.
\end{lemma}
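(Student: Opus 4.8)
The plan is to mimic \S\ref{Sec:justification} step by step, now with the coefficients of the two-parameter ansatzes \eqref{regul_1}--\eqref{junc+1} in place of those of \eqref{regul}--\eqref{junc}. I would first assemble $\mathfrak{A}^\alpha_\varepsilon$ by the template \eqref{first_app}: the regular part $\mathcal{U}^{(i)}_\varepsilon$ on $\Omega^{(i)}_{\varepsilon,3\ell_0,\gamma}$, augmented by $\chi_\delta^{(i)}\mathcal{B}^{(i)}_\varepsilon$ on each outlet cylinder, the node-layer part $\mathcal{N}_\varepsilon$ on $\Omega^{(0)}_{\varepsilon,\gamma}$, and the cut-off combination $\chi_{\ell_0}(y^{(i)}_1/\varepsilon^\gamma)\,\mathcal{U}^{(i)}_\varepsilon+\bigl(1-\chi_{\ell_0}(y^{(i)}_1/\varepsilon^\gamma)\bigr)\mathcal{N}_\varepsilon$ on the transition zones $\Omega^{(i)}_{\varepsilon,2\ell_0,3\ell_0,\gamma}$, with $\gamma\in(\frac23,1)$ fixed and $\delta$ so small that $\chi_\delta^{(i)}$ vanishes on $\mathrm{supp}\,\varphi^{(i)}_\varepsilon$. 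Since every coefficient was chosen so that the limit problems \eqref{limit_prob_>1}, \eqref{limit_prob_alpha-1}, \eqref{limit_prob_2alpha-2}, \eqref{prob_w_alpha}, the cell problems \eqref{u_1}, \eqref{u_alpha}, \eqref{u_2}, \eqref{u_2alpha-1} and the one for $u^{(i)}_{\alpha+1}$, the node-layer problems for $N_0,N_{\alpha-1},N_1,N_{2\alpha-2},N_\alpha$ (cf.\ \eqref{tilda_N_alpha}) and the boundary-layer problems \eqref{prim+probl+0}--\eqref{prim+probl+k} all hold, $\mathfrak{A}^\alpha_\varepsilon$ automatically satisfies $\mathfrak{A}^\alpha_\varepsilon|_{t=0}=0$ and $\mathfrak{A}^\alpha_\varepsilon|_{y^{(i)}_1=\ell_i}=q_i$, which gives the last two displayed relations. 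The remaining relations are then obtained by substituting $\mathfrak{A}^\alpha_\varepsilon$ into \eqref{probl_rewrite}, subtracting the equations satisfied by $u_\varepsilon$, and leaving the differences $\varphi^{(i)}_\varepsilon(\mathfrak{A}^\alpha_\varepsilon)-\varphi^{(i)}_\varepsilon(u_\varepsilon)$ unexpanded on the boundaries.

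The work is to estimate the discrepancy that $\mathfrak{A}^\alpha_\varepsilon$ leaves in each region. On $\Omega^{(i)}_{\varepsilon,3\ell_0,\gamma}$ I would propagate the finite scale $\{\varepsilon^{\flat}:\flat\in\beth\}$ through $\partial_t-\varepsilon\Delta_{y^{(i)}}+\mathrm{div}_{y^{(i)}}(\overrightarrow{V_\varepsilon}^{(i)}\,\cdot\,)$ and through the $s$-Taylor expansion of $\varphi^{(i)}$ on $\Gamma^{(i)}_\varepsilon$, exactly as in the derivation of \eqref{ref_1}--\eqref{res_3}; all contributions of order $\varepsilon^\flat$ with $\flat\in\{0,\alpha-1,1,2\alpha-2,\alpha\}$ cancel by the problems just listed, leaving an equation residual of the announced order $\varepsilon^{2\alpha-2}\mathcal{R}^{(i)}_\varepsilon$ (in fact of order $\varepsilon^2$, as in \eqref{res_2}, since $2>2\alpha-2$) and a boundary residual $\varepsilon^\alpha\Phi^{(i)}_\varepsilon$. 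Uniform boundedness of $\mathcal{R}^{(i)}_\varepsilon$ and $\Phi^{(i)}_\varepsilon$ rests on the uniform boundedness of all coefficients and of the derivatives of them occurring there, which is precisely where the additional smoothness recorded in Remark~\ref{Rem_4_2} is needed (to make sense of the $(u^{(i)}_2)''$, $(u^{(i)}_{2\alpha-1})''$, $(u^{(i)}_{\alpha+1})''$ that appear). For an outlet cylinder the term $\chi_\delta^{(i)}\mathcal{B}^{(i)}_\varepsilon$ contributes, as in \eqref{Res_3}, only through derivatives of $\chi_\delta^{(i)}$, where the profiles $\Pi_\flat^{(i)}$ decay exponentially (cf.\ \eqref{exp_decay}), so it is negligible and, by the choice of $\delta$, leaves the nonlinear condition on $\Gamma^{(i)}_\varepsilon$ untouched. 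On $\Omega^{(0)}_{\varepsilon,\gamma}$ the node ansatz satisfies the interior equations through order $\varepsilon^\alpha$ by the problems for $N_0,N_{\alpha-1},N_1,N_{2\alpha-2},N_\alpha$, the convective field being exactly $\mathrm{v}_i\mathbf{e}^{(i)}$ next to the bases of the cylinders; the surviving term $\varepsilon\,\partial_tN_1$ gives the equation residual $\varepsilon\,\mathcal{R}^{(0)}_\varepsilon$, while on $\Gamma^{(0)}_\varepsilon$ the first uncancelled Taylor term is $\varepsilon^{\alpha-1}\partial_s\varphi^{(0)}(N_0)\cdot\varepsilon^{\alpha-1}N_{\alpha-1}$, so the boundary residual is $\varepsilon^{2\alpha-2}\Phi^{(0)}_\varepsilon$ with $\Phi^{(0)}_\varepsilon$ bounded and supported in $\mathrm{supp}\,\varphi^{(0)}_\varepsilon$ (elsewhere $\partial_{\boldsymbol{\nu}_\varepsilon}\mathcal{N}_\varepsilon=0$).

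On the transition zone $\Omega^{(i)}_{\varepsilon,2\ell_0,3\ell_0,\gamma}$ I would Taylor-expand each $w^{(i)}_\flat$ at $y^{(i)}_1=0$ and use the matching form $N_\flat\sim w^{(i)}_\flat(0,t)+\Psi^{(i)}_\flat+\mathcal{O}(e^{-\beta_0\xi_i})$ from \eqref{rem_exp-decrease+alpha-1}, \eqref{rem_exp-decrease+1}, \eqref{expdecrease_alpha}: the polynomial parts of the $N_\flat$ cancel against the regular part, leaving (a) terms carrying $\chi_{\ell_0}'$ or $\chi_{\ell_0}''$ times the remainders $\widetilde{N}_\flat$, which here are of size $e^{-2\beta_0\ell_0\varepsilon^{\gamma-1}}$, and (b) higher Taylor remainders of order $\varepsilon^\gamma$ produced by the scaling $y^{(i)}_1\mapsto y^{(i)}_1/\varepsilon^\gamma$; on $\Gamma^{(i)}_\varepsilon$ with $y^{(i)}_1\in[\varepsilon\ell_0,3\ell_0\varepsilon^\gamma]$ the data $\overline{V}^{(i)}_\varepsilon$, $\varphi^{(i)}_\varepsilon$, $\Phi^{(i)}_\varepsilon$ vanish, so the normal derivative of the difference vanishes there. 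The subregion $y^{(i)}_1\in[\varepsilon\ell_0,2\ell_0\varepsilon^\gamma]$ falls under the node case, with residual of order $\varepsilon^\alpha$, hence $\le\varepsilon^\gamma$ since $\gamma<1<\alpha$, so on all of $\Omega^{(i)}_\varepsilon\setminus\Omega^{(i)}_{\varepsilon,3\ell_0,\gamma}$ the residual is $\varepsilon^\gamma\mathcal{R}^{(i)}_{\varepsilon,\ell_0}$. Finally the $y^{(i)}_1$-support statement for $\{\Phi^{(i)}_\varepsilon\}$ follows, as in Remark~\ref{r_2_1}, from the compact $y^{(i)}_1$-support of $\varphi^{(i)}$ and $\overline{V}^{(i)}$; taking $\varepsilon_0$ so small that the cut-off zones are pairwise disjoint finishes the argument.

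The main obstacle I anticipate is the $\alpha$-dependent power bookkeeping of the two middle paragraphs: one must verify that the index set $\beth$ is rich enough that \emph{every} contribution below the announced thresholds (namely $\varepsilon^{2\alpha-2}$ on $\Gamma^{(0)}_\varepsilon$, $\varepsilon^\alpha$ on the far part of $\Gamma^{(i)}_\varepsilon$, $\varepsilon$ in $\Omega^{(0)}_\varepsilon$, and $\varepsilon^\gamma$ in the transition zones) has indeed been annihilated by one of the cell, limit or node problems. The delicate point is the interleaving of the $\alpha$-indexed powers with the integer ones and, on the boundaries, the extra powers (such as $\varepsilon^{3\alpha-3}$) produced by the quadratic Taylor remainder $\frac12\partial^2_{ss}\varphi^{(i)}$ as in \eqref{res_3}, each of which has to be checked to lie below the relevant threshold throughout the range $\alpha\in(\frac32,2)$. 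Everything else is a verbatim transcription of \S\ref{Sec:justification}.
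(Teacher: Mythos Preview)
Your proposal is correct and follows exactly the route the paper takes: the paper's own justification of this lemma is the single sentence ``Similar as \S\ref{Sec:justification}, we get the following statement,'' and you have faithfully (and in considerably more detail than the paper) carried out that transcription, correctly identifying the leading residual orders in each region and the role of the exponential decay of $\widetilde{N}_\flat$ and $\Pi_\flat^{(i)}$ in the transition and boundary-layer zones. Your caveat about the $\alpha$-dependent power bookkeeping is apt but not an obstruction: for $\alpha\in(\tfrac32,2)$ the index set $\beth$ was designed precisely so that every sub-threshold contribution is matched by one of the listed problems, and the quadratic Taylor remainders on the boundaries produce powers $\ge 2(\alpha-1)$ on $\Gamma^{(0)}_\varepsilon$ and $\ge \alpha$ on $\Gamma^{(i)}_\varepsilon$, as required.
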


Then, repeating the proof of Theorem ~\ref{Th_1} and taking into account that the smallest order of residuals in the relations of Lemma~\ref{Prop-4-1} is equal to $\varepsilon^{{\gamma}} $ $\big(\alpha \in (\frac32 , 2)\big),$ we obtain the assertion.

\begin{theorem}\label{Th_2} Let assumptions made in Section~\ref{Sec:Statement}, in Remark~\ref{Rem_4_2} and in \eqref{add_phi_0} are satisfied. Then
  \begin{equation}\label{max_2}
  \max_{\overline{\Omega_\varepsilon}\times [0, T]} |\mathfrak{A}^\alpha_\varepsilon - u_\varepsilon|  \le C_0(T) \, \varepsilon^{{\gamma}}
\end{equation}
holds for $\varepsilon$ small enough, where  $\gamma$ is a fixed number from the interval $(\frac23, 1).$
\end{theorem}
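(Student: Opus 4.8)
The plan is to re-run the maximum-principle argument from the proof of Theorem~\ref{Th_1}, now feeding in the difference relations of Lemma~\ref{Prop-4-1} in place of those of Lemma~\ref{Prop-3-1}. Write $U_\varepsilon=\mathfrak{A}^\alpha_\varepsilon-u_\varepsilon$ and pass to the weighted unknown $\Psi_\varepsilon=U_\varepsilon e^{-\lambda t}$ with the same $\varepsilon$-independent constant $\lambda$ as in \eqref{e_7} (chosen from the $L^\infty$-bounds of $\partial_{y^{(i)}_1}v^{(i)}_1$ and $\mathrm{div}_{\bar\xi_1}\overline V^{(i)}$). Substituting into the relations of Lemma~\ref{Prop-4-1} produces, in each of the subdomains $\Omega^{(0)}_\varepsilon$, $\Omega^{(i)}_\varepsilon\setminus\Omega^{(i)}_{\varepsilon,3\ell_0,\gamma}$ and $\Omega^{(i)}_{\varepsilon,3\ell_0,\gamma}$, parabolic equations for $\Psi_\varepsilon$ of exactly the form \eqref{eq_psi1}--\eqref{eq_psi3} with an additional nonnegative zeroth-order term $+\lambda\Psi_\varepsilon$ and with right-hand sides of orders $\varepsilon$, $\varepsilon^{\gamma}$ and $\varepsilon^{2\alpha-2}$, respectively; likewise the Robin conditions on $\Gamma^{(0)}_\varepsilon$ and on the long part of $\Gamma^{(i)}_\varepsilon$ take the form \eqref{eq_4}--\eqref{eq_5} but with the zeroth-order boundary coefficient $\varepsilon^{\alpha-1}\partial_s\varphi^{(0)}_\varepsilon(\theta_0,\cdot)$, respectively $\varepsilon^{\alpha-1}\partial_s\varphi^{(i)}_\varepsilon(\theta_i,\cdot)-\overline V^{(i)}\cdot\overline\nu_\varepsilon$, obtained by applying the mean value theorem to the $\varphi$-differences, and with data of orders $\varepsilon^{2\alpha-2}$ and $\varepsilon^{\alpha}$.

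Next, fix $t_1\in(0,T)$ and split into the three cases used in the proof of Theorem~\ref{Th_1}. If the positive maximum of $\Psi_\varepsilon$ over $\overline{\Omega_\varepsilon}\times[0,t_1]$ is either nonpositive or attained on the Dirichlet part $\bigcup_i\Upsilon^{(i)}_\varepsilon(\ell_i)$ (where $U_\varepsilon=0$), there is nothing to prove. If it is attained at an interior point $P_0$, then $\partial_t\Psi_\varepsilon\ge0$, $\nabla\Psi_\varepsilon=\vec{0}$ and $-\Delta\Psi_\varepsilon\ge0$ there, so the weighted equations yield $\Psi_\varepsilon(P_0)\le\varepsilon^{\gamma}\big(\sum_i\sup|\mathcal R^{(i)}_\varepsilon|+\sum_i\sup|\mathcal R^{(i)}_{\varepsilon,\ell_0}|+\sup|\mathcal R^{(0)}_\varepsilon|\big)$, because $\varepsilon\le\varepsilon^{\gamma}$ and $\varepsilon^{2\alpha-2}\le\varepsilon^{\gamma}$ for $\gamma<1<2\alpha-2$. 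If the maximum lies on the lateral boundary, reintroduce the barrier functions $z_\varepsilon$ on $\overline{\Omega^{(i)}_\varepsilon}$ (with the constant $\varpi_i$ from \eqref{m_1}) and $z^{(0)}_\varepsilon$ on a small ball touching $\Gamma^{(0)}_\varepsilon$ at the maximum point, set $\mathcal K_\varepsilon=\Psi_\varepsilon/z_\varepsilon$, and observe that $\mathcal K_\varepsilon$ solves an equation of type \eqref{eta_1} whose zeroth-order coefficient is bounded below by a positive constant once $\lambda$ is chosen $\varepsilon$-independently; the Hopf-type boundary relation at the maximum then gives $\Psi_\varepsilon\le c\,\varepsilon^{\alpha}\sup|\Phi^{(i)}_\varepsilon|$ on $\Gamma^{(i)}_\varepsilon$ and $\Psi_\varepsilon\le c\,\varepsilon^{2\alpha-2}\sup|\Phi^{(0)}_\varepsilon|$ on $\Gamma^{(0)}_\varepsilon$. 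Applying the same reasoning to $-\Psi_\varepsilon$ for the minimum and multiplying back by $e^{\lambda t}$ gives $\max_{\overline{\Omega_\varepsilon}\times[0,T]}|U_\varepsilon|\le C_0(T)\varepsilon^{\gamma}$, which is \eqref{max_2}.

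The only point that needs a word beyond a verbatim copy of the earlier argument is the effect of the extra factor $\varepsilon^{\alpha-1}$ in the semilinear boundary terms: since $\alpha>1$ this factor tends to $0$, so after the $z_\varepsilon$-substitution the boundary coefficient $\varpi_i+\varepsilon^{\alpha-1}\partial_s\varphi^{(i)}_\varepsilon(\theta_i,\cdot)-\overline V^{(i)}\cdot\overline\nu_\varepsilon$ is bounded below by $1$ for all $\varepsilon$ small enough as soon as $\varpi_i$ dominates $|\overline V^{(i)}|$ and the uniform bound of $\partial_s\varphi^{(i)}$, with no use of the sign condition \eqref{zero_2}; the same remark applies on $\Gamma^{(0)}_\varepsilon$. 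The genuinely governing quantity is the residual of order $\varepsilon^{\gamma}$ left in the node-matching collar $\Omega^{(i)}_\varepsilon\setminus\Omega^{(i)}_{\varepsilon,3\ell_0,\gamma}$: every other residual order in Lemma~\ref{Prop-4-1}, namely $\varepsilon$, $\varepsilon^{2\alpha-2}$ and $\varepsilon^{\alpha}$, is $\le\varepsilon^{\gamma}$ for $\gamma\in(\tfrac23,1)$ and $\alpha\in(\tfrac32,2)$, so it is this collar term that fixes the exponent $\gamma$ in \eqref{max_2}. The main (and essentially only) obstacle is therefore the bookkeeping of these competing powers together with the $\varepsilon$-uniform choice of $\lambda$ and $\{\varpi_i\}_{i=0}^{M}$; once that is arranged, the three-case comparison argument of Theorem~\ref{Th_1} carries over without change.
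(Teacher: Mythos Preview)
Your proposal is correct and follows exactly the approach the paper takes: the paper's own proof consists of a single sentence stating that one repeats the proof of Theorem~\ref{Th_1} using the residual relations of Lemma~\ref{Prop-4-1} and noting that the smallest residual order there is $\varepsilon^{\gamma}$. Your write-up is simply a more detailed unpacking of that instruction, including the correct observation that the extra factor $\varepsilon^{\alpha-1}$ in the boundary coefficients only helps (so the barrier argument with the same $\varpi_i$ goes through unchanged).
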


We can regard that $\gamma > \alpha -1.$ Therefore,  from \eqref{max_2} it follows  that terms of the order $\mathcal{O}(\varepsilon)$ are redundant in the approximation $\mathfrak{A}^\alpha_\varepsilon$, and the asymptotic estimate of this accuracy can be  obtained for the approximation
\begin{equation*}%\label{zero_alpha_ app}
\mathfrak{A}_{0,\alpha, \varepsilon} =
\left\{
  \begin{array}{l}
   w_0^{(i)}(y^{(i)}_1,t) + \varepsilon^{\alpha -1} w_{\alpha -1}^{(i)}(y^{(i)}_1,t) \hfill  \text{in} \  \ \Omega^{(i)}_{\varepsilon,3\ell_0,\gamma}, \ \ i\in\{1,\ldots,m\},
\\[4pt]
   w_0^{(i)}(y^{(i)}_1,t) + \varepsilon^{\alpha -1} w_{\alpha -1}^{(i)}(y^{(i)}_1,t)
   + \chi_\delta^{(i)}(y^{(i)}_1) \Big(\Pi_0^{(i)}
    \big(\frac{\ell_i - y^{(i)}_1}{\varepsilon}, \frac{y^{(i)}_2}{\varepsilon}, \frac{y^{(i)}_3}{\varepsilon}, t\big)
    + \varepsilon^{\alpha -1} \Pi_{\alpha -1}^{(i)}\Big)
     \\
      \hfill  \text{in}  \ \ 
    \Omega^{(i)}_{\varepsilon,3\ell_0,\gamma},
    \ \   i\in\{m+1,\ldots,M\},
    \\[6pt]
    N_0\left(\frac{x}{\varepsilon},t\right) + \varepsilon^{\alpha -1} N_{\alpha -1}\left(\frac{x}{\varepsilon},t\right) \hfill  \text{in} \   \ \Omega^{(0)}_{\varepsilon, \gamma},
\\[6pt]
\chi_{\ell_0}\big(\frac{y^{(i)}_1}{\varepsilon^\gamma}\big) \Big( w_0^{(i)}(y^{(i)}_1,t) + \varepsilon^{\alpha -1} w_{\alpha -1}^{(i)}(y^{(i)}_1,t)\Big)
\\[2pt]
+\Big(1- \chi_{\ell_0}\big(\frac{y^{(i)}_1}{\varepsilon^\gamma}\big)\Big)
\Big(N_0\big(\frac{\mathbb{A}_i^{-1}y^{(i)}}{\varepsilon},t\big) + \varepsilon^{\alpha -1} N_{\alpha -1}\big(\frac{\mathbb{A}_i^{-1}y^{(i)}}{\varepsilon},t\big) \Big) \quad \hfill \text{in} \ \

\Omega^{(i)}_{\varepsilon,2\ell_0,3\ell_0,\gamma },
\\
\hfill  i\in\{1,\ldots,M\},
  \end{array}
\right.
\end{equation*}
directly from \eqref{max_2}.  As a result, we have the statement.

\begin{corollary}
  There exist constants $C_0>0$ and $\varepsilon_0>0$ such that for all $\varepsilon\in(0, \varepsilon_0)$ the  estimate
 \begin{equation}\label{main_2}
   \max_{\overline{\Omega_\varepsilon}\times [0, T]} |\mathfrak{A}_{0,\alpha, \varepsilon} - u_\varepsilon |  \le C_0 \, \varepsilon^{{\gamma}}
 \end{equation}
holds, where $\gamma$ is a fixed number from the interval $(\frac23, 1)$ such that $\gamma > \alpha -1.$
\end{corollary}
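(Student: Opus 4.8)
The plan is to obtain \eqref{main_2} directly from Theorem \ref{Th_2} by a triangle‑inequality argument, the only new work being an estimate of the difference between the full approximation $\mathfrak{A}^\alpha_\varepsilon$ and its truncation $\mathfrak{A}_{0,\alpha, \varepsilon}$. Since
\[
\mathfrak{A}_{0,\alpha, \varepsilon} - u_\varepsilon = \big(\mathfrak{A}_{0,\alpha, \varepsilon} - \mathfrak{A}^\alpha_\varepsilon\big) + \big(\mathfrak{A}^\alpha_\varepsilon - u_\varepsilon\big),
\]
Theorem \ref{Th_2} gives $\max_{\overline{\Omega_\varepsilon}\times[0,T]}|\mathfrak{A}_{0,\alpha, \varepsilon} - u_\varepsilon| \le \max_{\overline{\Omega_\varepsilon}\times[0,T]}|\mathfrak{A}_{0,\alpha, \varepsilon} - \mathfrak{A}^\alpha_\varepsilon| + C_0(T)\,\varepsilon^{\gamma}$, and it remains to prove that $\max_{\overline{\Omega_\varepsilon}\times[0,T]}|\mathfrak{A}^\alpha_\varepsilon - \mathfrak{A}_{0,\alpha, \varepsilon}| \le C\,\varepsilon$ with $C$ independent of $\varepsilon$.

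First I would identify this difference term by term. By construction $\mathfrak{A}_{0,\alpha, \varepsilon}$ is obtained from $\mathfrak{A}^\alpha_\varepsilon$ (assembled from the ansatzes \eqref{regul_1}--\eqref{junc+1} through the gluing procedure of \S\ref{Sec:justification}) by retaining only the coefficients carrying the two lowest powers $\varepsilon^{0}$ and $\varepsilon^{\alpha-1}$ of the scale $\beth = \{0,\alpha-1,1,2\alpha-2,\alpha,2,2\alpha-1,\alpha+1\}$; the same cut‑off factors $\chi_{\ell_0}$ and $\chi^{(i)}_\delta$ enter both functions in the matching zones $\Omega^{(i)}_{\varepsilon,2\ell_0,3\ell_0,\gamma }$ and cancel there. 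Hence $\mathfrak{A}^\alpha_\varepsilon - \mathfrak{A}_{0,\alpha, \varepsilon}$ is a finite sum of contributions of the form $\varepsilon^{\flat}$ times a regular coefficient $w^{(i)}_\flat$, a cross‑sectional corrector $u^{(i)}_\flat$, a boundary‑layer corrector $\Pi^{(i)}_\flat$, or a node‑layer corrector $N_\flat$ (each possibly multiplied by a cut‑off), with $\flat$ ranging over $\beth\setminus\{0,\alpha-1\} = \{1,2\alpha-2,\alpha,2,2\alpha-1,\alpha+1\}$, whose smallest element is $\flat=1$ precisely because $\alpha\in(\tfrac32,2)$ forces $2\alpha-2>1$. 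Each such contribution is bounded uniformly in $\varepsilon$ on $\overline{\Omega_\varepsilon}\times[0,T]$: the $w^{(i)}_\flat$ are classical solutions of the linear transport problems of this section and, under the smoothness hypotheses of Remark \ref{Rem_4_2}, together with the $u^{(i)}_\flat$ are uniformly bounded; the $\Pi^{(i)}_\flat$ decay exponentially (cf.\ \eqref{exp_decay}) and the $N_\flat$ stabilize at infinity to $w^{(i)}_\flat(0,t)$ (cf.\ \eqref{rem_exp-decrease+alpha-1}, \eqref{expdecrease_alpha}), so after multiplication by the localizing cut‑offs they are uniformly bounded as well. Multiplying by the prefactors $\varepsilon^{\flat}$ with $\flat\ge 1$ and summing over the finitely many indices yields $\max_{\overline{\Omega_\varepsilon}\times[0,T]}|\mathfrak{A}^\alpha_\varepsilon - \mathfrak{A}_{0,\alpha, \varepsilon}| \le C\,\varepsilon$. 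Combining with Theorem \ref{Th_2} gives $\max_{\overline{\Omega_\varepsilon}\times[0,T]}|\mathfrak{A}_{0,\alpha, \varepsilon} - u_\varepsilon| \le C_0(T)\,\varepsilon^{\gamma} + C\,\varepsilon$; choosing $\gamma\in(\tfrac23,1)$ with $\gamma>\alpha-1$ — admissible since $\alpha-1<1$ and hence $(\tfrac23,1)\cap(\alpha-1,1)\neq\varnothing$ — and using $\varepsilon = \varepsilon^{\gamma}\,\varepsilon^{1-\gamma} = o(\varepsilon^{\gamma})$ as $\gamma<1$, the term $C\varepsilon$ is absorbed and \eqref{main_2} follows for $\varepsilon$ small enough.

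The argument is essentially bookkeeping, so the main point that needs care is the ordering of the asymptotic scale $\beth$: one must check that no coefficient discarded in passing from $\mathfrak{A}^\alpha_\varepsilon$ to $\mathfrak{A}_{0,\alpha, \varepsilon}$ carries a power of $\varepsilon$ smaller than $1$, which is exactly where the restriction $\alpha>\tfrac32$ (equivalently $2\alpha-2>1$) enters; the threshold $\gamma>\alpha-1$ is then what makes the retained correction $\varepsilon^{\alpha-1}w^{(i)}_{\alpha-1}$ genuinely needed for $\varepsilon^{\gamma}$‑accuracy. For $\alpha\in(1,\tfrac32]$ the scale must be refined as in Remark \ref{Rem_4_1}, in which case one keeps in $\mathfrak{A}_{0,\alpha, \varepsilon}$ all coefficients with index $\flat<1$ and repeats verbatim the same estimate.
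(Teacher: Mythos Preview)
Your proof is correct and follows essentially the same approach as the paper: the paper states that since $\gamma>\alpha-1$ can be assumed, the terms of order $\mathcal{O}(\varepsilon)$ in $\mathfrak{A}^\alpha_\varepsilon$ are redundant and the estimate for $\mathfrak{A}_{0,\alpha,\varepsilon}$ follows ``directly from \eqref{max_2}''; you have simply made this triangle-inequality argument explicit, carefully checking that every discarded index $\flat\in\beth\setminus\{0,\alpha-1\}$ satisfies $\flat\ge 1$ (which is where $\alpha>\tfrac32$ enters via $2\alpha-2>1$) and that the corresponding coefficients are uniformly bounded.
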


Taking into account the order of the residuals in the relations of Lemma~\ref{Prop-4-1} and repeating the proof of Lemma~\ref{apriory_estimate}, we deduce the lemma.

\begin{lemma}\label{apriory_estimate+alpha} Let assumptions of Theorem \ref{Th_2} and the inequalities \eqref{zero_2} are satisfied. Then  the estimate
\begin{equation}\label{app_estimate+}
\tfrac{1}{\sqrt{|\Omega_\varepsilon|}}\,\| {\nabla_x \mathfrak{A}^\alpha_\varepsilon - \nabla_x u_\varepsilon}\|_{L^2(\Omega_\varepsilon\times (0, T))} \le
 C_2\, \varepsilon^{\frac{\alpha+ \gamma }{2} - 1}
  \end{equation}
holds,  where  $|\Omega_\varepsilon|$ is  the Lebesque measure of $\Omega_\varepsilon,$
$\gamma$ is a fixed number from  $(\frac23, 1).$
\end{lemma}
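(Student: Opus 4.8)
The plan is to repeat the energy argument from the proof of Theorem~\ref{apriory_estimate}, now feeding in the residual orders supplied by Lemma~\ref{Prop-4-1} in place of those of Lemma~\ref{Prop-3-1}. Set $U_\varepsilon:=\mathfrak{A}^\alpha_\varepsilon-u_\varepsilon$. First I would multiply the difference relations of Lemma~\ref{Prop-4-1} by $U_\varepsilon$, integrate over $\Omega^{(0)}_\varepsilon$, over each thin cylinder $\Omega^{(i)}_\varepsilon$, and over $(0,\tau)$ for arbitrary $\tau\in(0,T)$, and integrate by parts using \eqref{lap_1}--\eqref{grad_1} together with the initial and Dirichlet conditions. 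The diffusion term produces $\varepsilon\,\|\nabla U_\varepsilon\|^2_{L^2(\Omega_\varepsilon\times(0,\tau))}$ plus boundary integrals; applying the mean value theorem in the Robin-type conditions on $\Gamma^{(0)}_\varepsilon$ and $\Gamma^{(i)}_\varepsilon$ exactly as in \eqref{eq_4}--\eqref{eq_5}, these become integrals of $\partial_s\varphi^{(0)}_\varepsilon(\theta_0,\cdot)\,U_\varepsilon^2$, $\varepsilon^{\alpha-1}\partial_s\varphi^{(i)}_\varepsilon(\theta_i,\cdot)\,U_\varepsilon^2$, and of the boundary residuals $\varepsilon^{2\alpha-2}\Phi^{(0)}_\varepsilon\,U_\varepsilon$ and $\varepsilon^{\alpha}\Phi^{(i)}_\varepsilon\,U_\varepsilon$.

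Next I would exploit the sign structure and the incompressibility. By assumption \eqref{zero_2} the terms $\varepsilon^{\alpha-1}\int_0^\tau\!\int_{\Gamma^{(i)}_\varepsilon}\partial_s\varphi^{(i)}_\varepsilon(\theta_i,y^{(i)},t)\,U_\varepsilon^2\,dS\,dt$ are nonnegative and are kept on the left next to $\varepsilon\|\nabla U_\varepsilon\|^2_{L^2}$; the analogous term on $\Gamma^{(0)}_\varepsilon$ has no definite sign, but by the pointwise bound \eqref{max_2} and $|\Gamma^{(0)}_\varepsilon|\asymp\varepsilon^2$ it is of order $\varepsilon^{1+\alpha+2\gamma}$, hence harmless on the right. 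For the convective terms I would reason as for \eqref{incom}: incompressibility of $\overrightarrow{V_\varepsilon}^{(0)}$ in $\Omega^{(0)}_\varepsilon$, flux matching across the interfaces $\Upsilon^{(i)}_\varepsilon(\varepsilon\ell_0)$, and the Dirichlet condition $U_\varepsilon|_{y^{(i)}_1=\ell_i}=0$ collapse the longitudinal part into $-\tfrac12\sum_i\int_0^\tau\!\int_{\Omega^{(i)}_\varepsilon}\partial_{y^{(i)}_1}v^{(i)}_1\,U_\varepsilon^2$, which by \eqref{max_2} and $|\Omega_\varepsilon|\asymp\varepsilon^2$ is of order $\varepsilon^{2+2\gamma}$; the transverse part $\varepsilon\sum_i\int_0^\tau\!\int_{\Omega^{(i)}_\varepsilon}U_\varepsilon\,\overline{V}^{(i)}_\varepsilon\cdot\nabla_{\overline{y}^{(i)}_1}U_\varepsilon$, with $\overline{V}^{(i)}_\varepsilon$ from \eqref{V_i}, is bounded via \eqref{max_2} by $C\varepsilon^{2+\gamma}\|\nabla U_\varepsilon\|_{L^2}$ and absorbed into $\varepsilon\|\nabla U_\varepsilon\|^2_{L^2}$ by Young's inequality with a weight of order $\varepsilon$.

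The decisive step is the estimation of the remaining source and boundary residual terms. Using \eqref{max_2}, the bounds of Lemma~\ref{Prop-4-1}, and the orders $|\Omega^{(0)}_\varepsilon|\asymp\varepsilon^3$, $|\Omega^{(i)}_\varepsilon|\asymp\varepsilon^2$, $|\Gamma^{(0)}_\varepsilon|\asymp\varepsilon^2$, $|\Gamma^{(i)}_\varepsilon|\asymp\varepsilon$, one finds that the $\Gamma^{(i)}_\varepsilon$-boundary residual gives $\varepsilon^{\alpha}\int_0^\tau\!\int_{\Gamma^{(i)}_\varepsilon}\Phi^{(i)}_\varepsilon\,U_\varepsilon\,dS\,dt=O(\varepsilon^{1+\alpha+\gamma})$, while the node residual $\varepsilon\mathcal{R}^{(0)}_\varepsilon$, the near-node residual $\varepsilon^\gamma\mathcal{R}^{(i)}_{\varepsilon,\ell_0}$, the bulk residual $\varepsilon^{2\alpha-2}\mathcal{R}^{(i)}_\varepsilon$, and the $\Gamma^{(0)}_\varepsilon$-residual $\varepsilon^{2\alpha-2}\Phi^{(0)}_\varepsilon$ contribute the strictly higher orders $\varepsilon^{4+\gamma}$, $\varepsilon^{2+2\gamma}$, $\varepsilon^{2\alpha+\gamma}$, $\varepsilon^{2\alpha+\gamma}$ respectively, all beaten by $\varepsilon^{1+\alpha+\gamma}$ once $\alpha\in(\tfrac32,2)$ and $\gamma>\alpha-1$. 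Collecting everything one arrives at $\varepsilon\,\|\nabla U_\varepsilon\|^2_{L^2(\Omega_\varepsilon\times(0,\tau))}\le C\,\varepsilon^{1+\alpha+\gamma}$; dividing by $\varepsilon$, taking square roots, dividing by $\sqrt{|\Omega_\varepsilon|}\asymp\varepsilon$, and letting $\tau\to T$ yields \eqref{app_estimate+}.

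I expect the main obstacle to be precisely this last bookkeeping: one has to check, over the whole admissible range of the parameters, that the boundary residual $\varepsilon^{\alpha}\Phi^{(i)}_\varepsilon$ is the dominant term — this is what degrades the rate from the $\alpha=1$ rate $\gamma-\tfrac12$ of Theorem~\ref{apriory_estimate} down to $\tfrac{\alpha+\gamma}{2}-1$ — and to confirm that assumption \eqref{zero_2} is indispensable, since otherwise the positive boundary term $\varepsilon^{\alpha-1}\partial_s\varphi^{(i)}_\varepsilon(\theta_i,\cdot)\,U_\varepsilon^2$ could not be discarded and would by itself force the worse exponent $\tfrac{\alpha-1}{2}+\gamma-1$.
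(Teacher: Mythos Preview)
Your proposal is correct and takes essentially the same approach as the paper's own proof, which consists of the single sentence ``Taking into account the order of the residuals in the relations of Lemma~\ref{Prop-4-1} and repeating the proof of Lemma~\ref{apriory_estimate}, we deduce the lemma.'' In fact you supply considerably more of the bookkeeping than the paper does, including the correct identification of the $\Gamma^{(i)}_\varepsilon$-residual $\varepsilon^\alpha\Phi^{(i)}_\varepsilon$ as the leading contribution under the constraint $\gamma>\alpha-1$, which the paper imposes just before the lemma by writing ``We can regard that $\gamma > \alpha -1$''.
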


\begin{remark}\label{Rem_4_3} Since $\alpha \in (\frac32 , 2)$ and $\gamma \in (\frac23, 1),$  the number
$\frac{\alpha+ \gamma }{2} - 1$ is positive.
For other values of the parameter $\alpha \in (1, \frac32)$, the ansatz approximation will contain more terms, but  the first two terms will always remain the same (see Remark~\ref{Rem_4_1}), and in each case the residuals must be recalculated.
\end{remark}

It follows from \eqref{app_estimate+} that terms of the order $\mathcal{O}(\varepsilon^2)$  are redundant in the approximation $\mathfrak{A}^\alpha_\varepsilon$. Denote by $\mathfrak{A}^\alpha_{\sharp,\varepsilon}$ the approximation $\mathfrak{A}^\alpha_\varepsilon$  without the terms $\varepsilon^\flat \{u_\flat^{(i)}\}_{i=1}^M,$ $\flat \in  \{ 2, 2 \alpha -1, \alpha +1\}$ and  $\varepsilon^2 \{\Pi_2^{(i)}\}_{i=1}^M.$
\begin{corollary}
  There exist constants $C_3>0$ and $\varepsilon_0>0$ such that for all $\varepsilon\in(0, \varepsilon_0)$ we have \begin{equation}\label{app_estimate_2}
\tfrac{1}{\sqrt{|\Omega_\varepsilon|}}\,\| \nabla_x \mathfrak{A}^\alpha_{\sharp,\varepsilon} - \nabla_x u_\varepsilon\|_{L^2(\Omega_\varepsilon\times (0, T))} \le
 C_3\, \varepsilon^{\frac{\alpha+ \gamma }{2} - 1}.
  \end{equation}
\end{corollary}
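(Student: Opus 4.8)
The plan is to read off \eqref{app_estimate_2} from the energy estimate \eqref{app_estimate+} of Lemma~\ref{apriory_estimate+alpha} via a triangle inequality, checking that the terms deleted from $\mathfrak{A}^\alpha_\varepsilon$ to form $\mathfrak{A}^\alpha_{\sharp,\varepsilon}$ make, after the customary rescaling by $\sqrt{|\Omega_\varepsilon|}$, a contribution to the $L^2$-norm of the gradient that is no larger than $\varepsilon^{\frac{\alpha+\gamma}{2}-1}$. Concretely, I would first write
\[
\nabla_x \mathfrak{A}^\alpha_{\sharp,\varepsilon} - \nabla_x u_\varepsilon = \big(\nabla_x \mathfrak{A}^\alpha_{\varepsilon} - \nabla_x u_\varepsilon\big) - \nabla_x\big(\mathfrak{A}^\alpha_{\varepsilon} - \mathfrak{A}^\alpha_{\sharp,\varepsilon}\big),
\]
estimate the first summand on the right by \eqref{app_estimate+}, and reduce everything to a bound for $\tfrac{1}{\sqrt{|\Omega_\varepsilon|}}\,\big\|\nabla_x(\mathfrak{A}^\alpha_\varepsilon - \mathfrak{A}^\alpha_{\sharp,\varepsilon})\big\|_{L^2(\Omega_\varepsilon\times(0,T))}$.

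Next I would examine $\mathfrak{A}^\alpha_\varepsilon - \mathfrak{A}^\alpha_{\sharp,\varepsilon}$ term by term. By construction it is, up to the fixed cut-off factors $\chi_\delta^{(i)}$ and $\chi_{\ell_0}(\cdot/\varepsilon^\gamma)$ already occurring in \eqref{first_app}, the sum of the regular corrections $\varepsilon^\flat\, u_\flat^{(i)}\big(y^{(i)}_1,\overline{y}^{(i)}_1/\varepsilon,t\big)$ with $\flat\in\{2,\,2\alpha-1,\,\alpha+1\}$ and of the boundary-layer correction $\chi_\delta^{(i)}\,\varepsilon^2\,\Pi_2^{(i)}\big((\ell_i-y^{(i)}_1)/\varepsilon,\,\overline{y}^{(i)}_1/\varepsilon,\,t\big)$. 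Differentiating any such term in $x$, each fast (transverse or boundary-layer) variable produces a factor $\varepsilon^{-1}$ by the chain rule, whereas the $y^{(i)}_1$- and $t$-derivatives, as well as the cut-off derivatives, stay bounded on the sets where they matter; hence $|\nabla_x(\varepsilon^\flat u_\flat^{(i)})|=\mathcal{O}(\varepsilon^{\flat-1})$ uniformly, and $|\nabla_x(\chi_\delta^{(i)}\varepsilon^2\Pi_2^{(i)})|=\mathcal{O}(\varepsilon)$ with exponential decay away from $\Upsilon_\varepsilon^{(i)}(\ell_i)$ by \eqref{exp_decay}. Squaring, integrating over $\Omega_\varepsilon\times(0,T)$ (where $|\Omega_\varepsilon|$ is of order $\varepsilon^2$, with one extra power of $\varepsilon$ in the $y^{(i)}_1$-integration for the exponentially localized $\Pi_2^{(i)}$), and dividing by $\sqrt{|\Omega_\varepsilon|}$, I expect to arrive at
\[
\tfrac{1}{\sqrt{|\Omega_\varepsilon|}}\,\big\|\nabla_x(\mathfrak{A}^\alpha_\varepsilon-\mathfrak{A}^\alpha_{\sharp,\varepsilon})\big\|_{L^2(\Omega_\varepsilon\times(0,T))}\le C\big(\varepsilon+\varepsilon^{2\alpha-2}+\varepsilon^{\alpha}+\varepsilon^{3/2}\big).
\]

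Finally I would observe that all four exponents $1,\ 2\alpha-2,\ \alpha,\ \tfrac32$ are $\ge 1$ for $\alpha\in(\tfrac32,2)$, and that $1\ge\tfrac{\alpha+\gamma}{2}-1$ holds because $\alpha+\gamma<3<4$; hence the whole right-hand side is $\mathcal{O}(\varepsilon)=\mathcal{O}(\varepsilon^{\frac{\alpha+\gamma}{2}-1})$, which together with \eqref{app_estimate+} yields \eqref{app_estimate_2}. I expect the only point requiring genuine care to be the bookkeeping of the $\varepsilon^{-1}$ chain-rule factors coming from the fast variables against the measure of the (sometimes $\varepsilon$-dependent) supports of the individual terms and of the cut-off derivatives, so that each discarded contribution is unambiguously dominated by $\varepsilon^{\frac{\alpha+\gamma}{2}-1}$; there is no analytic difficulty beyond what is already contained in Lemma~\ref{apriory_estimate+alpha}.
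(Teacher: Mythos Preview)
Your proposal is correct and follows exactly the approach the paper has in mind: the corollary is stated without proof, immediately after the remark that ``terms of the order $\mathcal{O}(\varepsilon^2)$ are redundant'' in $\mathfrak{A}^\alpha_\varepsilon$, so the intended argument is precisely your triangle-inequality splitting together with the observation that the discarded terms contribute at most $\mathcal{O}(\varepsilon)$ after rescaling. Your bookkeeping of the chain-rule factors and the resulting exponent comparison $1\ge\tfrac{\alpha+\gamma}{2}-1$ is accurate; note also that, by the analogue of Remark~\ref{r_2_1}, the coefficients $u_\flat^{(i)}$ vanish near the ends of $[0,\ell_i]$, so the cut-off $\chi_{\ell_0}(\cdot/\varepsilon^\gamma)$ creates no additional contribution.
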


\section{Conclusions}\label{conclusions}

 As we noted in the introduction, there is a big problem for conservation laws on networks
how to choose gluing conditions at the vertex of a graph when there are $n$ incoming flows and $m$ outgoing ones.
Other conditions may be added to the gluing conditions, depending on the problem being modeled (they are determined on the basis of physical, biological, engineering  or other considerations).
Since our work offers a general mathematical approach, in our opinion, the choice of the appropriate component of the weighted incoming concentration average in the boundary condition of the problem \eqref{limit_prob_1_5} is optimal from both a mathematical and a physical point of view. In addition, such a choice leads to the fulfillment of the generally recognized condition of the mass conservation at the vertex (in our case the condition \eqref{cong_cond}) .

\smallskip

$\bullet$ Many physical processes, especially in chemistry and medicine, have monotonous nature.
For instance, the function
$$
\phi(s) = \frac{\lambda \, s^p}{1+ \mu \, s^p} \quad (\lambda, \ \mu > 0),
$$
where $p$ is an odd number, satisfies the monotonic condition \eqref{zero_2}. If $p=1,$ then this function corresponds to the Michaelis-Menten hypothesis in biochemical reactions and to the Langmuir kinetics adsorption models \cite{Conca,Mel_IJB-2019,Pao}. Therefore, the monotonic conditions \eqref{zero_2} for the non-linear terms $\{\varphi^{(i)}\}_{i=1}^M$, which are necessary to prove the energy estimates \eqref{app_estimate}, \eqref{app_estimate_1}  and \eqref{app_estimate_2}, are not a strong assumption.

\smallskip

$\bullet$ The results obtained  show  that the asymptotic behavior of the solution essentially depends on the parameter $\alpha$
characterizing the intensity of processes at the boundary of a network. If $\alpha \ge 2,$ then we can ignore the right-hand sides
$\{\varepsilon^{\alpha-1} \varphi^{(i)}\}_{i=0}^M$ in the boundary conditions of the problem~\eqref{probl_rewrite}.

When $\alpha \in (1, 2),$ we see its influence in the second terms of the asymptotics (see \eqref{regul_1} - \eqref{junc+1}),
and  $\{\varepsilon^{\alpha-1} \varphi^{(i)}\}_{i=1}^M$ are transformed into the the right-hand sides of the problem \eqref{limit_prob_alpha-1}. The influence of the node in the asymptotics is observed through the nodal solutions, and the impact of the function $\varepsilon^{\alpha-1} \varphi^{(0)}$ is sighted only in the gluing condition of the problem \eqref{prob_w_alpha} for
$\{w^{(i)}_{\alpha}\}_{i=1}^M$ (see \eqref{d_alpha}) and in the boundary condition on $\Gamma_0$ of the problem \eqref{tilda_N_alpha}
for $N_\alpha.$ The limit problem \eqref{limit_prob_>1} on the graph, the solution of which is the main term of the asymptotics, does not depend on $\{\varphi^{(i)}\}_{i=1}^M.$
What is remarkable is the linearity of all problems, with the help of which the terms of asymptotics are determined.

If $\alpha =1,$ then limit problem \eqref{limit_prob_1}-\eqref{limit_prob_2} on the graph is semilinear,
 and in it we already see the effect of the functions $\{\varphi^{(i)}\}_{i=1}^M.$ The impact of $\varphi^{(0)}$ is observed  in the gluing condition of the problem \eqref{prob_w_1} for $\{w^{(i)}_{1}\}_{i=1}^M$  and in the boundary condition on $\Gamma_0$ of the problem \eqref{tilda_N_1_prob}.

\smallskip

$\bullet$ The impact of physical processes on the lateral boundary of the network becomes dominant in the case of $\alpha \in (0, 1),$ and it is natural to expect that they can provoke cardinal changes in the entire process in the network.
Indeed, since the functions $\{\varphi^{(i)}\}_{i=1}^M$ are bounded, the asymptotics for the solution $u_\varepsilon$  must contains terms $\varepsilon^{\alpha-1} w_{\alpha-1}^{(i)}(y^{(i)}_1,t)$ and $\varepsilon^\alpha  u_\alpha^{(i)} $ so that have the same order as the right-hand side in the boundary condition on $\Gamma^{(i)}_\varepsilon$ of the problem~\eqref{probl_rewrite}. But $\varepsilon^{\alpha-1} w_{\alpha-1}^{(i)}$
is unbounded  as $\varepsilon \to 0.$ Thus, we must know the asymptotics of  $\varphi^{(i)}$ at infinity and made additional assumptions for
$\varphi^{(i)}.$ The study in this case cannot be treated by any simple modifications of the approaches used in the previous cases, and it is postponed to a planned forthcoming paper by the authors.

\appendix %\section{Appendix}
\section{Elliptic boundary value problems\label{appendixellip}}
To state existence results for problems like \eqref{tilda_N_0_prob} and \eqref{tilda_N_1_prob}, we summarize results which have been  adapted for our case  in \cite{Mel-Klev_AA-2022} from the
general approach proposed in \cite[\S 5]{Koz-Maz-Ros_97}, \cite[\S 2.2]{Ole_book_1996} and \cite[\S 3]{Naz99}.
Let us define for $\beta >0 $ the weighted Sobolev space $\mathcal{H}_\beta$ as the set of all functions from $H^1(\Xi)$ with  finite norm
$$
\|u\|_{\beta} := \bigg(\int_{\Xi^{(0)}} \Big(|\nabla_\xi u|^2 + |u|^2 \Big) d\xi + \sum_{i=1}^{M} \int_{\Xi^{(i)}} \varrho(\xi^{(i)})\Big(|\nabla_{\xi^{(i)}} u|^2 + |u|^2 \Big) d\xi^{(i)} \bigg)^{1/2}.
$$
Here $\varrho$ is a smooth positive function such that
$$
\varrho = \left\{
                 \begin{array}{ll}
                   1, & \ \ \xi \in \Xi^{(0)},
\\[2pt]
                   e^{\beta \xi^{(i)}_1} , &\ \  \xi^{(i)}_1 \ge 2 \ell_0, \ \ \xi^{(i)} \in \Xi^{(i)}, \ i\in\{1,\ldots,M\}.
                 \end{array}
               \right.
$$

Consider the problem
\begin{equation}\label{tilda_N}
\left\{\begin{array}{rclll}
-   \Delta_\xi \widetilde{N} +
  \overrightarrow{V} \boldsymbol{\cdot} \nabla_\xi \widetilde{N} & = & F^{(0)}, &  \xi \in\Xi^{(0)},&
\\[1mm]
- \partial_{\boldsymbol{\nu}_\xi}  \widetilde{N} &=& \Psi^{(0)}, &   \xi \in \Gamma_0,&
\\[1mm]
 -   \Delta_{\xi^{(i)}} \widetilde{N}  +
  \mathrm{v}_i \, \partial_{\xi^{(i)}_1}\widetilde{N} & = & F^{(i)}, &
     \xi^{(i)} \in\Xi^{(i)},&  i\in \{1,\ldots,M\},
\\[1mm]
\partial_{\bar{\nu}^{(i)}} \widetilde{N}  &=&  0, &
   \xi^{(i)} \in \Gamma_i, & i\in \{1,\ldots,M\}.
 \end{array}\right.
\end{equation}
Due to \eqref{lap_1} -- \eqref{grad_1} and the assumptions made in Section~\ref{subsec_V}, we can define a weak solution to \eqref{tilda_N} as follows. A function $\widetilde{N}\in \mathcal{H}_\beta$ is a weak solution to the problem~\eqref{tilda_N} if the identity
$$
\int_{\Xi^{(0)}} \Big(\nabla_\xi \widetilde{N}  - \widetilde{N} \,  \overrightarrow{V}\Big) \boldsymbol{\cdot} \nabla_\xi \phi \, d\xi
+ \sum_{i=1}^{M}\int_{\Xi^{(i)}}\Big(\nabla_{\xi^{(i)}} \widetilde{N} \boldsymbol{\cdot} \nabla_{\xi^{(i)}} \phi -  \mathrm{v}_i  \, \widetilde{N} \, \partial_{\xi^{(i)}_1} \phi \Big)  d\xi^{(i)}
$$
$$
= \int_{\Xi^{(0)}} F^{(0)}  \phi\, d\xi + \sum_{i=1}^{M}\int_{\Xi^{(i)}} F^{(i)}  \phi\, d\xi^{(i)} - \int_{\Gamma_0} \Psi^{(0)}  \phi\, d\sigma_\xi
$$
holds for any function $\phi \in \mathcal{H}_{-\beta}.$
Based on results of \cite[Lemma 3.1]{Mel-Klev_AA-2022} we have the following  statement.
\begin{proposition}\label{Prop-2-1} Let $F^{(0)} \in L^2(\Xi^{(0)}),$ $\Psi^{(0)} \in L^2(\Gamma_0),$ and for all $i\in \{1,\ldots,M\}$
$$
  \int_{\Xi^{(i)}} e^{\beta \xi^{(i)}_1} \, \big(F^{(i)}(\xi^{(i)})\big)^2 \,  d\xi^{(i)} < +\infty \qquad (\beta > 0).
$$

Then the problem \eqref{tilda_N} has a unique  weak solution in the space $\mathcal{H}_\beta$ if and only if
the following equality is satisfied
\begin{equation}\label{cong_cond_g}
 \int_{\Xi^{(0)}} F^{(0)} \, d\xi + \sum_{i=1}^{M}\int_{\Xi^{(i)}} F^{(i)}(\xi^{(i)}) \,  d\xi^{(i)} = \int_{\Gamma_0} \Psi^{(0)}(\xi) \,  d\sigma_\xi .
\end{equation}
\end{proposition}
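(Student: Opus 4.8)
The plan is to read \eqref{tilda_N} as the operator equation $\mathcal{L}_\beta\widetilde{N}=\mathcal{F}$, where $\mathcal{L}_\beta\colon\mathcal{H}_\beta\to(\mathcal{H}_{-\beta})^{*}$ is the bounded operator generated by the bilinear form on the left-hand side of the weak identity and $\mathcal{F}\in(\mathcal{H}_{-\beta})^{*}$ is the functional $\phi\mapsto\int_{\Xi^{(0)}}F^{(0)}\phi\,d\xi+\sum_{i=1}^{M}\int_{\Xi^{(i)}}F^{(i)}\phi\,d\xi^{(i)}-\int_{\Gamma_0}\Psi^{(0)}\phi\,d\sigma_\xi$; the integrability assumptions on $F^{(0)},F^{(i)},\Psi^{(0)}$ are precisely what makes $\mathcal{F}$ continuous on $\mathcal{H}_{-\beta}$ (a weighted Cauchy--Schwarz inequality on the cylindrical ends, plus the trace theorem on $\Gamma_0$). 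Since the spaces $\mathcal{H}_\beta$ and $\mathcal{H}_{-\beta}$ are different, Lax--Milgram is not available, and one has to use the Fredholm theory for elliptic problems in domains with cylindrical outlets, in the form adapted in \cite{Mel-Klev_AA-2022} from \cite{Koz-Maz-Ros_97,Ole_book_1996,Naz99}. On each end $\Xi^{(i)}$ the transverse operator is the Neumann Laplacian on the disk $\Upsilon^{(i)}$ and the longitudinal part is $-\partial^2_{\xi_1}+\mathrm{v}_i\partial_{\xi_1}$, so the associated operator pencil has the two exponents $\lambda=0$ (the constant transverse mode) and $\lambda=\mathrm{v}_i$, while all remaining exponents have strictly larger modulus. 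Because $|\mathrm{v}_i|$ is bounded away from zero by \eqref{V_2}, for $\beta>0$ small enough the critical strip $\{|\mathrm{Re}\,\lambda|\le\beta/2\}$ contains only $\lambda=0$; then $\mathcal{L}_\beta$ is Fredholm and its cokernel is realized by the solutions in $\mathcal{H}_{-\beta}$ of the homogeneous formally adjoint problem.

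The necessity of \eqref{cong_cond_g} is immediate and needs no machinery: the constant function $\phi\equiv1$ lies in $\mathcal{H}_{-\beta}$ (its norm is finite because $\varrho^{-1}\sim e^{-\beta\xi_1^{(i)}}$ is integrable over each semi-infinite cylinder), and inserting it into the weak identity annihilates the whole left-hand side, leaving exactly \eqref{cong_cond_g}.

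For the converse I would first establish $\ker\mathcal{L}_\beta=\{0\}$ by an energy estimate. If $\widetilde{N}\in\mathcal{H}_\beta$ solves the homogeneous version of \eqref{tilda_N}, then $\widetilde{N}$ also belongs to $\mathcal{H}_{-\beta}$ and may be used as a test function; taking $\phi=\widetilde{N}$ and integrating the convective terms by parts — legitimate because membership in $\mathcal{H}_\beta$ forces $\widetilde{N}$ to decay exponentially along the ends — and using $\mathrm{div}_\xi\overrightarrow{V}=0$ in $\Xi^{(0)}$, $\overrightarrow{V}\cdot\boldsymbol{\nu}_\xi=0$ on $\Gamma_0$, and the fact that the interface contributions $\tfrac12\mathrm{v}_i\int_{\Upsilon^{(i)}}\widetilde{N}^2$ produced at each junction $\{\xi^{(i)}_1=\ell_0\}$ by $\Xi^{(0)}$ and by $\Xi^{(i)}$ are equal and opposite, one is left with $\int_{\Xi}|\nabla\widetilde{N}|^2=0$; hence $\widetilde{N}$ is constant, and the decay forces $\widetilde{N}\equiv0$. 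Consequently $\mathcal{L}_\beta\widetilde{N}=\mathcal{F}$ is solvable precisely when $\mathcal{F}$ vanishes on $\mathrm{coker}\,\mathcal{L}_\beta$. One then checks directly that $v\equiv1$ solves the homogeneous formally adjoint problem — which, by $\mathrm{div}_\xi\overrightarrow{V}=0$ and $\overrightarrow{V}\cdot\boldsymbol{\nu}_\xi=0$ on $\Gamma_0$, is again a Neumann convection--diffusion problem, obtained from \eqref{tilda_N} by replacing $\overrightarrow{V}$ with $-\overrightarrow{V}$ and the data by zero — and that $v\equiv1\in\mathcal{H}_{-\beta}$; the cited theory gives $\dim\mathrm{coker}\,\mathcal{L}_\beta=1$, and since $v\equiv1$ is a nonzero element of it, $\mathrm{coker}\,\mathcal{L}_\beta=\mathrm{span}\{v\equiv1\}$. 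Pairing $\mathcal{F}$ with $v\equiv1$ is exactly the statement that the two sides of \eqref{cong_cond_g} coincide, so \eqref{cong_cond_g} is the solvability condition, and uniqueness in $\mathcal{H}_\beta$ has already been shown.

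The genuine obstacle in this scheme is the Fredholm step together with the sharp description of the cokernel: it rests on the Kondratiev-type asymptotic analysis of the operator pencil at the cylindrical ends and on checking that the chosen range of $\beta$ lets only the constant mode into the critical strip, so that the cokernel is exactly one-dimensional and spanned by the single adjoint solution $v\equiv1$; this is the part genuinely imported from \cite{Mel-Klev_AA-2022}. The remaining points — continuity of $\mathcal{F}$ on $\mathcal{H}_{-\beta}$, justifying the integrations by parts via the exponential decay, and the cancellation of the interface fluxes — are routine.
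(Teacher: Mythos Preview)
The paper does not prove this proposition; it merely states it as a consequence of \cite[Lemma~3.1]{Mel-Klev_AA-2022}, which in turn adapts the general Fredholm theory of \cite{Koz-Maz-Ros_97,Ole_book_1996,Naz99} for elliptic problems in domains with cylindrical outlets. Your proposal therefore supplies exactly the argument the paper delegates to the literature, and it is the standard one: necessity by testing with $\phi\equiv1\in\mathcal{H}_{-\beta}$, injectivity of $\mathcal{L}_\beta$ via the energy identity with the interface-flux cancellation, and sufficiency by the Fredholm alternative once the operator pencil at each end is analysed and the cokernel is identified with the span of the constant adjoint solution. The outline is correct and coincides with what the cited lemma does; the only substantive step you cannot carry out from scratch---and you say so---is the Fredholm property and the one-dimensionality of the cokernel, which genuinely requires the Kondratiev machinery from the references.
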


\section{Initial-boundary value problems for advection equations\label{appendixhyp}}
We summarize  basic wellposedness  results for the solution of
initial-boundary value problems for advection equations as needed to solve the hyperbolic limit problems.
Let us  consider as model mixed problem
\begin{equation}\label{prob_1}
 \left\{\begin{array}{rcll}
 \partial_t w + v(x,t)\, \partial_{x} w  &=& - \partial_{x}v(x,t)\, w(x,t) + \Psi(w,x,t),& (x, t) \in (0, \ell) \times (0, T) ,
    \\[2mm]
    w(0,  t) & = & q(t), & t \in [0, T],
    \\[2mm]
    w(x, 0) & = & 0, & x \in [0, \ell],
    \end{array}\right.
\end{equation}
where the given functions $v(x,t), \ (x,t)\in [0, \ell]\times [0, T],$  $\Psi(s,x,t), \ (s,x, t) \in \Bbb R \times [0, \ell] \times [0, T],$ and $q(t), \ t\in[0,T],$  are smooth  on their domains of definition,   and in addition, $q(0)=0,$ $\frac{d q}{dt}(0)=0,$ and $v >0$ for all $(x,t)\in [0, \ell]\times [0, T].$

The corresponding characteristic system for the semi-linear differential equation in \eqref{prob_1} is as follows
\begin{equation}\label{char_system}
  \frac{dx}{dt} = v(x,t), \quad \frac{dw}{dt} = - \partial_{x}v(x,t)\, w(x,t) + \Psi(w,x,t).
\end{equation}
Let $F(x,t) = c$ be a  first integral of the differential equation $x'= v(x,t)$ such that $F(0,0) = 0.$ Since $v>0,$ all solutions of this equation
starting from the points $(x,0), \ x\in [0, \ell],$ are strictly increasing.
The graph of the solution $x = \mathcal{V}(t)$ outgoing from the origin
 divides the rectangle $(0, \ell) \times (0, T)$ into two domains, namely
$$
\mathfrak{D}_1 := \{(x,t)\colon x\in (0,\ell), \ \ t \in (0, \mathcal{V}^{-1}(x))\} \quad \text{and} \quad \mathfrak{D}_2 := \big\{(0, \ell) \times (0, T)\big\} \setminus \overline{\mathfrak{D}_1},
$$
where $\mathcal{V}^{-1}$ is the inverse function to $\mathcal{V}.$
From $F(x,t) = c$ we can get a general solution $x= X(t; c).$ Clearly, that $X(t;0) = \mathcal{V}(t)$ and $X(t;F(x,t)) \equiv x.$
In addition, the inverse function $t= X^{-1}(x; c)$ satisfies relations
$$
X^{-1}(x; 0) = \mathcal{V}^{-1}(x) \quad \text{and}\quad X^{-1}(x;F(x,t)) \equiv t.
$$

By integrating over the characteristics, the problem \eqref{prob_1} is reduced to the nonlinear Volterra integral equations
\begin{equation}\label{Volt_1}
  w(x,t) = \int\limits_{0}^{t} \exp\Big(- \int_{\tau}^{t}\partial_{x}v\big( X(\mu;F(x,t)),\mu\big) \, d\mu\Big) \, \Psi\big(w(X(\tau;F(x,t)), \tau),   X(\tau;F(x,t)), \tau\big)\, d\tau
\end{equation}
in the domain $\mathfrak{D}_1,$ and
\begin{multline}\label{Volt_2}
  w(x,t) = q\big(X^{-1}(0;F(x,t))\big) \, \exp\Big(- \int_{X^{-1}(0;F(x,t))}^{t}\partial_{x}v\big( X(\tau;F(x,t)),\tau\big) \, d\tau\Big)
\\
+\int\limits_{X^{-1}(0;F(x,t))}^{t} \exp\Big(- \int_{\tau}^{t}\partial_{x}v\big( X(\mu;F(x,t)),\mu\big) \, d\mu\Big) \,  \Psi\big(w(X(\tau;F(x,t)), \tau),   X(\tau;F(x,t)), \tau\big)\, d\tau
\end{multline}
in $\mathfrak{D}_2.$ It should be noted that from \eqref{Volt_1} it follows that $w(x,0)=0,$ from \eqref{Volt_2} we get $w(0,t)=q(t),$ and  the equations  \eqref{Volt_1} and \eqref{Volt_2} coincide  on the characteristic $x = \mathcal{V}(t).$  Therefore, we can consider \eqref{Volt_1} and \eqref{Volt_2} as one Volterra integral equation in the rectangle $[0, \ell]\times [0, T].$

Applying the results of  \cite{Myshkis_1960}, we can state that  the mixed problem \eqref{prob_1} has a unique classical solution if there is a positive constant $C$ such that
\begin{equation}\label{bound_1}
  \big|\partial_s \Psi(s,x,t)\big| \le C, \quad \big|\partial^2_{ss} \Psi(s,x,t)\big| \le C, \quad
  \big|\partial^2_{sx} \Psi(s,x,t)\big| \le C,
\end{equation}
for all $(s,x, t) \in \Bbb R \times [0, \ell] \times [0, T],$ and the matching conditions are satisfied:
\begin{equation}\label{math_1}
q(0)=0, \quad \frac{d q}{dt}(0)=0, \quad \Psi(0,0,0) =0
\end{equation}

\section*{Acknowledgments}
The first author is grateful to the Alexander von Humboldt Foundation for the possibility to carry
out this research at the University of Stuttgart. The second author thanks for  funding by the  Deutsche Forschungsgemeinschaft (DFG, German Research Foundation) – Project Number 327154368 – SFB 1313.

%\bibliographystyle{siamplain}
%\bibliography{references}

\end{document}